\title{Semi-factorial nodal curves and N\'eron models of jacobians}
\newcommand{\Z}{\mathbb{Z}}
\newcommand{\N}{\mathbb{N}}
\newcommand{\Q}{\mathbb{Q}}
\newcommand{\X}{\mathcal{X}}
\newcommand{\Y}{\mathcal{Y}}
\renewcommand{\O}{\mathcal{O}}
\newcommand{\m}{\mathfrak{m}}
\renewcommand{\L}{\mathcal{L}}
\newcommand{\bmt}{\begin{pmatrix}}
\newcommand{\emt}{\end{pmatrix}}
\newcommand{\bsm}{\left(\begin{smallmatrix}}
\newcommand{\esm}{\end{smallmatrix}\right)}
\newcommand{\til}{\widetilde}
\DeclareMathOperator{\GL}{GL}
\DeclareMathOperator{\rk}{rk}
\DeclareMathOperator{\id}{id}
\DeclareMathOperator{\Spec}{Spec}
\DeclareMathOperator{\colim}{colim}
\DeclareMathOperator{\Proj}{Proj}
\DeclareMathOperator{\coker}{coker}
\DeclareMathOperator{\Sets}{\mathbf{Sets}}
\DeclareMathOperator{\Sch}{\mathbf{Sch}}
\DeclareMathOperator{\Frac}{Frac}
\DeclareMathOperator{\Pic}{Pic}
\DeclareMathOperator{\cll}{cl}
\DeclareMathOperator{\ZZ}{\mathcal Z}
\DeclareMathOperator{\Div}{Div}
\DeclareMathOperator{\divv}{div}
\DeclareMathOperator{\depth}{depth}
\newcommand{\ra}{\rightarrow}
\newcommand{\sub}{\subset}
\theoremstyle{definition}
\newtheorem{definition}{Definition}[section]
\newtheorem{remark}[definition]{Remark}
\newtheorem{example}[definition]{Example}
\newtheorem{construction}[definition]{Construction}
\theoremstyle{plain}
\newtheorem{proposition}[definition]{Proposition}
\newtheorem{lemma}[definition]{Lemma}
\newtheorem{theorem}[definition]{Theorem}
\newtheorem{corollary}[definition]{Corollary}
\theoremstyle{remark}
\renewcommand{\phi}{\varphi}
\author{Giulio Orecchia \\ email: \href{mailto:g.orecchia@math.leidenuniv.nl}{g.orecchia@math.leidenuniv.nl} \\
Mathematisch Instituut Leiden, \\
Niels Bohrweg 1, 2333CA Leiden, The Netherlands \\
 Mathematical subject classification:	14D06}
\newcounter{nootje}
\newcommand{\beq}{\begin{equation}}
\newcommand{\eeq}{\end{equation}}
\newcommand{\beqs}{\begin{equation*}}
\newcommand{\eeqs}{\end{equation*}}
\begin{document}
\maketitle
\begin{abstract}
Following P\'epin, we call a family of curves over a discrete valuation ring semi-factorial if every line bundle on the generic fibre extends to a line bundle on the total space. In the case of nodal curves with split singularities, we give a sufficient and necessary condition for semi-factoriality, in terms of combinatorics of the dual graph of the special fibre. In particular, we show that performing one blow-up with center the non-regular closed points yields a semi-factorial model of the generic fibre. 

As an application, we extend the result of Raynaud relating N\'eron models of smooth curves and Picard functors of their regular models to the case of (possibly singular) curves having a semi-factorial model.
\end{abstract}
\tableofcontents
\newpage

\section*{Introduction}

Let $S$ be the spectrum of a discrete valuation ring with fraction field $K$, and let $\X\ra S$ be a scheme over $S$. Following \citep{pepin}, we say that $\X\ra S$ is \textit{semi-factorial} if the restriction map 
$$\Pic(\X)\ra \Pic(\X_K)$$
is surjective; namely, if every line bundle on the generic fibre $\X_K$ can be extended to a line bundle on $\X$. 

We consider the case of a relative curve $\X\ra S$. In \citep{pepin}, Theorem 8.1, P\'epin proved that given a geometrically reduced curve $\X_K/K$  with ordinary singularities and a proper flat model $\X\ra S$, a semi-factorial flat model $\X'\ra S$ can be obtained after a blowing-up $\X'\ra \X$ with center in the special fibre.


The main result of this article is a necessary and sufficient condition for semi-factoriality in the case where $\X \ra S$ is a proper, flat family of nodal curves, whose special fibre has split nodes. It turns out that in this case semi-factoriality is equivalent to a certain combinatorial condition involving the dual graph of the special fibre of $\X/S$ and a labelling of its edges, which we describe now.
Let $t\in \Gamma(S,\O_S)$ be a uniformizer; every node of the special fibre is \'etale locally described by an equation of the form 
\begin{itemize}
\item[a)] $xy-t^n=0$ for some $n\geq 1$, or
\item[b)] $xy=0$ (if the node persists in the generic fibre).
\end{itemize}  
Consider the dual graph $\Gamma=(V,E)$ associated to the special fibre of $\X/S$. We label its edges by the function $l\colon E\ra \Z_{\geq 1}\cup\{\infty\}$ 
$$
l(e)=\begin{cases}
n & \mbox{ if the node corresponding to } e \mbox{ is as in case a);}\\
\infty & \mbox{ if the node corresponding to } e \mbox{ is as in case b).}
\end{cases}
$$
We say that the labelled graph $(\Gamma,l)$ is \textit{circuit-coprime} if, after contracting all edges with label $\infty$, every circuit of the graph has labels with greatest common divisor equal to $1$. In particular, if $\Gamma$ is a tree, $(\Gamma,l)$ is automatically circuit-coprime.

The following theorem is our main result:
\begin{theorem}[Theorem \ref{mainthm}] \label{Thmintro}
If the labelled graph $(\Gamma,l)$ is circuit-coprime, the curve $\X\ra S$ is semi-factorial. If moreover $\Gamma(S,\O_S)$ is strictly henselian, the converse holds as well.
\end{theorem}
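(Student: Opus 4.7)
The plan is to reduce the cokernel of $\Pic(\X)\to\Pic(\X_K)$ to a linear system attached to the labelled dual graph, and read off its solvability from the circuit-coprime condition.

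First I would analyse the local obstruction to Cartier-ness at each node. At a type-(a) node $e$ with label $n_e$, the completed local ring is $\widehat R[[x,y]]/(xy-t^{n_e})$, a normal surface singularity with divisor class group $\on{Cl}(\widehat{\O}_{\X,e})\cong\Z/n_e\Z$; if $v,v'$ are the adjacent vertices, then $[C_v]$ and $[C_{v'}]$ generate this group modulo the single relation $[C_v]+[C_{v'}]=0$, read off from $\divv(t)=V(x)+V(y)$. At a type-(b) node, the local ring $\widehat R[[x,y]]/(xy)$ is reducible and neither branch defines a Cartier divisor locally (each is cut out by a zero-divisor), so a combination $m_vC_v+m_{v'}C_{v'}$ is Cartier at such a node if and only if $m_v=m_{v'}$, making it a multiple of $\divv(t)$.

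Given $L_K\in\Pic(\X_K)$, I would choose a Cartier representative $D_K$ supported on the smooth locus of $\X_K$ and take its Zariski closure $\bar D\subset\X$; this attaches to $L_K$ a local class $\beta_e\in\Z/n_e\Z$ at each type-(a) node $e$. Modifying the lift by $Z=\sum_v m_vC_v$ shifts $\beta_e$ by $m_v-m_{v'}$ on type-(a) edges and enforces $m_v=m_{v'}$ on type-(b) edges, so $L_K$ extends to $\Pic(\X)$ exactly when the congruence system
\beqs
\begin{cases}
m_v-m_{v'}\equiv -\beta_e\pmod{n_e} & \text{for each type-(a) edge }e\colon v\to v',\\
m_v=m_{v'} & \text{for each type-(b) edge}
\end{cases}
\eeqs
admits an integer solution. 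Contracting the $\infty$-edges collapses the type-(b) equations, and a chain-complex computation on the contracted graph $\bar\Gamma$ identifies the cokernel of the resulting map $\Z^{V(\bar\Gamma)}\to\bigoplus_{e\in E(\bar\Gamma)}\Z/n_e\Z$ with $H^1(\bar\Gamma,\Z)/\langle n_e\cdot e^\vee\rangle$; dualizing over a basis of circuits $\gamma$ of $\bar\Gamma$ shows this group decomposes as $\bigoplus_\gamma\Z/d_\gamma\Z$ with $d_\gamma=\gcd_{e\in\gamma}n_e$. The cokernel vanishes iff every $d_\gamma=1$, i.e.\ iff $(\Gamma,l)$ is circuit-coprime, giving the sufficient direction with no hypothesis on $R$.

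For the converse, I would assume $R$ strictly henselian and fix a circuit $\gamma$ of $\bar\Gamma$ with $d:=d_\gamma>1$, and try to exhibit $L_K\in\Pic(\X_K)$ whose $(\beta_e)$ represent a non-zero class in $\Z/d\Z$. Strict henselianness ensures that every smooth closed point of $\X_s$ lifts to a section $S\to\X$, and a local computation relates the $\beta_e$ of such a section to which component it meets at each adjacent node; taking a suitable integer combination of sections supported on components along $\gamma$ should realise the desired obstruction. I expect this last step to be the main technical challenge: concretely realising every obstruction class by a line bundle on $\X_K$ requires both the abundance of sections (essential use of strict henselianness) and a precise local analysis of how the specialisation of a section contributes to each $\beta_e$.
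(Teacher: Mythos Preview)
Your approach is genuinely different from the paper's, and in the case where $\X_K$ is \emph{smooth} it would give a cleaner argument: $\X$ is then normal with only $A_{n-1}$ surface singularities, the local class groups $\Z/n_e\Z$ exactly measure the failure of a Weil divisor to be Cartier, and your congruence system captures the full obstruction. By contrast the paper first shows (Theorem~\ref{extending}, via N\'eron's defect of smoothness) that $L_K$ extends to some blow-up $\X_n$, then uses circuit-coprimality to twist by a vertical divisor so the extension has degree zero on every exceptional component, and finally pushes forward along $\X_n\to\X$ (Proposition~\ref{formalfunction}).

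There is, however, a genuine gap when $\X_K$ is nodal. At a type-(b) node $p$ the completed local ring $\widehat R[[x,y]]/(xy)$ is neither normal nor a domain, so there is no local class group controlling the obstruction. Concretely, the closure $\bar D$ of a closed point of $\X_K^{\rm sm}$ can specialize through $p$: in the local model this looks like $V(x-t,y)$, and this is \emph{not} Cartier at $p$, since its image in $\widehat R[[y]]=\widehat R[[x,y]]/(xy,x)$ is the maximal ideal $(t,y)$, which is not principal. Your constraint at type-(b) edges forces $m_v=m_{v'}$, so the vertical correction $Z$ is locally a multiple of $\divv(t)$ and cannot repair this; $\bar D+Z$ remains non-Cartier at $p$. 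Thus solving the congruence system is not sufficient for $L_K$ to extend. The paper's blow-up step is exactly what moves the horizontal divisor into the regular locus and eliminates this problem; some substitute for it is needed here.

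A smaller point: the claimed decomposition of the cokernel as $\bigoplus_\gamma \Z/d_\gamma\Z$ over a circuit basis is incorrect. On two vertices joined by three edges labelled $6,10,15$, the cokernel is $\Z/30\Z$, while any pair of fundamental circuits has gcds in $\{2,3,5\}$, giving at most $\Z/6$, $\Z/10$, or $\Z/15$ under your formula. What is true, and what you actually need, is that the cokernel vanishes iff the Smith normal form of the labelled fundamental circuit matrix has only $1$'s on the diagonal; this is the content of Lemma~\ref{SNFcoprime} and is equivalent to circuit-coprimality.
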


The proof (of the first statement) can be subdivided in three parts:

\begin{itemize}
\item we start by constructing a chain of proper birational morphisms of nodal curves over $S$
\begin{equation}
\ldots\ra \X_n \ra \X_{n-1}\ra \ldots \ra \X_1\ra \X_0:=\X \nonumber
\end{equation}
where every arrow is the blowing-up at the reduced closed subscheme of non-regular closed points. A generalization (Prop. \ref{lemmadimerda}) of the smoothening techniques developed in \citep{BLR}, Chapter 3, allows us to show that given a line bundle $L$ on $\X_K$ there exists a positive integer $n$ such that $L$ extends to a line bundle $\mathcal L$ on $\X_n$ (Theorem \ref{extending}).
\item in the combinatorial heart of the proof, we provide a dictionary between geometry and graph theory to reduce the study of the blowing-ups $\X_n$ and line bundles on them to the study of their dual labelled graphs and integer labellings of their edges. We show that if the labelled graph $(\Gamma,l)$ of $\X/S$ is circuit-coprime,  there exists a generically trivial line bundle $\mathcal E$ on $\X_n$ such that $\mathcal L\otimes \mathcal E$ has degree $0$ on each irreducible component of the exceptional fibre of $\pi_n\colon\X_n\ra \X$. 
\item Finally, we show (Proposition \ref{formalfunction}) that the direct image $\pi_{n *}(\mathcal L\otimes E)$ is a line bundle on $\X$ (which in particular extends $L$). This relies essentially on the fact that the exceptional fibre of $\pi_n$ is a curve of genus zero. 
\end{itemize} 

As a corollary to the theorem, we refine Theorem 8.1 of \citep{pepin} in the case of nodal curves $\X/S$ with special fibre having split nodes, by explicitly describing a blowing-up with center in the special fibre that yields a semi-factorial model:

\begin{corollary}[Corollary \ref{blowuppp}]\label{corollary}
Let $\X_1\ra \X$ be the blowing-up centered at the reduced closed subscheme consisting of non-regular closed points of $\X$. Then the curve $\X_1\ra S$ is semi-factorial. 
\end{corollary}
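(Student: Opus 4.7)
My plan is to deduce this from Theorem \ref{mainthm} by showing that the labelled dual graph $(\Gamma_1, l_1)$ of $\X_1/S$ is circuit-coprime. The strategy has two parts: an \'etale-local analysis of the blow-up at each non-regular closed point of $\X$, followed by a combinatorial check on the resulting graph.

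The non-regular closed points of $\X$ are exactly the nodes of the special fibre of type (a) with $n = l(e) \geq 2$ or of type (b) (with $l(e) = \infty$); these form a finite set. At such a node, \'etale-locally $\X = \Spec R[x,y]/(xy - t^n)$ (respectively $\Spec R[x,y]/(xy)$), and the center of the blow-up is the reduced closed point of ideal $\mathfrak{m} = (x,y,t)$. Using the standard charts and identifying the exceptional fibre via the projectivized tangent cone in $\mathbb{P}^2_k$ (the smooth conic $V(XY - T^2)$ for type (a) with $n = 2$, and $V(XY)$ for all other cases), direct chart computation yields:
\begin{itemize}
\item (type (a), $n = 2$): one exceptional $\mathbb{P}^1$ meeting both original branches transversally at regular nodes, so the edge of label $2$ becomes a path of two edges of label $1$;
\item (type (a), $n \geq 3$): two exceptional $\mathbb{P}^1$s meeting each other at a node with local equation $x'y' = t^{n-2}$ (visible in the $t$-chart) and each meeting one original branch at a regular node, so the edge of label $n$ becomes a path with labels $1, n-2, 1$;
\item (type (b)): two exceptional $\mathbb{P}^1$s meeting at a persisting type-(b) node (the specialization of the generic-fibre node) and each meeting one original branch at a regular node, so the edge of label $\infty$ becomes a path with labels $1, \infty, 1$.
\end{itemize}
Edges of label $1$ in $\Gamma$ are unchanged, and every interior vertex of a replaced path has degree $2$ in $\Gamma_1$.

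For circuit-coprimeness of $(\Gamma_1, l_1)$: contracting $\infty$-edges of $\Gamma_1$ collapses only the middle edges of paths replacing type-(b) edges, leaving a length-$2$ path with both labels $1$ in place of each; interior vertices of every replaced path retain degree $2$ in $\Gamma_1/\infty$. Let $C$ be a circuit in $\Gamma_1/\infty$. If $C$ enters any interior vertex of a replaced path, the degree-$2$ condition forces $C$ to traverse the entire path, so $C$ uses at least one of the outer label-$1$ edges; otherwise $C$ uses only label-$1$ edges of $\Gamma$. Either way $\gcd\{l_1(e) : e \in C\} = 1$. Hence $(\Gamma_1, l_1)$ is circuit-coprime, and the sufficiency direction of Theorem \ref{mainthm} yields semi-factoriality of $\X_1/S$.

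The main technical obstacle is the local blow-up analysis itself, particularly in the type-(b) case where $\X$ has a one-dimensional singular locus passing through the center: one must carefully compute the intrinsic blow-up to check that $\X_1/S$ is still a proper flat family of nodal curves with split nodes (so that Theorem \ref{mainthm} applies) and that the outer nodes of each replacement path are genuinely regular of label $1$. Once these local descriptions are secured, the combinatorial conclusion is essentially tautological: every non-trivial replacement path contains a label-$1$ boundary edge, forcing the gcd along any circuit to be $1$.
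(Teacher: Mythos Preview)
Your proposal is correct and follows essentially the same approach as the paper: show that the labelled graph $(\Gamma_1,l_1)$ of $\X_1/S$ is circuit-coprime and then invoke Theorem~\ref{mainthm}. The local blow-up analysis you carry out is already recorded in the paper (Lemma~\ref{nodalcurves} and subsection~\ref{blup}, with the graph-theoretic translation in Definition~\ref{blowupgraph} and Lemma~\ref{comparison}~iii)), so the paper's proof simply cites those and observes that every edge of $\Gamma_1$ with label $\neq 1$ is flanked by two label-$1$ edges, forcing any circuit in the contracted graph to contain a label-$1$ edge.
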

This follows immediately, observing that $\X_1$ has circuit-coprime labelled graph. \\

Semi-factoriality is closely connected to N\'eron models of jacobians of curves. A famous construction of Raynaud (\citep{Raynaud}) shows that if $\X\ra S$ has regular total space, a N\'eron model over $S$ for the jacobian $\Pic^0_{\X_K/K}$ is given by the $S$-group scheme $\Pic^{[0]}_{\X/S}/\cll(e)$, where $\Pic^{[0]}_{\X/S}$ represents line bundles of total degree zero on $\X$, and $\cll(e)$ is the schematic closure of the unit section $e\colon K\ra \Pic^0_{\X_K/K}$. In \citep{pepin}, Theorem 9.3., it is shown that the same construction works in the case of semi-factorial curves $\X\ra S$ with smooth generic fibre.  Our second main theorem is a corollary of Theorem \ref{Thmintro}:

\begin{theorem}[Theorem \ref{mainneron}]
Let $\X\ra S$ be a nodal curve over the spectrum of a discrete valuation ring. Then $\Pic_{\X/S}/\cll(e)$ is a N\'eron lft-model over $S$ for $\Pic_{\X_K/K}$ if and only if the labelled graph $(\Gamma,l)$ is circuit-coprime.
\end{theorem}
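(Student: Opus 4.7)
My plan is to reduce Theorem \ref{mainneron} to Theorem \ref{mainthm} by establishing the equivalence ``$\X/S$ is semi-factorial $\iff$ $\Pic_{\X/S}/\cll(e)$ is a N\'eron lft-model of $\Pic_{\X_K/K}$'', which refines the classical Raynaud--P\'epin construction to the setting of possibly nodal generic fibres.

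Both the circuit-coprime condition and the property of being a N\'eron lft-model are \'etale-local on $S$: the labels in the dual graph, read off from \'etale-local equations $xy=t^n$, are insensitive to \'etale base change on $S$, while the formation of N\'eron lft-models commutes with \'etale base change. I therefore replace $S$ by its strict henselisation, so that all nodes of $\X_s$ are split. With this reduction in place, Theorem \ref{mainthm} provides the biconditional between circuit-coprimeness and semi-factoriality, so it suffices to prove the claimed equivalence between semi-factoriality and the N\'eron lft-property.

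For the implication from semi-factoriality to the N\'eron lft-property I would follow \citep[Theorem 9.3]{pepin} closely. The Picard functor $\Pic_{\X/S}$ is represented by a smooth $S$-group algebraic space locally of finite type, since $\X/S$ is a proper flat family of geometrically reduced nodal curves; the closed subsheaf $\cll(e)$ is $S$-flat, the fppf quotient $\Pic_{\X/S}/\cll(e)$ exists as a smooth separated $S$-group algebraic space, and its generic fibre is $\Pic_{\X_K/K}$. The N\'eron mapping property is then checked by testing on essentially smooth local extensions of discrete valuation rings $S'\to S$: a $K'$-point of $\Pic_{\X_{K'}/K'}$ is represented, fppf-locally on $S'$, by a line bundle on $(\X_{S'})_{K'}$, which by semi-factoriality of $\X_{S'}/S'$ extends to a line bundle on $\X_{S'}$ and hence to an $S'$-point of $\Pic_{\X_{S'}/S'}$; uniqueness modulo $\cll(e)$ is automatic from separatedness. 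The key auxiliary fact is that smooth base change preserves the labelled graph (including the $\infty$-labels), so circuit-coprimeness, and with it semi-factoriality, descends to $\X_{S'}/S'$ by Theorem \ref{mainthm}.

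Conversely, suppose $\Pic_{\X/S}/\cll(e)$ is a N\'eron lft-model but $\X/S$ is not semi-factorial. Then there exists $L\in \Pic(\X_K)$ with no extension to $\X$; viewed as a $K$-point of $\Pic_{\X_K/K}$, it admits by the N\'eron mapping property an extension $\sigma\colon S\to \Pic_{\X/S}/\cll(e)$. The heart of the converse direction is to lift $\sigma$ to a section of $\Pic_{\X/S}$, which would yield a line bundle on $\X$ extending $L$ and produce the desired contradiction. The obstruction to lifting lives in an fppf cohomology group of $\cll(e)$ over the strictly henselian base $S$; I expect to control it by analysing $\cll(e)$ concretely, using that its generic fibre is trivial and that its special fibre is a closed flat subgroup scheme of the identity component of $(\Pic_{\X/S})_s$, together with the vanishing of \'etale $H^1$ of smooth commutative group schemes over $S$. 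This obstruction-vanishing step, intertwined with the subtle relation between $\Pic(\X)$ and the sheafified functor $\Pic_{\X/S}(S)$ when $\X$ has no global section, is where I expect the main technical difficulty to lie.
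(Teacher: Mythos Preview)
Your overall architecture matches the paper's: reduce to the strictly henselian case, invoke Theorem~\ref{mainthm}, and for the N\'eron side use Proposition~\ref{weakNM} together with the fact that essentially smooth extensions preserve the labelled graph. The forward direction (circuit-coprime $\Rightarrow$ N\'eron lft-model) is essentially as in the paper; note also that over $S^{sh}$ the curve $\X$ automatically acquires a section (the special fibre is generically smooth over a separably closed field, and one lifts by Hensel), so Lemma~\ref{rigidif} applies and your worry about $\Pic(\X)$ versus $\Pic_{\X/S}(S)$ evaporates.

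The gap is in your converse direction. You describe the special fibre of $\cll(e)$ as lying in the identity component of $\Pic_{\X_k/k}$, and plan an fppf-cohomological obstruction argument you admit you have not carried out. In fact the opposite is true: the multidegree map $E(\X)\to\Z^V$ of Lemma~\ref{comparison}(ii) is injective, so $\cll(e)$ meets $\Pic^0_{\X/S}$ only in the identity, and hence $\cll(e)\to S$ is \'etale (this is Lemma~\ref{Pic-N_etale}). Once you know the quotient map $\Pic_{\X/S}\to\mathcal N$ is an \'etale surjection, lifting $S^{sh}$-points is immediate over a strictly henselian base, and your contradiction goes through: if $\Pic(\X_{S^{sh}})\to\Pic(\X_{K^{sh}})$ fails to surject, so does $\mathcal N(S^{sh})\to\Pic_{\X_K/K}(K^{sh})$, and $\mathcal N$ is not a N\'eron lft-model. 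This single structural fact about $\cll(e)$ is what replaces your cohomological step. Finally, the descent from $S^{sh}$ back to $S$ for the N\'eron property is not quite a one-liner; the paper spells it out using schematic density of the generic fibre and separatedness of $\mathcal N$, and you should do the same rather than assert it.
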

Note that there are no smoothness assumptions on the generic fibre. The abbreviation ``lft'' stands for   ``locally of finite type'', meaning that we do not require the model to be quasi-compact (even if we chose to impose degree restrictions on $\Pic_{\X_K/K}$,  the resulting N\'eron lft-model may not be quasi-compact in general, as $\X_K/K$ may not be smooth).

\subsection*{Outline}
In section \ref{s1} we introduce the basic definitions, including that of nodal curve with split singularities. In section \ref{s2} we define an infinite chain of blow-ups of a given nodal curve $\X/S$ and then show that every line bundle on the generic fibre $\X_K/K$ extends to a line bundle on one of these blow-ups (section \ref{sectionextending}). Section \ref{s4} contains an important technical lemma on descent of line bundles along blowing-ups. 
Section \ref{sectiongraphtheory} is entirely graph-theoretic and contains the definition of circuit-coprime labelled graphs. The combinatorial results established in this section are then reinterpreted in section \ref{s6} in geometric terms in order to give a necessary and sufficient condition for semi-factoriality of nodal curves. In section \ref{s7}, starting from a nodal curve $\X/S$, we construct a N\'eron model of the Picard scheme of its generic fibre.

\subsection*{Acknowledgements}
I am extremely grateful to David Holmes for introducing me to the topic and supporting me patiently with invaluable advice and commitment.
I would also like to thank Alberto Bellardini and Bas Edixhoven for helpful comments and discussions and Raymond van Bommel for kindly devoting a whole afternoon to helping me solve some issues that arose while working out examples. Finally, I express my gratitude to the anonymous referee, for promptly producing a very accurate report pointing out several imprecisions in the first version. The overall quality of the paper benefited enormously from their comments.

\newpage

\newpage

\section{Preliminaries} \label{s1}
\subsection{Nodal curves} \label{subsectionnodalcurves}

\begin{definition} A \textit{curve} $X$ over an algebraically closed field $k$ is a proper morphism of schemes $X\ra \Spec k$, such that $X$ is connected and whose irreducible components have dimension $1$. A curve $X/k$ is called \textit{nodal} if for every non-smooth point $x\in X$ there is an isomorphism of $k$-algebras $\widehat{\O}_{\X,x}\ra k[[x,y]]/xy$.


For a general base scheme $S$, a \textit{nodal curve} $f\colon\X\ra S$ is a proper, flat morphism of finite presentation, such that for each geometric point $\overline{s}$ of $S$ the fibre $\X_{\overline s}$ is a nodal curve.
\end{definition}
We are interested in the case where the base scheme $S$ is a trait, that is, the spectrum of a discrete valuation ring. In what follows, whenever we have a trait $S$, unless otherwise specified we will denote by $K$ the fraction field of $\Gamma(S,\O_S)$ and by $k$ its residue field.

\begin{definition}
Let $X\ra \Spec k$ be a nodal curve over a field and $n\colon X'\ra X$ be the normalization morphism. A non-regular point $x\in X$ is a \textit{split ordinary double point} if the points of $n^{-1}(x)$ are $k$-rational (in particular, $x$ is $k$-rational). We say that $X\ra \Spec k$ has \textit{split singularities} if all non-regular points $x\in X$ are split ordinary double points.
\end{definition}

It is clear that the base change of a curve with split singularities still has split singularities. Also, it follows from \citep{liu}, Corollary 10.3.22 that for any nodal curve $\X\ra S$ over a trait there exists an \'etale base change of traits $S'\ra S$ such that $\X\times_SS'\ra S'$ has split singularities.

The following two lemmas are Corollary 10.3.22 b) and Lemma 10.3.11  of \citep{liu}:

\begin{lemma} \label{singularities}
Let $f\colon\X\ra S$ be a nodal curve over a trait and let $x\in \X$ be a split ordinary double point lying over the closed point $s\in S$. Write $R$ for $\Gamma(S,\O_S)$ and $\m$ for its maximal ideal. Then 
$$
\widehat{\O}_{\X, x}\cong  \frac{\widehat{R}[[x,y]]}{xy-c}
$$
for some $c\in \m R$. The ideal generated by $c$ does not depend on the choice of $c$.
\end{lemma}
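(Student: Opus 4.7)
My plan is to establish both the existence of such a presentation and the independence of the ideal $(c)$ on the choices made, by first constructing a surjection $\widehat{R}[[X,Y]] \twoheadrightarrow \widehat{\O}_{\X,x}$, reducing its (principal) kernel to the normal form $XY-c$, and then identifying $(c)$ with an invariant computed intrinsically from $\widehat{\O}_{\X,x}$ as a $\widehat{R}$-algebra.

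For existence I would lift the local coordinates $x,y$ giving the node in the special fibre $\widehat{\O}_{\X_s,x}\cong k[[x,y]]/(xy)$ to elements $X,Y\in \widehat{\O}_{\X,x}$. Together with a uniformizer $\pi$ they generate the maximal ideal, so Nakayama combined with $\pi$-adic completeness yields a surjection $\widehat{R}[[X,Y]]\twoheadrightarrow \widehat{\O}_{\X,x}$. The source is regular local of dimension $3$ and the target is flat over $\widehat{R}$ of relative dimension $1$, hence dimension $2$; the kernel $I$ reduces modulo $\pi$ to $(XY)\subset k[[X,Y]]$, and a standard lifting/Krull intersection argument shows that if $F\in I$ lifts $XY$ then $I=(F)$. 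After rescaling by a unit we may assume $F\equiv XY\pmod{\pi}$, i.e., $F=XY+\pi G$ for some $G\in \widehat{R}[[X,Y]]$. To reach the normal form, decompose $G$ uniquely as $G=G(0,0)+XG_1(X,Y)+YG_2(Y)$ and set $X_1=X+\pi G_2(Y)$, $Y_1=Y+\pi G_1(X,Y)$; direct expansion gives $F=X_1Y_1-c_1-\pi^2 G_1 G_2$ with $c_1=-\pi G(0,0)\in \m\widehat{R}$, so the error term has been promoted from $\pi$-order $1$ to $\pi$-order $2$. Iterating and using $\pi$-adic completeness of $\widehat{R}[[X,Y]]$, the successive coordinate changes converge to formal coordinates $X',Y'$ and a constant $c\in \m\widehat{R}$ with $F=X'Y'-c$.

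For independence of $(c)$, I would use the first Fitting ideal of $\Omega^1_{\widehat{\O}_{\X,x}/\widehat{R}}$, which depends only on the $\widehat{R}$-algebra $\widehat{\O}_{\X,x}$. In the presentation $A=\widehat{R}[[X,Y]]/(XY-c)$, the module $\Omega^1_{A/\widehat{R}}$ has the $2\times 1$ presentation matrix $(Y,X)^T$ coming from $d(XY-c)=Y\,dX+X\,dY=0$, so $\mathrm{Fit}_1(\Omega^1_{A/\widehat{R}})=(X,Y)\subset A$. Quotienting yields $A/(X,Y)\cong \widehat{R}/(c)$, and the kernel of the induced map $\widehat{R}\to A/\mathrm{Fit}_1(\Omega^1_{A/\widehat{R}})$ recovers $(c)$. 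Since the Fitting ideal is intrinsic, so is $(c)$.

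The main obstacle I foresee is the iterative change-of-variables step: although each step is a simple polynomial computation, one has to check that the coordinate changes compose to a well-defined $\widehat{R}$-algebra automorphism of $\widehat{R}[[X,Y]]$ in the limit, which requires a careful bookkeeping of $\pi$-adic orders. A one-shot application of Weierstrass preparation would be tempting but does not directly apply, since both $F(X,0)$ and $F(0,Y)$ vanish modulo $\pi$ and no Weierstrass condition holds at the outset.
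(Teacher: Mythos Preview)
Your argument is correct. The paper itself does not prove this lemma: it simply cites Liu, \emph{Algebraic Geometry and Arithmetic Curves}, Corollary~10.3.22(b), so there is no in-paper proof to compare against. Your existence argument via iterated $\pi$-adic coordinate changes is the standard one (essentially what one finds in Liu); the bookkeeping concern you flag is real but routine, since the successive automorphisms $\tau_n$ differ from the identity by elements of $\pi^{a_n}\widehat{R}[[X,Y]]$ with $a_n\to\infty$, so the infinite composition converges in the natural topology on $\Aut_{\widehat{R}}\bigl(\widehat{R}[[X,Y]]\bigr)$.

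One small clarification is needed in the independence step: the K\"ahler module $\Omega^1_{A/\widehat{R}}$ for $A=\widehat{R}[[X,Y]]/(XY-c)$ is not finitely generated over $A$ and does not admit the $1\times 2$ presentation you write down; what you are actually computing is the module of \emph{continuous} (formally completed) differentials $\widehat{\Omega}^1_{A/\widehat{R}}$. This module is intrinsic to $A$ as a complete local $\widehat{R}$-algebra and does have the presentation with matrix $(Y,X)$, so with that substitution your identification of $(c)$ as the kernel of $\widehat{R}\to A/\mathrm{Fit}_1\bigl(\widehat{\Omega}^1_{A/\widehat{R}}\bigr)$ goes through. Equivalently and perhaps more geometrically: $\mathrm{Fit}_1$ cuts out the non-smooth locus of $\Spec A\to\Spec\widehat{R}$, and its scheme-theoretic image in $\Spec\widehat{R}$ is $\Spec\widehat{R}/(c)$.
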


We define an integer $\tau_x\in \Z_{\geq 1}\cup \{\infty\}$, given by the valuation of $c$ if $c\neq 0$ and by $\infty$ if $c=0$. We call $\tau_x$ the \textit{thickness} of $x$. The point $x$ is non-regular if and only if $\tau_x \in \mathbb{Z}_{\geq 2}\cup \{\infty\}$; moreover, $\tau_x=\infty$ if and only if $x$ is the specialization of a node of the generic fibre $\X_K$.

\begin{remark}
If the hypothesis that the special fibre has split singularities is dropped, the same result holds after replacing $R$ and $\O_{\X,x}$ by their strict henselizations.
\end{remark}

\begin{lemma} \label{twocomponents}
Let $X$ be a nodal curve over a field $k$, $x\in X$ a split ordinary double point such that at least two irreducible components of $X$ pass through $x$. Then $x$ belongs to exactly two irreducible components $Z_1,Z_2$ which are smooth at $x$ and meet transversally. 
\end{lemma}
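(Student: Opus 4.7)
The plan is to read off the structure of $X$ near $x$ from the completed local ring $\widehat{\mathcal{O}}_{X,x}$, and then descend the conclusions to $\mathcal{O}_{X,x}$ itself by invoking excellence of local rings of finite-type $k$-schemes.

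Since $x$ is a split ordinary double point on the curve $X/k$, a standard consequence of the split condition is the explicit normal form
$$\widehat{\mathcal{O}}_{X,x}\;\cong\;k[[u,v]]/(uv),$$
i.e., the same shape as in Lemma \ref{singularities} with the parameter $c$ equal to $0$. This ring has exactly two minimal primes $(u)$ and $(v)$, with regular quotients $k[[v]]$ and $k[[u]]$ respectively, and satisfies $(u)+(v)=\widehat{\mathfrak{m}}$. All three conclusions of the lemma are then immediate after completion.

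To descend them, I would use that $\mathcal{O}_{X,x}$ is excellent, so its faithfully flat completion preserves reducedness of local domains and every minimal prime downstairs is the contraction of a minimal prime upstairs. Combined with the hypothesis that at least two components pass through $x$, this forces the contraction map on minimal primes to be a bijection onto a $2$-element set; so $\mathcal{O}_{X,x}$ has exactly two minimal primes $\mathfrak{p}_1,\mathfrak{p}_2$, corresponding to exactly two irreducible components $Z_1,Z_2$ of $X$ through $x$. For each $i$, the domain $\mathcal{O}_{X,x}/\mathfrak{p}_i$ has reduced completion $\widehat{\mathcal{O}}_{X,x}/\mathfrak{p}_i\widehat{\mathcal{O}}_{X,x}$ with a single minimal prime, which forces $\mathfrak{p}_i\widehat{\mathcal{O}}_{X,x}\in\{(u),(v)\}$ and hence $\widehat{\mathcal{O}}_{Z_i,x}\cong k[[t]]$. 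Regularity descends along faithfully flat local maps, so $Z_i$ is smooth of dimension $1$ at $x$. Finally, the identity $\mathfrak{p}_1\widehat{\mathcal{O}}_{X,x}+\mathfrak{p}_2\widehat{\mathcal{O}}_{X,x}=(u)+(v)=\widehat{\mathfrak{m}}$ together with faithful flatness yields $\mathfrak{p}_1+\mathfrak{p}_2=\mathfrak{m}_x$, which is the transversality statement.

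The only real obstacle is the bookkeeping around how minimal primes, ideal expansions, and regularity propagate between $\mathcal{O}_{X,x}$ and its completion; given the normal form of $\widehat{\mathcal{O}}_{X,x}$, the remaining content is a direct computation in $k[[u,v]]/(uv)$.
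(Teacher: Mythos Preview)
Your argument is correct. The paper does not supply its own proof of this lemma but instead cites \citep{liu}, Lemma~10.3.11; your approach via the explicit description of $\widehat{\mathcal{O}}_{X,x}\cong k[[u,v]]/(uv)$ and descent along the faithfully flat completion map (using excellence to control minimal primes and reducedness) is precisely the standard route and matches what one finds in the cited reference.
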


In view of Lemma \ref{twocomponents}, if $X/k$ is a nodal curve with split singularities, the \textit{dual graph} $G$ of $X$ can be defined. The vertices of $G$ correspond to the irreducible components of $X$, while every edge $e$ between vertices $v,w$ corresponds to a an ordinary double point contained in the components corresponding to $v$ and $w$.

\subsection{Semi-factoriality}
\begin{definition}[\citep{pepin} 1.1.]\label{defsemifact}
Let $\X\ra S$ be a scheme over a trait. We say that $\X$ is \textit{semi-factorial} over $S$ if the restriction map
$$\Pic(\X)\ra \Pic(\X_K)$$
is surjective.
\end{definition}



\section{Blowing-up nodal curves}\label{s2}
Let $f\colon\X\ra S$ be a nodal curve over a trait. In this section we study the effects of blowing-up non-regular points of $\X$ lying on the special fibre of $\X\ra S$.

\subsection{Blowing-up a closed non-regular point}
\begin{lemma} \label{nodalcurves}
Let $\X\ra S$ be a nodal curve over a trait. Let $x$ be a non-regular point lying on the special fibre of $\X\ra S$. The blowing-up $\pi\colon \til{\X}\rightarrow \X$ centered at (the reduced closed subscheme given by) $x$ gives by composition a nodal curve $\til{\X}\ra S$.
\end{lemma}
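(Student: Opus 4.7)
My plan is to verify each clause in the definition of a nodal curve for the composition $\til\X\to S$: properness, flatness, finite presentation, and the condition on geometric fibres. Properness of the blowing-up $\pi\colon\til\X\to\X$ is standard (it is projective over $\X$), as is finite presentation; both properties thus pass to the composition with $\X\to S$. Since the center $\{x\}$ of the blowing-up lies in the special fibre, $\pi$ is an isomorphism over $\X\setminus\{x\}$; in particular the generic fibre of $\til\X\to S$ equals $\X_K$, whose geometric version $\X_{\ol K}$ is a nodal curve by the hypothesis on $\X\to S$.

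The remaining verifications --- flatness of $\til\X\to S$ and nodality of the geometric special fibre near $x$ --- are \'etale-local at $x$. By Lemma~\ref{singularities} and the subsequent remark, which allows us to pass to strict henselizations when the node at $x$ is not split, it suffices to compute the blowing-up of the ideal $(x,y,t)$ in the local model $\Spec R[x,y]/(xy-c)$, where $c=t^{\tau_x}$ if $\tau_x$ is finite and $c=0$ if $\tau_x=\infty$. I would present this blowing-up as the strict transform of $V(xy-c)$ inside the standard blowing-up of $\A^2_R$ at the origin, obtaining three affine charts:
\begin{itemize}
\item the $x$-chart $\Spec R[x,v]/(xv-t)$, with $v=t/x$;
\item the $y$-chart $\Spec R[y,w]/(yw-t)$, symmetrically;
\item the $t$-chart $\Spec R[X,Y]/(XY-t^{\tau_x-2})$ if $\tau_x$ is finite, and $\Spec R[X,Y]/(XY)$ if $\tau_x=\infty$, with $X=x/t$ and $Y=y/t$.
\end{itemize}
In each chart the defining relation is a regular element of a polynomial ring over $R$, so $t$ remains a non-zero-divisor; combined with flatness of $\X\to S$ away from $x$, this yields flatness of $\til\X\to S$.

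Finally, reducing the three charts modulo $t$ gives local models for the geometric special fibre of $\til\X\to S$: the $x$- and $y$-charts become $\Spec k[x,v]/(xv)$ and $\Spec k[y,w]/(yw)$, and the $t$-chart becomes $\Spec k[X,Y]/(XY)$ (or a smooth hyperbola when $\tau_x=2$). Every non-smooth point of $\til\X_{\ol s}$ is thus an ordinary double point. Connectedness of $\til\X_{\ol s}$ is preserved: the exceptional fibre $E$, assembled from the charts, is a connected configuration of projective lines meeting the strict transform of each local branch of $\X_{\ol s}$ at $x$, so $\til\X_{\ol s}=E\cup(\X_{\ol s}\setminus\{x\})$ is connected. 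The delicate case is $\tau_x=\infty$: the $t$-chart still contains a non-regular point of $\til\X$ of the form $XY=0$, but this is itself a node persisting to the generic fibre, so the generic fibre is unchanged and the special fibre remains nodal as required. Assembling these verifications gives that $\til\X\to S$ is a nodal curve.
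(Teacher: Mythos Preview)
Your proof is correct and follows the same approach as the paper's: reduce via the local description of Lemma~\ref{singularities} to an explicit three-chart computation of the blow-up, then verify flatness (checking that $t$ is a non-zero-divisor in each chart) and nodality of the special fibre. The only cosmetic differences are that the paper computes the $\Proj$ directly over the completed \'etale-local ring $\widehat R^{sh}[[u,v]]/(uv-c)$ and writes out only the case $c=0$ (citing \cite{liu}, Example~8.3.53 for $c\neq 0$), whereas you use the strict-transform presentation in the polynomial model and handle both cases uniformly.
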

\begin{proof}
The map $\pi\colon\til{\X}\ra \X$ is proper, hence so is the composition $\til{\X}\ra S$. 
Let $\overline{x}$ be a geometric point of $\X$ lying over $x$. We write $A:=\widehat{R}^{sh}$ for the completion at its maximal ideal of the strict henselization of $R$ induced by $\overline x$. Similarly we let $B:=\widehat{\O}^{\acute{e}t}_{\X,\overline x}$ be the completion of the \'etale local ring of $\X$ at $\overline x$. We have $B\cong A[[u,v]]/uv-c$ for some $c\in A$; we will assume that $c=0$, as the reader can refer to \citep{liu}, Example 8.3.53 for the case $c\neq 0$. 

The blowing-up $\mathcal Z\rightarrow \Spec B$ at the maximal ideal $\m=(t,u,v)\subset B$ fits in a cartesian diagram
\begin{center}
$
\xymatrix{
\mathcal Z \ar[r]\ar[d] & \til{\X} \ar[d]^{\pi} \\
\Spec B \ar[r] & \X
}
$
\end{center} 
with flat horizontal maps and is given by
$$\mathcal Z=\Proj \frac{B[S,U,V]}{I}$$
where $I$ is the homogenous ideal 
$$I=(uS-t U,vS-t V,uV,vU,UV).$$
The scheme $\mathcal Z$ is covered by three affine patches, given respectively by the loci where $S,U,V$ are invertible. Namely we have:
\begin{equation}
D^+(S)\cong \Spec \frac{A[U,V]}{UV}, \; D^+(U)\cong \Spec \frac{A[[u]][S]}{t-uS}, \; D^+(V)\cong \Spec \frac{A[[v]][S]}{t-vS}. \nonumber
\end{equation}
To see that $\til{\X}$ is $S$-flat, we check that the image of the uniformizer $t\in R$ is torsion-free in $\O_{\til{\X}}$, which is immediate upon inspection of the coordinate rings of $D^+(S), D^+(U), D^+(V)$. Also, for all field valued points $y\colon \Spec L\ra \Spec A$ lying over the closed point of $\Spec A$,  the completed local rings at the singular points of $\mathcal Z_y$ are of the form $L[[x,y]]/xy$, as desired.

\end{proof}

\subsection{An infinite chain of blowing-ups}
Write now $\X^{nreg}$ for the non-regular locus of $\X$. By the very definition of nodal curve, the locus $\X^{nreg}$ is a closed subset of $\X$, and in particular its intersection with the special fibre $\X_k\cap \X^{nreg}$ is a finite union of closed points. We inductively construct a chain of proper birational maps of nodal curves as follows.
\begin{construction}
Let $Y_0$ be the closed subscheme given by $\X_k\cap \X^{nreg}$ with its reduced structure.
Blowing-up $\Y_0$ in $\X$ we obtain a proper birational morphism $\pi_1 \colon\X_1\ra \X$, which restricts to an isomorphism on the dense open $\X\setminus \Y_0$ and in particular over the generic fibre. For $i\in \Z_{\geq 1}$ we let $Y_i:=(\X_i)_k\cap (\X_i)^{nreg}$ with its reduced structure, and define $\X_{i+1}\ra \X_i$ to be the blowing-up at $Y_i$. We obtain a (possibly infinite) chain of proper birational $S$-morphisms between nodal curves,
\begin{equation}(\pi_n\colon\X_n\ra \X_{n-1})_{n\in \Z_{\geq 1}}, \;\; \X_0:=\X \label{chainn} \end{equation} 
which eventually stabilizes if and only if the generic fibre $\X_K$ is regular. 
\end{construction}

\subsection{The case of split singularities} \label{blup}
From the calculations of the Lemma \ref{nodalcurves} we deduce how blowing-up alters the special fibre of a nodal curve whose special fibre has split singularities. Let $\X\ra S$ be such a curve and let $p\in \X$ be a non-regular point of the special fibre. We have $k(p)=k$. Let $\pi\colon\til{\X}\ra \X$ be the blow-up at $p$, $Y=\Spec \widehat{\O}_p$, and $\widetilde Y=Y\times_{\X}\widetilde{\X}$. Then $\pi_Y\colon \widetilde Y\ra Y$ is the blowing-up at the closed point $q$ of $Y$. Explicit calculations show that the exceptional fibre $\pi_Y^{-1}(q)$ is a chain of projective lines meeting transversally at nodes defined over $k(q)$. The equality $k(q)=k$ induces an isomorphism between the exceptional fibre of $\pi_Y\colon \til Y\ra Y$ and the exceptional fibre $\pi^{-1}(p)$ of $\pi\colon \til \X\ra \X$. 

We now distinguish all possible cases:
\begin{itemize}
\item If $\tau_p=\infty$, so that $p$ is the specialization of a node $\zeta$ of $\X_K$, $\pi^{-1}(p)$ is given by two copies of $\mathbb{P}^1_k$ meeting at a $k$-rational node $p'$ with $\tau_{p'}=\infty$;
\item if $\tau_p=2$, $\pi^{-1}(p)$ consists of one $\mathbb P^1_k$;
\item finally, if $\tau_p>2$, then $\pi^{-1}(p)$ consists again of two copies of $\mathbb P^1_k$, meeting at a $k$-rational node $p'$ with $\tau_{p'}=\tau_p-2$. 
\end{itemize}
In all cases, the intersection points between $\pi^{-1}(p)$ and the closure of its complement in $\til{\X}_k$ are regular in $\til{\X}$, that is, they have thickness $1$, and are $k$-rational. Moreover, $\til\X\ra S$ has special fibre with split singularities.

\section{Extending line bundles to blowing-ups of a nodal curve} \label{sectionextending}

Our first aim is to prove that for any line bundle $L$ on the generic fibre $\X_K$, there exists an $n\geq 0$ such that $L$ extends to a line bundle on the surface $\X_n$ of the chain of nodal curves \eqref{chainn}. In order to do this, we recall and slightly generalize the definition of N\'eron's measure for the defect of smoothness  presented in \citep{BLR}, Chapter 3.

\definition
Let $R$ be a discrete valuation ring and $\ZZ$ an $R$-scheme of finite type. Let $R\ra R'$ be a local flat morphism of discrete valuation rings. Let $a\in \ZZ(R')$ and denote by $\Omega^1_{\ZZ/R}$ the $\O_{\ZZ}$-module of $R$-differentials. The pullback $a^*\Omega^1_{\ZZ/R}$ is a finitely-generated $R'$-module, thus a direct sum of a free and a torsion sub-module. We define \textit{N\'eron's measure for the defect of smoothness} of $\ZZ$ along $a$ as
$$\delta(a):=\mbox{length of the torsion part of }a^*\Omega^1_{\ZZ/R}$$

\begin{remark}In \citep{BLR} 3.3, the measure for the defect of smoothness is defined for points with values in the strict henselization $R^{sh}$ of $R$ (which amounts to considering only local \'etale morphisms $R\ra R'$). We allow more general maps because we will need them in the proof of Theorem \ref{extending}.
\end{remark}

The following two lemmas generalize two analogous results in \citep{BLR} 3.3, concerning N\'eron's measure for the defect of smoothness to the case of points $a\in \ZZ(R')$ with $R'$ a (possibly ramified) local flat extension of $R$. In the following lemma, we denote by $\ZZ^{sm}$ the $S$-smooth locus of $\ZZ$.
\begin{lemma} \label{nerondefectzero}
Let $R$ be a discrete valuation ring and $\ZZ$ an $R$-scheme of finite type. Let $a\in \ZZ(R')$ for some local flat extension $R\ra R'$ of discrete valuation rings. Assume that the restriction to the generic fibre $a_{K'}\colon\Spec K'\ra \ZZ_{K'}$ factors through the smooth locus $\ZZ^{sm}_{K'}$ of $\ZZ_{K'}$. Then 
$$\delta(a)=0 \Leftrightarrow a\in \ZZ^{sm}(R')$$
\end{lemma}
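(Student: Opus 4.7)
My plan for the forward direction is immediate: $a\in\ZZ^{sm}(R')$ means the image of $a$ lies in the open subscheme $\ZZ^{sm}$, where $\Omega^1_{\ZZ/R}$ is locally free of finite rank, so $a^*\Omega^1_{\ZZ/R}$ is automatically a free $R'$-module and $\delta(a)=0$.

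For the converse, the plan is a two-step reduction to the strictly henselian case, which is already in \citep{BLR} 3.3. First I would base change $\ZZ$ along $R\to R'$, replacing $\ZZ/R$ by $\ZZ':=\ZZ\times_R R'$ and $a$ by the corresponding section $\sigma\in\ZZ'(R')$ (so that $\pi\circ\sigma=a$, where $\pi\colon\ZZ'\to\ZZ$ is the projection). The key point is that K\"ahler differentials commute with base change: $\Omega^1_{\ZZ'/R'}\cong \pi^*\Omega^1_{\ZZ/R}$, whence $\sigma^*\Omega^1_{\ZZ'/R'}\cong a^*\Omega^1_{\ZZ/R}$ as $R'$-modules and $\delta(\sigma)=\delta(a)$. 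Since $R\to R'$ is faithfully flat (a local flat map of local rings), the smooth locus satisfies $(\ZZ')^{sm}=\pi^{-1}(\ZZ^{sm})$, so the factorization property through the smooth locus is equivalent for $a$ and $\sigma$, and the generic-fibre hypothesis is preserved. This reduces the problem to the case $R=R'$.

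In the second step, assuming $a\in\ZZ(R)$, I would base change further along the \'etale local ring map $R\to R^{sh}$ to obtain $\tilde a\in\ZZ(R^{sh})$. Because $R\to R^{sh}$ is \'etale, a uniformizer of $R$ remains a uniformizer of $R^{sh}$, and tensoring with $R^{sh}$ preserves the length of the torsion submodule of any finitely generated $R$-module (torsion parts of the form $R/(t^k)$ base change to $R^{sh}/(t^k)$ of the same length); hence $\delta(\tilde a)=\delta(a)=0$. The strictly henselian case proved in \citep{BLR} 3.3 then yields that $\tilde a$ factors through $\ZZ^{sm}$, and since $\Spec R^{sh}\to\Spec R$ is surjective on underlying spaces, the image of $a$ coincides with that of $\tilde a$, so $a$ itself factors through $\ZZ^{sm}$. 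The main subtlety in this plan is purely bookkeeping: checking that $\delta$, the smooth-locus factorization, and the generic-fibre hypothesis all transfer correctly through the two base changes; the geometric content is entirely inherited from the BLR result.
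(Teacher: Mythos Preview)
Your proposal is correct and follows essentially the same route as the paper's proof: base change along $R\to R'$ using $a^*\Omega^1_{\ZZ/R}\cong\sigma^*\Omega^1_{\ZZ_{R'}/R'}$ and preservation of the smooth locus under faithfully flat base change, then invoke \cite{BLR} 3.3/1. The only cosmetic difference is that the paper stops after the first reduction (a section is already in the local-\'etale setting that \cite{BLR} 3.3/1 covers, once one observes that the BLR proof goes through when $a_K$ lands in the smooth locus rather than the whole generic fibre being smooth), whereas you add an explicit second pass to $R^{sh}$; both are fine.
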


\begin{proof}
See \citep{BLR} 3.3/1, for a proof in the case of smooth generic fibre and $R\ra R'$ a local \'etale map of discrete valuation rings. The same proof works for non-smooth generic fibre, as long as $a_K$ factors through $\ZZ^{sm}$. Now notice that $a^*\Omega_{\ZZ/R}\cong (a')^*\Omega_{\ZZ_{R'}/R'}$, where $a'\colon \Spec R'\ra \ZZ_{R'}$ is the section induced by $a$. We conclude by the fact that the smooth locus of $\ZZ/R$ is preserved under the faithfully flat base change $\Spec R'\ra \Spec R$.
\end{proof}


\begin{proposition} \label{lemmadimerda}
Let $R$ be a discrete valuation ring, $\ZZ$ an $R$-scheme of finite type, $f\colon R\ra R'$ a local flat morphism of discrete valuation rings with ramification index $r\in \Z_{\geq 1}$. Suppose $a\in \ZZ(R')$ is such that the restriction to the generic fibre $a_{K'}$ factors through the smooth locus of $\ZZ_K$. Let $\pi\colon\widetilde{\ZZ}\ra \ZZ$ be the blowing-up at the closed point $p=a\cap\ZZ_k$ with its reduced structure and denote by $\widetilde{a}\in \widetilde{\ZZ}(R')$ the unique lifting of $a$ to $\widetilde{\ZZ}$. Then 
$$\delta(\widetilde{a})\leq \max(\delta(a)-r,0).$$
\end{proposition}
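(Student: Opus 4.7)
The proposition generalizes \citep{BLR}, Ch.~3, Lemma 3.3/5, which is exactly the $r=1$ case (étale extensions). I would adapt the BLR smoothening argument by working in local coordinates around $p$, identifying the affine chart of the blow-up that contains $\tilde a$, and comparing the Jacobian matrices at $a$ and at $\tilde a$ over the DVR $R'$. The improvement from a drop of $1$ to a drop of $r$ reflects the fact that the uniformizer $t$ of $R$ has $R'$-valuation $r$, so each factor of $t$ extracted by the blow-up contributes $r$ units of valuation rather than $1$.

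\textbf{Local reduction and chart.} First I would shrink $\ZZ$ to an affine open around $p$ and pass to the completion $\widehat{\O}_{\ZZ,p}$, which preserves the torsion length of $a^*\Omega^1_{\ZZ/R}$. Choose a presentation $\widehat{\O}_{\ZZ,p} \cong \widehat R[[X_1,\ldots,X_n]]/(f_1,\ldots,f_m)$ in which $\m_p = (t, X_1, \ldots, X_n)$; then $a$ corresponds to $a_i := a^*(X_i) \in \m_{\widehat{R'}}$ with $e_i := v'(a_i) \geq 1$, and the unique lift $\tilde a$ lies in the chart of the blow-up at $\m_p$ corresponding to the generator attaining $e := \min(r, e_1, \ldots, e_n)$. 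Two cases arise. In Case (A) $e = r$, so $t$ dominates: the chart has coordinates $S_1, \ldots, S_n$ with $X_i = tS_i$ and $\tilde a^*(S_i) = a_i/t \in R'$. In Case (B) $e = e_{i_0} < r$ for some $i_0$, so $X_{i_0}$ dominates: the chart has coordinates $X_{i_0}, T, S_j$ ($j \neq i_0$) subject to $X_j = X_{i_0}S_j$ and $X_{i_0}T = t$, with $\tilde a$ sending $(X_{i_0}, T, S_j) \mapsto (a_{i_0}, t/a_{i_0}, a_j/a_{i_0})$. In each chart I would write the strict transform $\tilde g_\alpha$ by factoring out the maximum power of the dominating coordinate from $f_\alpha$ after the substitution.

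\textbf{Jacobian identity, torsion bound, and main obstacle.} A direct chain-rule computation in Case (A) gives
\[ \tilde a^*\!\left(\frac{\partial \tilde g_\alpha}{\partial S_i}\right) \;=\; \frac{1}{t^{d_\alpha - 1}}\, a^*\!\left(\frac{\partial f_\alpha}{\partial X_i}\right), \]
where $d_\alpha \geq 1$ is the $(t, X_1, \ldots, X_n)$-adic order of $f_\alpha$; thus the $\alpha$-th column of the Jacobian at $\tilde a$ is the corresponding column at $a$ rescaled by a factor of $R'$-valuation $r(d_\alpha - 1)$. An analogous identity holds in Case (B), with $t$ replaced by $a_{i_0}$ (of valuation $e_{i_0}$) and an additional Jacobian row coming from differentiating the relation $X_{i_0}T - t = 0$. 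Translating this via Smith normal form over $\widehat{R'}$: in Case (A) the column rescalings by $t^{-(d_\alpha - 1)}$ enlarge the image submodule of $R'^n$ and absorb at least $r$ units of torsion length, so $\delta(\tilde a) \leq \delta(a) - r$; in Case (B) the extra Jacobian row from $X_{i_0}T = t$ combines with the rescaling by $a_{i_0}$ to give the same net drop of $r$, and one can alternatively verify directly (as can be confirmed in the prototypical nodal examples $XY = t^n$) that $\tilde a$ specializes into the smooth locus of $\tilde\ZZ$, so that $\delta(\tilde a) = 0$ by Lemma \ref{nerondefectzero}. The ``$\max$ with $0$'' in the statement then handles the case $\delta(a) < r$ automatically, since $\delta(\tilde a) \geq 0$. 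The principal obstacle will be the elementary-divisor bookkeeping in Case (A): because the multiplicities $d_\alpha$ depend on $\alpha$, one must show that the various column rescalings combine to give a uniform total drop of at least $r$ in torsion length—not merely a drop of $r$ per column, which is generally false—and this requires using crucially that $a_K$ factors through the smooth locus to keep the generic rank of the Jacobian constant through the blow-up.
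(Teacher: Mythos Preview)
Your approach differs substantially from the paper's. The paper does not redo the BLR Jacobian computation in the ramified setting; instead it reduces to the unramified case of \cite{BLR}~3.3/5 as a black box. After base-changing the whole picture to $R'$, the section $a$ becomes an honest $R'$-section $b$ of $\ZZ_{R'}$, and $\widetilde\ZZ_{R'}\to\ZZ_{R'}$ is the blow-up at the \emph{non-reduced} preimage $g^{-1}(p)$, with ideal $(u^r,x_1,\dots,x_n)$. The paper then factors this single blow-up into a chain of $r$ blow-ups at reduced closed points of successive $D^+(u)$-charts and applies the $r=1$ inequality at each step to accumulate a total drop of $r$. So the paper's argument is purely structural (a factorisation of blow-ups), whereas yours is analytic (explicit elementary-divisor bookkeeping); the paper's route avoids the Case~(A)/Case~(B) split and the Smith-normal-form estimate entirely.

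Your Case~(B) contains a concrete error. You assert that when some $e_{i_0}<r$ the lift $\tilde a$ specializes into the smooth locus of $\widetilde\ZZ$, and you say this can be confirmed in the nodal examples $XY=t^n$. Take $n=2$, $r=2$, $R'=R[u]/(u^2-t)$, and $a\colon X\mapsto u,\ Y\mapsto u^3$. Then $e_1=v'(u)=1<2=r$, so you are in Case~(B). The lift $\tilde a$ lands in the $X$-chart $\Spec R[X,S]/(XS-t)$ at $X\mapsto u,\ S\mapsto u$, i.e.\ at the thickness-$1$ node; this point is regular but \emph{not} $R$-smooth, and a direct computation gives $\delta(\tilde a)=1$, not $0$. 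So your ``alternatively verify directly'' claim is false even in the prototypical nodal case, and the Jacobian-combination argument you offer as the primary route in Case~(B) is not spelled out. (This example in fact violates the stated inequality $\delta(\tilde a)\le\max(\delta(a)-r,0)=0$; note however that the only use of the proposition, in Theorem~\ref{extending}, needs merely that the lift eventually lands in the \emph{regular} locus, which does happen here.)

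In Case~(A) you already flag the elementary-divisor bookkeeping as the principal obstacle, and rightly so: when every $d_\alpha=1$ your rescaling formula gives $\tilde J=J$, so the mechanism you describe cannot by itself produce any drop. Some additional input is needed before this line of argument can close.
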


\begin{proof}
For $R'=R$, Proposition \ref{lemmadimerda} is a particular case of \citep{BLR} 3.6/3. The strategy of the proof is to reduce to this case.

Denote by $t$ a uniformizer for $R$, and by $u$ a uniformizer for $R'$, with $u^r=t$ in $R'$. Since $\ZZ(R')=\ZZ_{R'}(R')$ the section $a$ can be interpreted as a section $b\in \ZZ_{R'}(R')$. Because $\Omega^1_{\ZZ_{R'}/R'}\cong \Omega^1_{\ZZ/R}\otimes_RR'$, we have $\delta(a)=\delta(b)$. The flat map $f\colon R\ra R'$ induces a cartesian diagram
\begin{center}
$\xymatrix{
\widetilde{\ZZ}_{R'} \ar[r]\ar[d]^{\pi_{R'}} & \widetilde{\ZZ} \ar[d]^{\pi}\\
\ZZ_{R'} \ar[r]^{g} & \ZZ &
}$
\end{center}
where $\pi_{R'}\colon\widetilde{\ZZ}_{R'}\ra \ZZ_{R'}$ is the blowing-up of the preimage $g^{-1}(p)$ of $p$ via $g\colon \ZZ_{R'}\ra \ZZ$.
Then the lifting $\widetilde{a}\in \ZZ(R')$ factors via the unique lifting of $b$ to $\widetilde{b}\in \widetilde{\ZZ}_{R'}(R')$. All we need to prove is that $\delta(\widetilde{b})\leq \max\{\delta(b)-r,0\}$.
We may work locally around $p$, and assume $\ZZ=\Spec A$ for some $R$-algebra $A$, and write $\ZZ_{R'}=\Spec B$ with $B=A\otimes_RR'$. We let $(t,x_1,\ldots,x_n)\subset A$ be the maximal ideal corresponding to $p$. The ideal of the closed subscheme $g^{-1}(p)\subset \ZZ_{R'}=\Spec B$ is then $I=(u^r,x_1,\ldots,x_n)\subset B$, so in particular $g^{-1}(p)$ is non-reduced for $r>1$.

We want to decompose the blowing-up $\pi_{R'}\colon \widetilde{\ZZ}_{R'}\rightarrow \ZZ_{R'}$ into a chain of $r$ blowing-ups and then apply to each of these the known case described in the beginning. We construct the chain as follows: we first blow up the ideal $I_1=(u,x_1,\ldots,x_n)\subset B$ and obtain a blowing-up map $\ZZ_1\rightarrow \ZZ_{R'}$. The scheme $\ZZ_1$ is a closed subscheme of $\mathbb P^{n}_B$, whose defining homogeneous ideal is the kernel of the map of graded $B$-algebras
$$B[u^{(1)},x_1^{(1)},\ldots,x_n^{(1)}]\rightarrow \oplus_{d\geq 0} I_1^d $$ 
given by sending $u^{(1)}$ to $u$ and $x_i^{(1)}$ to $x_i$ for all $i=1,\ldots,n$. The locus $D^+(u^{(1)})\subset \ZZ_1$ where $u^{(1)}$ does not vanish is affine, and we denote it by $\Y_1$. We blow up its closed subscheme given by the ideal $(u,x_1^{(1)}/u^{(1)},x_2^{(1)}/u^{(1)},\ldots,x_n^{(1)}/u^{(1)})$, and obtain a map
$$\ZZ_2\rightarrow \Y_1.$$
Next we consider the affine $\Y_2:=D^+(u^{(2)})\subset \ZZ_2$ and reiterating the procedure $r$ times, we end up with a chain of morphisms
$$\Y_r\rightarrow \Y_{r-1}\rightarrow \ldots \rightarrow \Y_1\rightarrow \ZZ_{R'}$$
of affine schemes.
Let's now relate this chain of maps to the blowing-up $\widetilde{\ZZ}_{R'}\rightarrow \ZZ_{R'}
$ given by the ideal $(u^r,x_1,\ldots,x_n)$. Combining the relations $$\frac{x_i^{(j-1)}}{u^{(j-1)}}u^{(j)}=ux_i^{(j)}$$ for all $j=1,\ldots,r$ (where we also set $x_i^{(0)}:=x_i$ and $u^{(0)}:=u$), we obtain in $\Y_r$ the equality
$$x_i=\frac{x_{i}^{(r)}}{u^{(r)}} u^r$$
for all $i=1,\ldots,n$. Hence the ideal sheaf $(u^r,x_1,\ldots,x_n)$ on ${\ZZ}_{R'}$ has preimage in $\Y_r$ which is free of rank $1$, generated by $u^r$. By the universal property of blowing-up we obtain a unique map $\alpha\colon\Y_r\rightarrow \widetilde{\ZZ}_{R'}$ such that the diagram 
\begin{center}
$ 
\xymatrix{
 & \widetilde{\ZZ}_{R'} \ar[d] \\
\Y_r \ar[ur]^{\alpha} \ar[r] & \ZZ_{R'} 
}
$
\end{center}
commutes. Next, we focus on the blow-up map $\widetilde{\ZZ}_{R'}\ra \ZZ_R$. The scheme $\widetilde{\ZZ}_{R'}$ is a closed subscheme of $\mathbb P^{n}_B$, whose defining homogeneous ideal is the kernel of the map of graded $B$-algebras
$$B[v,y_1,\ldots,y_n]\rightarrow \oplus_{d\geq 0} I^d $$ 
given by sending $v$ to $u^r$ and $y_i$ to $x_i$ for all $i=1,\ldots,n$. So we have relations $vx_i=u^ry_i$ for all $i=1,\ldots,n$. Then the map $\alpha^*\colon \O_{\widetilde{\X}_{R'}}\ra \O_{\Y_r}$ sends $y_i$ to $x_i^{(r)}$ and $v$ to $u^{(r)}$. We restrict our attention to the open affine $\Y\subset \widetilde{\ZZ}_{R'}$ where $v$ does not vanish. Since $v$ is mapped by $\alpha^*$ to $u^{(r)}$, which does not vanish on $\Y_r$, the map $\alpha$ factors as a map $\alpha'\colon \mathcal Y_r\ra \Y$ followed by the inclusion $\mathcal Y \subset \widetilde{\ZZ}_{R'}$. Now we produce an inverse to $\alpha'$. One checks that the ideal sheaf $(u,x_1,\ldots,x_n)$ of $\ZZ_{R'}$ becomes free in $\Y$ (generated by $u$), hence we obtain a unique map $\Y\rightarrow \Y_1$ compatible with the maps to $\ZZ_{R'}$. Then the argument can be reiterated to produce a commutative diagram 
\begin{center}
$ 
\xymatrix{
 &&&& \Y \ar[d] \ar[dllll] \ar[dlll] \ar[dl]\\
\Y_r  \ar[r] & \Y_{r-1}  \ar[r] & \ldots \ar[r] & \Y_1  \ar[r] & \ZZ_{R'} 
}
$
\end{center} 

In particular we obtain a map $\beta\colon \mathcal Y\ra \mathcal Y_r$. 
It is an easy check that the maps $\alpha'$ and $\beta$ produced between $\Y$ and $\Y_r$ are inverse one to another, hence they give an isomorphism $\Y_r\rightarrow \Y$. If we let  $b_i$ be the unique lift to $\Y_i$ of $b_0:=b\colon R'\rightarrow \ZZ_{R'}$, we obtain by \citep{BLR} 3.3/5, $\delta(b_i)\leq \min\{\delta(b_{i-1})-1,0\}$ for all $1\leq i \leq r$. Hence $\delta(\widetilde b)\leq \min\{\delta(b)-r,0\}$ as desired.

\end{proof}

We now have the tools to prove our main result on extending line bundles to blowing-ups in the chain of morphisms \eqref{chainn}.
\begin{theorem} \label{extending}
Let $L$ be a line bundle on $\X_K$. Let $(\pi_i\colon\X_i\ra \X_{i-1})_i$ be the chain \eqref{chainn} of blow-ups. Then there exists $N\geq 0$ for which $L$ extends to a line bundle $\L$ on $\X_N$. 

\end{theorem}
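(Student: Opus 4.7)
The plan is to reduce the problem to extending $\mathcal O_{\X_K}(p)$ for a single closed point $p$ of the smooth locus $\X_K^{\smooth}$, and then to iterate Proposition \ref{lemmadimerda} on the section of $\X \ra S$ obtained from $p$ by properness. First I would note that $\X_K^{\smooth}$ is a dense open in $\X_K$ (a nodal curve is generically smooth), so a standard moving argument provides a presentation $L \cong \mathcal O_{\X_K}(D_1)\otimes \mathcal O_{\X_K}(nH)^{-1}$ with $D_1$ and an ample effective divisor $H$ both supported in $\X_K^{\smooth}$. It therefore suffices to extend $\mathcal O_{\X_K}(p)$ for each closed point $p$ in $\Supp(D_1)\cup\Supp(H)$ and to take the maximum of the resulting indices.

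Fix such a $p$, with residue field $K':=k(p)$ finite (and automatically separable, since $p$ lies in the smooth locus) over $K$, and let $R'$ be the integral closure of $R$ in $K'$: a semilocal Dedekind ring finite over $R$ with maximal ideals $\m_1,\ldots,\m_s$, whose localizations $R'_{\m_j}$ are DVRs flat over $R$ with ramification indices $r_j\geq 1$. By the valuative criterion applied to the proper morphism $\X\ra S$, the inclusion $\Spec K'\ra \X_K$ extends uniquely to morphisms $a_j\colon\Spec R'_{\m_j}\ra \X$ whose generic fibres factor through $\X_K^{\smooth}$; hence $\delta(a_j)<\infty$ by Lemma \ref{nerondefectzero}. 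For each $n\geq 0$, the morphism $\X_{n+1}\ra \X_n$ is proper and an isomorphism over the generic fibre, so the valuative criterion supplies a unique lift $a_j^{(n)}\colon\Spec R'_{\m_j}\ra \X_n$ of $a_j$. If the image of the closed point of $\Spec R'_{\m_j}$ under $a_j^{(n)}$ is non-regular in $\X_n$, then it belongs to the blow-up center $Y_n$, and locally around that point the center consists of the single reduced closed point; Proposition \ref{lemmadimerda} then yields
\[
\delta\bigl(a_j^{(n+1)}\bigr)\;\leq\;\max\bigl(\delta\bigl(a_j^{(n)}\bigr)-r_j,\;0\bigr).
\]
After at most $\lceil \delta(a_j)/r_j\rceil$ such steps $\delta$ becomes zero, and Lemma \ref{nerondefectzero} ensures that $a_j^{(n)}$ factors through the smooth locus $\X_n^{\smooth}$; in particular, its closed-point image is a regular point of $\X_n$.

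Let $N_p$ dominate the bounds so obtained for $j=1,\ldots,s$. The reduced closure $\ol p$ of $p$ in $\X_{N_p}$ is an integral $1$-dimensional closed subscheme meeting the special fibre only at regular points of $\X_{N_p}$, and by smoothness of $\X\ra S$ at $p$ it also passes through a regular point of $\X_{N_p}$ on the generic fibre. Since Weil and Cartier divisors coincide on the regular locus of a Noetherian scheme (regular local rings being UFDs), $\ol p$ is a Cartier divisor on $\X_{N_p}$, so $\mathcal O_{\X_{N_p}}(\ol p)$ restricts to $\mathcal O_{\X_K}(p)$. Setting $N$ to the maximum of the $N_p$ over the finitely many $p$ in the supports of $D_1$ and $H$ completes the proof.

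The chief technical obstacle is handling closed points $p\in \X_K$ whose residue field $K'/K$ is nontrivial and possibly ramified: the smoothening procedure has to be carried out along sections defined over ramified DVR extensions of $R$, which is precisely why Proposition \ref{lemmadimerda} was formulated for arbitrary local flat extensions of DVRs rather than only for the \'etale ones treated in \citep{BLR}.
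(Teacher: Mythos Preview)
Your proof is correct and follows essentially the same route as the paper's: move the divisor into the smooth locus of $\X_K$, extend each closed point to sections over localizations of the integral closure via properness, and iterate Proposition~\ref{lemmadimerda} until these sections land in the regular locus, where the Weil closure is Cartier. Two small inaccuracies worth flagging (neither affects the argument): separability of $k(p)/K$ does \emph{not} follow from $p$ lying in the smooth locus (e.g.\ the point $x^p-t$ on $\mathbb{A}^1_{\mathbb{F}_p(t)}$) and is nowhere used; and $\delta$ need not actually reach $0$, since the iteration may terminate when the closed-point image becomes a regular but non-smooth point (a thickness-$1$ node reached by a ramified section), which is already enough for the Cartier conclusion---the paper phrases the dichotomy as ``either $\delta$ drops or the image is already in $\X^{reg}$''.
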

\begin{proof}
Let $L$ be an invertible sheaf on $\X_K$, and $D$ be a Cartier divisor with $\O_{\X_K}(D)\cong L$. We may take $D$ to be supported on the smooth locus of $\X_K$ (\citep{shaf}, Theorem 1.3.1) and see it as a Weil divisor. We may also assume that $D$ is effective, since any Weil divisor is the difference of two effective Weil divisors.  

The closed subscheme $D_{red}$ given by the support of $D$ with its reduced structure is a disjoint union of finitely many closed points of the smooth locus of $\X_K$. We write 
$$D_{red}=\bigcup_{i=1}^s P_i$$
where $P_i\in \X^{sm}_K(K_i)$ for finite extensions $K\hookrightarrow K_i$, $i=1,\ldots,s$. For each $i=1,\ldots,s$, we let $R_i$ be the localization at some prime of the integral closure of $R$ in $K_i$, so that each $R_i$ is a discrete valuation ring with fraction field $K_i$. The curve $\X/R$ being proper, each $P_i$ extends to $Q_i\in \X(R_i)$. Write $\X^{nsm}$ for the non-smooth locus of $\X/R$ and $\X^{nreg}$ for the non-regular locus of $\X$. Notice that $\delta(Q_i)>0$ if and only if $Q_i\cap \X_k\in \X^{nsm}$, by Lemma \ref{nerondefectzero}. Assume that the point $Q_i\cap \X_k$ lies in $\X^{nreg}\sub \X^{nsm}$. In this case, it is one of the closed points that are the center of the blowing-up $\X_1\ra \X$. By Proposition \ref{lemmadimerda}, the unique lifting $Q'_i$ of $Q_i$ to $\X_1$ satisfies $\delta(Q'_i)\leq \max(0,\delta(Q_i)-r_i)$, where $r_i\geq 1$ is the ramification index of $R\ra R_i$. Letting $Q_i^{(n)}$ be the lifting of $Q_i$ to $\X_n$, we see that either $\delta(Q_i^{(n)})\leq \delta(Q_i^{(n-1)})-r_i$ or $Q_i^{(n-1)}\subset (\X^{sm}\subset) \X^{reg}$. Hence, for $N$ big enough, each of the points $P_i\in \X_K$ extends to $Q_i^{(N)}\in\X^{reg}_N(R_i)$. Therefore the Weil divisor $D$ extends to a Weil divisor $\widetilde{D}$ on $\X_N$ that is supported on the union of the $Q_i^{(N)}$, hence on the regular locus of $\X_N$. This implies that $\widetilde{D}$ is a Cartier divisor, and the line bundle $\O_{\X_n}(\widetilde{D})$ restricts to $\O_{\X_K}(D)\cong L$ on $\X_K$. This completes the proof.

\end{proof}

\section{Descent of line bundles along blowing-ups} \label{s4}
\begin{lemma} \label{pushf}
Let $S$ be a trait and $\pi\colon \mathcal Y \ra \mathcal X$ a proper morphism of flat $S$-schemes, which restricts to an isomorphism over the generic point of $S$. Assume that the special fibre $\X_k$ is reduced. Then $\pi_*\O_{\mathcal Y}\cong \O_{\X}$.
\end{lemma}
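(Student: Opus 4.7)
The natural $\O_\X$-algebra map $\phi\colon \O_\X \to \pi_*\O_\Y$ is an isomorphism iff it is injective and surjective, which I would treat separately. Injectivity is the easy half: by flat base change along $\Spec K\to S$ and the fact that $\pi_K$ is an isomorphism, the kernel of $\phi$ is an ideal sheaf of $\O_\X$ supported on the special fibre $\X_k$, hence killed by a power of $t$. But $\X/S$ is flat, so $\O_\X$ is $t$-torsion-free, and the kernel must vanish.

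For surjectivity, I would reduce to a purely algebraic statement. Properness of $\pi$ (and local Noetherianity of $\X$) makes $\pi_*\O_\Y$ a coherent $\O_\X$-algebra, so the question is local on $\X$. Working on an affine $\X=\Spec A$, write $B:=\Gamma(\X,\pi_*\O_\Y)$; this is a module-finite $A$-algebra, the inclusion $A\hookrightarrow B$ is $R$-flat on both sides (flatness of $\Y/S$ passes to the direct image), and $A\otimes_RK = B\otimes_RK$. Given $b\in B$, let $n\geq 0$ be the smallest integer with $a:=t^nb \in A$; the goal is to force $n=0$.

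Suppose for contradiction $n\geq 1$, so $a\in tB$. The crucial input is that $B$ is module-finite, hence integral, over $A$: $b$ satisfies a monic equation $b^d+c_{d-1}b^{d-1}+\cdots+c_0=0$ with $c_i\in A$. Multiplying by $t^{nd}$ gives
\[
a^d + t^n c_{d-1} a^{d-1}+ t^{2n}c_{d-2}a^{d-2}+\cdots + t^{nd}c_0 = 0,
\]
so $a^d \in tA$. This is where the hypothesis $\X_k$ reduced enters: $A/tA$ is reduced, so $a^d\in tA$ forces $a\in tA$, i.e.\ $a=ta'$ with $a'\in A$. Then $t^nb = ta'$ in $B$, and $t$-torsion-freeness of $B$ yields $t^{n-1}b = a'\in A$, contradicting minimality of $n$. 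Hence $n=0$, $b\in A$, and $\phi$ is surjective.

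The only non-formal step is the combination of integrality of $B/A$ with reducedness of $A/tA$ to obtain $tB\cap A = tA$; everything else (injectivity, coherence, reduction to the affine setting) is standard. I would expect this algebraic step to be the crux of the write-up.
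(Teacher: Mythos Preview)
Your proposal is correct and follows essentially the same approach as the paper: both reduce to an affine open, use that $\pi_*\O_\Y$ is a finite (hence integral) $\O_\X$-algebra which agrees with $\O_\X$ after inverting $t$, and then run the minimal-$n$ argument together with reducedness of $A/tA$ to force $n=0$. The only cosmetic difference is that the paper phrases the surjectivity step as ``$\O_\X(W)$ is integrally closed in $\O_\X(W)[t^{-1}]$'', whereas you work directly with elements of $B$; and your remark that ``flatness of $\Y/S$ passes to the direct image'' is really the (immediate) statement that $t$-torsion-freeness is preserved under the left-exact functor $\pi_*$.
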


\begin{proof}

Consider an affine open $W\sub \X$. The morphism $\O_{\X}(W)\ra \pi_*\O_{\mathcal Y}(W)$ is integral (\citep{liu}, Prop.3.3.18). Denoting by $t$ a uniformizer of $\Gamma(S,\O_S)$, we have a commutative diagram 

\begin{center}
$ \xymatrix{
\O_{\X}(W)\ar[r]\ar[d] & \pi_*\O_{\Y}(W)\ar[d] \\
\O_{\X}(W)[t^{-1}] \ar[r]^{\cong} & (\pi_*\O_{\Y}(W))[t^{-1}]
}
$
\end{center}
The two vertical arrows are injective because $\mathcal X$ and $\mathcal Y$ are $S$-flat; the lower arrow is an isomorphism because $\pi$ is generically an isomorphism and $(\pi_*\O_{\Y}(W))[t^{-1}]=\pi_*(\O_{\Y}(W)[t^{-1}])$. It follows that the upper arrow is injective. We claim that $\O_{\X}(W)$ is integrally closed in $\O_{\X}(W)[t^{-1}]$, so that the upper arrow is an isomorphism, which proves the lemma. Take then $g\in \O_{\X}(W)[t^{-1}]$ satisfying a monic polynomial equation $g^m+a_1g^{m-1}+\ldots+a_m=0$ with coefficients in $\O_{\X}(W)$ and write $g=f/t^n$ with $f\in \O_{\X}(W)$ and $n\geq 0$ minimal. We want to show that $n$ is zero. We have 
$$\frac{f^{m}}{t^{nm}}+a_1\frac{f^{m-1}}{t^{n(m-1)}}+\ldots+a_m=0.$$ Suppose by contradiction $n\geq 1$. Upon multiplying by $t^{nm}$ the above relation, we find that $f^m\in t\O_{\X}(W)$. Because the special fibre of $\X$ is reduced, the ring $\O_{\X}(W)/t\O_{X}(W)$ is reduced, hence $f \in t\O_{\X}(W)$. This violates the hypothesis of minimality of $n$ and we have a contradiction. Hence $n=0$ and $g\in \O_{\X}(W)$, proving the claim.

\end{proof}

\begin{proposition} \label{formalfunction}
Hypotheses as in Lemma \ref{pushf}. Let $\mathcal L$ be a line bundle on $\Y$ such that its restriction to every irreducible component of the exceptional fibre of $\pi$ has degree zero. Then $\pi_*\mathcal L$ is a line bundle on $\X$. 
\end{proposition}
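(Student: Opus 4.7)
The claim is local on $\X$, so I fix a closed point $x \in \X$ and show that $(\pi_*\L)_x$ is free of rank one over $\O_{\X, x}$. If $\pi$ is an isomorphism over a neighbourhood of $x$ there is nothing to prove, so assume $x$ lies below the exceptional fibre. Since $\pi$ is proper, $\pi_*\L$ is coherent, and hence $(\pi_*\L)_x$ is finitely generated over the Noetherian local ring $\O_{\X,x}$; a finitely generated module over such a ring is free of rank one if and only if its $\m_x$-adic completion is (flatness descends under faithfully flat extensions, and a finitely generated flat module over a local ring is free), so it suffices to prove the corresponding statement after completing. With $A := \widehat{\O}_{\X, x}$ and $\Y_A := \Y \times_{\X} \Spec A$, the theorem on formal functions identifies
\[(\pi_*\L)_x \otimes_{\O_{\X,x}} A \;\cong\; \varprojlim_n H^0(E_n, \L|_{E_n}),\]
where $(E_n)_n$ is any cofinal system of infinitesimal thickenings of $E := \pi^{-1}(x)_{\textnormal{red}}$ in $\Y_A$. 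The strategy is to prove that this inverse system consists of free rank-one modules over the corresponding Artinian quotients of $A$, with surjective transition maps.

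For the base case, Section \ref{blup} describes $E$ as a chain of copies of $\P^1_{k(x)}$ meeting transversally at $k(x)$-rational nodes; in particular, its dual graph is a tree. Because any degree-zero line bundle on $\P^1$ is trivial and the tree-shaped dual graph presents no cycle obstruction to glueing the component-wise trivializations across the nodes, the hypothesis forces $\L|_E \cong \O_E$; a \v{C}ech calculation on the tree then gives $H^0(E, \O_E) = k(x)$ and $H^1(E, \O_E) = 0$. For the inductive step, the kernel of $\L|_{E_{n+1}} \to \L|_{E_n}$ is a coherent sheaf supported on $E$ whose $H^1$ vanishes (the graded pieces of the ideal filtration are extensions of sheaves whose restriction to each component of $E$ has degree bounded below on $\P^1$, and the tree structure kills all non-local obstructions), so restriction on global sections is surjective; a Nakayama-type lift of the generator from the previous step produces compatible generators of each $H^0(E_n, \L|_{E_n})$. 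Passing to the limit finishes the completed-stalk calculation.

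\textbf{Main obstacle.} The crucial geometric input is the tree-of-$\P^1$s structure of the exceptional fibre. Without it, the degree-zero hypothesis would not trivialize $\L|_E$ (cycles in the dual graph would create non-trivial multi-degree-zero line bundles) and $H^1$ would in general not vanish, breaking both the base case and the inductive lifting. This is the ``genus zero'' fact flagged in the introduction, and it is precisely what couples the geometric content of this proposition to the combinatorial circuit-coprime condition driving the rest of the paper. A secondary care point is the descent from freeness of the completed stalk back to freeness of $(\pi_*\L)_x$ itself, but this is a routine application of faithfully flat descent for finite modules over a Noetherian local ring.
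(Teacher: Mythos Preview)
Your proposal follows the same architecture as the paper's proof: reduce to a local computation at a point below the exceptional fibre, apply the formal function theorem, and exploit the tree-of-$\P^1$'s structure of $E$ to control the line bundle on infinitesimal thickenings. Two points deserve attention.

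First, you invoke Section~\ref{blup} to describe $E$ as a chain of $\P^1_{k(x)}$'s, but that section assumes the special fibre has split singularities, which is not part of the hypotheses of Lemma~\ref{pushf}. The paper handles this by first passing to the strict henselization of the base (where split singularities are automatic) and then descending by flat base change at the very end. You should insert this reduction; it is routine but not optional.

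Second, and more substantively, your inductive step is incomplete as written. Showing that $H^0(E_{n+1},\L)\to H^0(E_n,\L)$ is surjective and then lifting a generator by Nakayama only shows that each $H^0(E_n,\L)$ is \emph{cyclic} over $A_n=\O_p/\m_p^n$, not that it is free of rank one. The paper closes this gap differently: it proves by induction that $\L|_{E_n}$ is actually \emph{trivial} (using the exact sequence $0\to \O_{E}(n)\to \O_{E_{n+1}}^\times\to \O_{E_n}^\times\to 1$ and the vanishing $H^1(E,\O_E(n))=0$ to show $\Pic(E_{n+1})\to\Pic(E_n)$ is an isomorphism), and then identifies $\varprojlim H^0(E_n,\O_{E_n})$ with $\widehat{\O}_p$ via a second application of the formal function theorem together with Lemma~\ref{pushf}. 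Your route can be repaired: the section you lift from $H^0(E,\O_E)=k(x)$ is nowhere vanishing on $E$, so by Nakayama on the cokernel sheaf it trivializes $\L|_{E_n}$ as a sheaf, recovering the paper's conclusion; but you then still need Lemma~\ref{pushf} (or an equivalent length count) to identify the limit with $\widehat{\O}_p$. As stated, your sketch hides precisely the step where the previous lemma enters.
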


\begin{proof}
We first consider the case where $S$ is the spectrum of a strictly henselian discrete valuation ring. In this case, the special fibre of $\X\ra S$ has split singularities, hence, as seen in subsection \ref{blup}, the exceptional fibre $E$ of $\mathcal Y\ra \mathcal X$ consists either of a projective line, or of two projective lines meeting at a $k$-rational node.

The sheaf $\pi_*\mathcal L$ is a coherent $\O_{\X}$-module. Since the curve $\X$ is reduced, to show that $\pi_*\mathcal L$ is a line bundle it is enough to check that $\dim_{k(x)}\pi_*\mathcal L\otimes_{\O_{\X}} k(x)=1$ for all $x\in \X$. This clearly holds for $x\in \X$ different from $p$. We remain with the case $x=p$. Denote by $\O_p$ the local ring of $\X$ at $p$. Let 
$$\mathcal Z:=\Y\times_{\X}\Spec \mathcal O_p$$
so that $\mathcal Z$ is the blow-up of $\Spec \O_p$ at its closed point. We write $\mathcal I$ for the ideal sheaf $\m_p\O_{\mathcal Z}\subset \O_{\mathcal Z}$. For every $n\geq 1$ define
$$\mathcal Z_n:=\Y\times_{\X}\Spec \O_{p}/\m_p^n$$ 
so we have $\O_{\mathcal Z_n}=\O_{\mathcal Z}/\mathcal I^n$.
In particular, $\mathcal Z_1$ is the exceptional fibre of the blowing-up $\mathcal Z\ra \Spec \O_p$, which coincides with the exceptional fibre $E$ of $\pi\colon\mathcal Y\ra \mathcal X$. 

The formal function theorem tells us that there is a natural isomorphism
$$\Phi\colon\lim_n (\pi_*\mathcal L) \otimes_{\mathcal O_{\X}}\mathcal O_p/\m_p^n \ra \lim_n H^0(\mathcal Z_n,\mathcal L_{|\mathcal{Z}_n}).$$

We claim that $\mathcal L_{|\mathcal Z_n}$ is trivial for all $n\geq 1$. We start with the case $n=1$: the dual graph of the curve $\mathcal Z_1$ is a tree, hence $\Pic(\mathcal Z_1)$ is the product of the Picard groups of the components of $\mathcal Z_1$ (this can be checked via the Mayer-Vietoris sequence for $\mathcal O^{\times}$, for example). In other words, a line bundle on $\mathcal Z_1$ is determined by its restrictions to the components of $\mathcal Z_1$. As $\Pic(\mathbb P^1_k)=\mathbb Z$ via the degree map, we have $\mathcal L_{|\mathcal Z_1}=\mathcal O_{\mathcal Z_1}$.
Now let $n\geq 1$ and assume that $\mathcal L_{|\mathcal Z_n}$ is trivial. There is an exact sequence of sheaves of groups on $\mathcal Z$
$$0\ra \mathcal I^n/\mathcal I^{n+1} \ra (\O_{\mathcal Z}/\mathcal I^{n+1})^{\times} \ra (\O_{\mathcal Z}/\mathcal I^n)^{\times} \ra 1$$
with the first map sending $\alpha$ to $1+\alpha$. The ideal sheaf $\mathcal I$ is canonically isomorphic to the invertible sheaf $\mathcal O_{\mathcal Z}(1)$. Hence $\mathcal I^n/\mathcal I^{n+1}=\mathcal O_{\mathcal Z_1}(n)$. Taking the long exact sequence of cohomology we obtain 
$$H^1(\mathcal Z_1,\O_{\mathcal Z_1}(n))\ra H^1(\mathcal Z_{n+1},\O_{\mathcal Z_{n+1}}^{\times})\ra H^1(\mathcal Z_{n},\O_{\mathcal Z_{n}}^{\times})\ra 0.$$
We find that the term $H^1(\mathcal Z_1,\O_{\mathcal Z_1}(n))$ vanishes using Mayer-Vietoris exact sequence and the fact that $H^1(\mathbb P^1_k,\mathcal O_{\mathbb P^1_k}(n))=0$. It follows that the restriction map 
$\Pic(\mathcal Z_{n+1})\ra \Pic(\mathcal Z_n)$ is an isomorphism. Since the sheaf $\mathcal L_{|\mathcal Z_{n+1}}$ restricts to the trivial sheaf on $\mathcal Z_n$, it is itself trivial, establishing the claim.

We obtain
$$\lim_n H^0(\mathcal Z_n,\mathcal L_{|\mathcal Z_n})\cong \lim_n H^0(\mathcal Z_n,\O_{\mathcal Z_n})\cong\lim_n(\pi_*\mathcal O_{\mathcal Y})\otimes_{\O_{\X}}\O_p/\m_p^n\cong \widehat{\O}_p$$
the second isomorphism coming again from the formal function theorem applied to $\mathcal O_{\mathcal Y}$ and the third coming from Lemma \ref{pushf}.
Finally, we obtain by composition with $\Phi$ an isomorphism
$$\lim_n(\pi_*\mathcal L)\otimes_{\O} \O_p/\m_p^n\ra \widehat{\O}_p$$ which induces an isomorphism $\pi_*\mathcal L\otimes_{\O}\O_p/\m_p \ra \O_p/\m_p=k(p)$, as desired.

Now we drop the assumption of strict henselianity on the base, so let $S$ be the spectrum of a discrete valuation ring. Let $S'$ be the \'etale local ring of $S$ with respect to some separable closure of the residue field of $S$. The cartesian diagram

\begin{center}
$
\xymatrix{
\mathcal Y_{S'} \ar[r]^{f}\ar[d]^{\pi '} & \mathcal Y \ar[d]^{\pi} \\
\mathcal X_{S'} \ar[r]^g & \mathcal X 
}
$
\end{center}
has faithfully flat horizontal arrows, and $\mathcal Y_{S'}\rightarrow \mathcal X_{S'}$ is the blowing-up at $g^{-1}(p)$. Let $\mathcal L$ be a line bundle on $\mathcal Y$ as in the hypotheses. The restrictions of $f^*\mathcal L$ to the irreducible components of the exceptional fibre of $\pi '$ have degree zero, hence $\pi'_*f^*\mathcal L$ is a line bundle. Moreover the canonical map
$$g^*\pi_*\mathcal L \rightarrow \pi'_*f^*\mathcal L$$
is an isomorphism, because $g$ is flat. Hence $g^*\pi_*\mathcal L$ is a line bundle, and so is $\pi_*\mathcal L$ by faithful flatness of $g$.
\end{proof}

\section{Graph theory} \label{sectiongraphtheory}
In this section we develop some graph-theoretic results that, together with the results of sections \ref{sectionextending} and \ref{s4}, will be needed to prove Theorem \ref{mainthm}.  

\subsection{Labelled graphs}
Let $G=(V,E)$ be a connected, finite graph. For the whole of this section, we will just write ``graph'' to mean finite, connected graph. A \textit{circuit} in $G$ is a closed walk in $G$ all of whose edges and vertices are distinct except for the first and last vertex. A \textit{path} is an open walk all of whose edges and vertices are distinct. 

A \textit{tree} of $G$ is a connected subgraph $T\subset G$ containing no circuit. A \textit{spanning tree} of $G$ is a tree of $G$ containing all of the vertices of $G$, that is, a maximal tree of $G$. Given a spanning tree $T\sub G$, we call \textit{links} the edges not belonging to $T$.

Let $n=|E|$, $m=|V|$. Given a spanning tree $T$, the number of links of $T$ is easily seen to be $n-m+1$. The number $$r:=n-m+1$$ is called \textit{nullity} of $G$ and is equal to the first Betti number $\rk H^1(G,\Z)$. 

Fix a spanning tree $T\subset G$. For each link $c_1,\ldots,c_r$ of $T$, the subgraph $T\cup c_i$ contains exactly one circuit $C_i\subset G$. We call $C_1,\ldots,C_r$ \textit{fundamental circuits} of $G$ (with respect to $T$).

Let $(G,l)=(V,E,l)$ be the datum of a graph and of a labelling of the edges $l\colon E\ra \Z_{\geq 1}$ by positive integers. We say that $(G,l)$ is a \textit{$\N$-labelled graph}.

\subsection{Circuit matrices} \label{sectioncircuitmatrices}
Given a graph $G$, let $e_1,e_2,\ldots,e_n$ be its edges and $\gamma_1,\ldots,\gamma_s$ its circuits. Fix an arbitrary orientation of the edges of $G$, and an orientation of each circuit (that is, one of the two travelling directions on the closed walk). 

\begin{definition}
The \textit{circuit matrix} of $G$ is the $s\times n$ matrix $M_G$ whose entries $a_{ij}$ are defined as follows:
$$
a_{ij}=\begin{cases}
0 & \mbox{ if the edge } e_j \mbox{ is not in } \gamma_i;\\ 
1 & \mbox{ if the edge } e_j \mbox{ is in } \gamma_i \mbox{ and its orientation agrees }\\
& \mbox{ with the orientation of } \gamma_i;\\
-1 & \mbox{ if the edge } e_j \mbox{ is in } \gamma_i \mbox{ and its orientation does not agree } \\
& \mbox{ with the orientation of } \gamma_i.\\
\end{cases}$$
\end{definition}
Hence every row of $M_G$ corresponds to a circuit of $G$ and each column to an edge.

Now fix a spanning tree of $G$. Let $c_1,\ldots,c_r$ be the corresponding links, where $r$ is the nullity of $G$, and $C_1,\ldots,C_r$ the associated fundamental circuits. Consider the $r\times n$ submatrix $N_G$ of $M_G$ given by singling out the rows 
corresponding to fundamental circuits.
One can reorder edges and circuits so that the $j$-th column corresponds to the link $c_j$ for $1\leq j \leq r$ and that the $i$-th row 
corresponds to the circuit $C_i$. If we also choose the orientation of every fundamental circuit $C_i$ so that it agrees with the orientation of the link $c_i$, the matrix $N_G$ has the form
$$N_G=[\mathbb{I}_r|N']$$
where $\mathbb{I}_r$ is the identity $r\times r$-matrix and $N'$ is an integer matrix.

\begin{definition}
The matrix $N_G$ constructed above is called the \textit{fundamental circuit matrix} of $G$ (with respect to the spanning tree $T$).
\end{definition}

It is clear that $N_G$ has rank $r$. 
\begin{theorem}[\citep{TS}, Theorem 6.7.] \label{rankMGNG}
The rank of $M_G$ is equal to the rank of $N_G$.
\end{theorem}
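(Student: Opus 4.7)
The plan is to interpret both matrices homologically and identify $N_G$ as a basis of a vector space in which all rows of $M_G$ live. Fix the orientation on edges used to define $M_G$, and consider the boundary map $\partial\colon \Z^E\to \Z^V$ sending an oriented edge $e$ to the difference of its head and tail vertices. The key observation is that the row of $M_G$ corresponding to any circuit $\gamma$ of $G$ is an element of $\ker\partial$: as one traverses $\gamma$ in its chosen orientation, each vertex is entered and exited exactly once, so the contributions to the boundary cancel. Thus every row of $M_G$ lies in the cycle space $Z:=\ker\partial\sub \Q^E$.

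Next I would compute the dimension of $Z$. Since $G$ is connected, the image of $\partial$ is the subspace of $\Q^V$ of vectors whose coordinates sum to zero, so $\dim \im \partial = m-1$, giving $\dim Z = n - (m-1) = r$. Now fix the spanning tree $T$ and look at the $r$ rows of $N_G$ associated with the fundamental circuits $C_1,\ldots,C_r$. Each $C_i$ uses exactly one link, namely $c_i$, and the link $c_j$ appears in no $C_i$ for $i\ne j$. Hence the first $r$ columns of $N_G$ form the identity matrix $\bb{I}_r$, which immediately shows that the rows of $N_G$ are linearly independent. Since they lie in the $r$-dimensional space $Z$, they form a basis of $Z$.

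Combining these two steps, every row of $M_G$, being an element of $Z$, is a $\Q$-linear combination of the rows of $N_G$. This gives $\rk M_G \le \rk N_G$, and the reverse inequality is obvious because $N_G$ is a submatrix of $M_G$. Hence $\rk M_G = \rk N_G = r$.

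The argument is essentially routine once one recognises that the content of the theorem is the standard fact that fundamental circuits form a basis of $H_1(G,\Q)$. The only place one has to be a little careful is checking that the sign conventions in the definition of $M_G$ match those of the simplicial boundary map, so that circuit rows genuinely lie in $\ker\partial$; this is where the alignment of circuit orientation with edge orientation in the definition of the entries $a_{ij}$ plays its role. I do not anticipate a substantive obstacle beyond bookkeeping.
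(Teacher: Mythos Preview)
Your argument is correct and is the standard homological proof: circuit rows lie in the cycle space $\ker\partial$, which has dimension $r$; the fundamental circuits are independent because of the $\mathbb{I}_r$ block, hence span $\ker\partial$; so every row of $M_G$ is a combination of rows of $N_G$.

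Note, however, that the paper does not actually prove this statement. It is quoted as Theorem 6.7 from the cited graph-theory reference \citep{TS} and used as a black box; no argument is supplied in the paper itself. So there is no ``paper's proof'' to compare against. Your write-up is a self-contained justification of a result the paper simply imports, and it would serve perfectly well if one wished to make the paper independent of that citation. The only cosmetic remark is that, since the paper later uses the integral fact that every row of $M_{(G,l)}$ is a $\mathbb{Z}$-linear combination of rows of $N_{(G,l)}$ (to conclude that the Smith normal forms agree), you might observe that your basis argument already works over $\mathbb{Z}$: the $\mathbb{I}_r$ block shows the rows of $N_G$ generate a saturated sublattice of $\ker\partial\cap\mathbb{Z}^E$, so every integral cycle, and in particular every row of $M_G$, is an integer combination of them.
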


Let now $(G,l)$ be an $\N$-labelled graph. We generalize the definitions above to this case.
\begin{definition}
The \textit{labelled circuit matrix} of $(G,l)$ is the $s\times n$ matrix $M_{(G,l)}$ whose entries $b_{ij}$ are defined as follows:
$$
b_{ij}=\begin{cases}
0 & \mbox{ if the edge } e_j \mbox{ is not in } \gamma_i;\\ 
l(e_j) & \mbox{ if the edge } e_j \mbox{ is in } \gamma_i \mbox{ and its orientation agrees }\\
& \mbox{with the orientation of } \gamma_i;\\
-l(e_j) & \mbox{ if the edge } e_j \mbox{ is in } \gamma_i \mbox{ and its orientation does not agree }\\
& \mbox{ with the orientation of } \gamma_i.\\
\end{cases}$$

The \textit{labelled fundamental circuit (lfc) matrix} of $(G,l)$ is the $r\times n$ matrix $N_{(G,l)}$ constructed from $M_{(G,l)}$ by taking only the rows corresponding to fundamental circuits with respect to a given spanning tree $T$. 
\end{definition}
We immediately see that 
$$M_{(G,l)}=M_G\cdot L \mbox{ and } N_{(G,l)}=N_G\cdot L $$
where $L$ is the diagonal square matrix of order $n$ whose $(i,i)$-th entry is $l(e_i)$. 

\begin{example}
Consider the $\mathbb N$-labelled graph $(G,l)$ with oriented edges in Figure \ref{fig:graph}.

\begin{figure}[H]  
\centering

\begin{tikzpicture}[->,>=stealth',shorten >=1pt,auto,node distance=2cm,main node/.style={circle,draw,fill=black,inner sep=1.5,minimum size=0.4pt},every loop/.style={min distance=7mm}]

  \node[main node] (1) {};
  \node[main node] (2) [below left of=1] {};
  \node[main node] (3) [below right of=2] {};
  \node[main node] (4) [below right of=1] {};

  \path[every node/.style={font=\sffamily\small}]
    (1) edge node [left] {2} (4)
             
    (2) edge node [right] {3} (1)
        edge node {6} (4)       
    (3) edge node [right] {10} (2)
    (4) edge node [left] {15} (3)
       ;
\end{tikzpicture} 
\caption{An oriented $\N$-labelled graph $(G,l)$ \label{fig:graph}}

\end{figure}
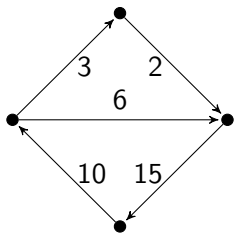

We assign to each of its three circuits the clockwise travelling direction. We obtain a circuit matrix of $G$ and a labelled circuit matrix of $(G,l)$:
\vspace{0.5cm}

\begin{minipage}[b]{0.5\textwidth}
$M_G=\begin{bmatrix}
1 & 1 & -1 & 0 & 0\\
0 & 0 & 1 & 1 & 1\\
1 & 1 & 0 & 1 & 1
\end{bmatrix} $
\end{minipage}
\begin{minipage}[b]{0.5\textwidth}
$M_{(G,l)}=\begin{bmatrix}
3 & 2 & -6 & 0 & 0\\
0 & 0 & 6 & 15 & 10\\
3 & 2 & 0 & 15 & 10
\end{bmatrix} $
\end{minipage}

\vspace{0.5cm}
Choose the spanning tree with edges labelled by $3, 6$ and $10$. The fundamental circuit matrix of $G$ and lfc-matrix of $(G,l)$ are obtained from $M_G$ and $M_{(G,l)}$ by removing the third row:
\vspace{0.5cm}

\begin{minipage}[b]{0.5\textwidth}
$N_G=\begin{bmatrix}
1 & 1 & -1 & 0 & 0\\
0 & 0 & 1 & 1 & 1
\end{bmatrix} $
\end{minipage}
\begin{minipage}[b]{0.5\textwidth}
$N_{(G,l)}=\begin{bmatrix}
3 & 2 & -6 & 0 & 0\\
0 & 0 & 6 & 15 & 10
\end{bmatrix} $
\end{minipage}

\end{example}

\vspace{1cm}

Let $M$ be an integer-valued matrix with $a$ rows and $b$ columns. There exist matrices $A\in \GL(a,\Z)$ and $B\in \GL(b,\Z)$ such that $$
AMB=\begin{bmatrix}
d_1    & 0   & 0      &     & \dots & & 0 \\
0      & d_2 & 0      &     & \dots & & 0 \\
0      & 0   & \ddots &     &       & & 0 \\
\vdots &     &        & d_k &       & & \vdots \\
       &     &        &     & 0     & &   \\
       &     &        &     &       &\ddots & \\
0      &     &        &\dots&       & & 0 \\ 
\end{bmatrix}
$$
where the diagonal entries satisfy $d_i|d_{i+1}$ for $i=1,\ldots,k-1$. This is the so-called \textit{Smith normal form} of $M$ and it is unique up to multiplication of the diagonal entries by units of $\Z$. For $1\leq i \leq k$, the integer $d_i$ is the quotient $D_i/D_{i-1}$, where $D_i$ equals the greatest common divisor of all minors of order $i$ of $M$.

Going back to the matrices $M_{(G,l)}$ and its submatrix $N_{(G,l)}$, it follows from Theorem \ref{rankMGNG} that their Smith normal forms both have rank equal to the nullity $r$ of the graph $G$. Besides, as any row of $M_{(G,l)}$ is a $\mathbb Z$-linear combination of rows of $N_{(G,l)}$, we see that the numbers $D_i$ defined above are the same for the two matrices. It follows that $M_{(G,l)}$ and $N_{(G,l)}$ have the same non-zero numbers $d_i$ appearing on the diagonal. Moreover, the numbers $d_1,\ldots,d_r$ are defined up to multiplication by $-1$, hence do not depend on the choices of orientation of edges or circuits, but only on the $\N$-labelled graph $(G,l)$.

\subsection{Cartier labellings and blow-up graphs} \label{subsCLBG}
Let $(G,l)$ be an $\N$-labelled graph. Let $\Z^{V}$ be the free abelian group generated by the set of vertices $V$. Any element $\phi$ of $\Z^V$ can be interpreted as a vertex labelling $\phi\colon V\ra \Z$ of the graph $G$.
\begin{definition} \label{cartierlabel}
An element $\phi\in \Z^V$ is a \textit{Cartier vertex labelling} if for every edge $e\in E$ with endpoints $v,w\in V$, $l(e)$ divides $\phi(v)-\phi(w)$. 
\end{definition} 
We denote by $\mathcal C\subset \Z^V$ the subgroup of Cartier vertex labellings. 
\begin{definition} \label{defdelta}
We call \textit{multidegree operator} the group homomorphism $\delta\colon \mathcal C \ra  \Z^V$ which sends $\varphi\in \mathcal C$ to
$$
v\mapsto \sum_{\substack{\mbox{\scriptsize edges }  e \\ \mbox{\scriptsize  incident to }v }} \frac{\phi(w)-\phi(v)}{l(e)} 
$$
where $w$ denotes the other endpoint of $e$ (which is $v$ itself if $e$ is a loop).
\end{definition}

\begin{lemma}\label{kernelZ} 
The kernel of $\delta$ consists of the constant vertex labellings, hence there is an exact sequence
$$0\ra \Z\ra \mathcal C \xrightarrow{\delta} \Z^V.$$
\end{lemma}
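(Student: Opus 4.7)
The plan is to prove the two inclusions separately. The first is trivial: if $\phi$ is the constant labelling $\phi\equiv n$ for some $n\in\Z$, then $\phi(v)-\phi(w)=0$ is divisible by every $l(e)$, so $\phi\in\mathcal C$; moreover every summand in the definition of $\delta(\phi)(v)$ vanishes, so $\delta(\phi)=0$. This shows the map $\Z\to\mathcal C$ sending $n$ to the constant labelling $n$ is well-defined and lands in $\ker\delta$, and it is obviously injective.

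For the converse, I would argue by a discrete maximum principle. Suppose $\phi\in\mathcal C$ satisfies $\delta(\phi)=0$. Since $V$ is finite, choose a vertex $v_0\in V$ at which $\phi$ attains its maximum value. For every edge $e$ incident to $v_0$, letting $w$ denote the other endpoint, we have $\phi(w)-\phi(v_0)\le 0$ and $l(e)>0$, so each summand in
\[
\delta(\phi)(v_0)=\sum_{\substack{e\text{ incident}\\ \text{to }v_0}}\frac{\phi(w)-\phi(v_0)}{l(e)}
\]
is nonpositive. As the total sum is zero, every summand must vanish, forcing $\phi(w)=\phi(v_0)$ for each neighbour $w$ of $v_0$. (Loops at $v_0$ contribute $0$ automatically and cause no trouble.)

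Finally, connectedness of $G$ closes the argument: each neighbour $w$ of $v_0$ also attains the maximum of $\phi$, so the same reasoning propagates the equality to the neighbours of $w$, and inductively to every vertex of $G$. Hence $\phi$ is constant, which shows $\ker\delta\subseteq\Z$. Since no step requires more than the finiteness and connectedness of $G$ and positivity of $l$, I do not expect any real obstacle; the only point worth flagging is the handling of loops and of multi-edges, both of which are harmless because every term in the Laplacian sum has the correct sign.
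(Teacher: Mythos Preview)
Your proof is correct and follows essentially the same approach as the paper: both directions are handled identically, with the nontrivial inclusion proved via the discrete maximum principle (pick a vertex where $\phi$ is maximal, use nonpositivity of each summand in $\delta(\phi)(v_0)$ to force equality with neighbours, then propagate by connectedness). Your write-up simply spells out in more detail what the paper leaves implicit.
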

\begin{proof}Any constant vertex labelling is in the kernel of $\delta$. Conversely, let $\phi\in \ker\delta$ and let $v\in V$ be a vertex where $\phi$ attains its maximum. Then for all the vertices $w$ adjacent to $v$ one has $\phi(w)=\phi(v)$. Since the graph is finite and connected, one can repeat the argument and find that $\phi$ is a constant labelling. 
\end{proof}
\begin{remark}
When the edge-labelling $l\colon E\ra \Z_{\geq 1}$ is constant with value $1$, the multidegree operator $\delta$ coincides with the Laplacian operator of the graph $G$.
\end{remark}

\begin{definition}
Given an $\N$-labelled graph $(G,l)=(V,E,l)$ we define the \textit{total blow-up graph} $(\til G, \til l)=(\til V,\til E,\til l)$ to be the $\N$-labelled graph constructed as follows starting from $(G,l)$: every edge $e\in E$ is replaced by a path consisting of $l(e)$ edges, and $\til l\colon \til E\ra \Z$ is set to be the constant labelling with value $1$. 

\begin{example} Figure \ref{graphandblowup} shows an $\mathbb N$-labelled graph (\subref{graph1}) and its total blow-up graph (\subref{graph2}).
\begin{figure}[H]
 \centering
 \begin{subfigure}[b]{0.3\textwidth}
\begin{tikzpicture}[>=stealth',shorten >=1pt,auto,node distance=2cm,main node/.style={circle,draw,fill=black,inner sep=1.5,minimum size=0.4pt},every loop/.style={min distance=7mm}]

  \node[main node] (1) {};
  \node[main node] (2) [right of=1] {};

  \path[every node/.style={font=\sffamily\small}]    
      (1) edge node {2} (2)
      edge [loop left] node {2} (1)  
      edge [bend right=100]  node {1} (2)
      edge [bend left=100] node  {3} (2)
      (2) edge [loop right] node {2} (2)
       ;
\end{tikzpicture}

\caption{  \label{graph1} }
\end{subfigure}
\begin{subfigure}[b]{0.3\textwidth}
\begin{tikzpicture}[>=stealth',shorten >=1pt,auto,node distance=1cm,main node/.style={circle,draw,fill=black,inner sep=1.5,minimum size=0.4pt},every loop/.style={min distance=7mm}]

  \node[main node] (1) {};
  \node[main node] (2) [right of=1] {};
  \node[main node] (3) [right of=2] {};
  \node[main node] (4) [right of=3] {};
  \node[main node] (5) [above right=1.5cm of 2, xshift=-0.7cm] {};
  \node[main node] (6) [above left=1.5cm of 4, xshift=0.7cm] {};
  \node[main node] (7) [right of=4] {};

  \path[every node/.style={font=\sffamily\small}]
  (1) edge [bend left=50] node {1} (2)
  (1) edge [bend right=50] node [below] {1} (2)
  (2) edge [bend right=100] node {1} (4)
  (2) edge node {1} (3)
  (2) edge [bend left=20] node {1} (5)
  (3) edge node {1} (4)
  (5) edge [bend left=20] node {1} (6)
  (6) edge [bend left=20] node {1} (4)
  (4) edge [bend left=50] node {1} (7)
  (4) edge [bend right=50] node [below] {1} (7)
  ;
\end{tikzpicture}
\caption{ \label{graph2}}
\end{subfigure}
\caption{An $\N$-labelled graph $G$ (\subref{graph1}) and its total blow-up graph $\til G$ (\subref{graph2}). \label{graphandblowup}}
\end{figure}

\end{example}

We call \textit{old vertices} the vertices in the image of the  inclusion map $V\hookrightarrow \til V$. We call \textit{new vertices} the remaining vertices.
\end{definition}

Notice that every new vertex is incident to exactly two edges, and belongs to a unique path (corresponding to some edge $e\in E$) connecting two old vertices of $\til V$. Just as before we consider the group of Cartier vertex labellings $\til {\mathcal C}$ of $(\til G, \til l)$, and the multidegree operator $\til \delta\colon \til {\mathcal C}\ra \Z^{\til V}$. 

We obtain a morphism of exact sequences

\begin{gather}
\begin{aligned}
 \xymatrix{
0\ar[r] & \Z\ar[r]\ar[d]^{\id} & \mathcal C \ar[r]^{\delta}\ar[d]^{\iota} & \Z^V \ar[d]^{\epsilon} \\
0 \ar[r] & \Z\ar[r] & \til{\mathcal C} \ar[r]^{\til \delta} & \Z^{\til V} \\
} 
\end{aligned}
\label{diagramCartier}
\end{gather}

The map $\epsilon\colon \Z^V\ra \Z^{\til V}$ is given by extending vertex-labellings by zero on the set of new vertices. The map $\iota\colon \mathcal C\ra \til{\mathcal C}$ sends a Cartier vertex labelling $\phi$ on $G$ to the Cartier vertex labelling $\iota(\phi)$ on $\til G$ whose value at old vertices in inherited by $\varphi$, and extended by linear interpolation to the new vertices. More precisely: if $e$ is an edge of $G$ with endpoints $v,w$ which is replaced in $\til G$ by a path consisting of vertices $v=v_0,v_1,\ldots,v_{l(e)}=w$, we set for each $k=0,\ldots,l(e)$ 
$$\iota(\phi)(v_k)= \frac{(l(e)-k)\phi(v)+k\phi(w)}{l(e)}.$$ The Cartier condition on $\varphi$ implies that this labelling takes integer values.

Let $H=\coker \delta$, $\til H=\coker \til\delta$. The commutative diagram above yields a group homomorphism $\overline{\epsilon}\colon H\ra \til H$. 
\begin{lemma} \label{injection}
The group homomorphism $\overline{\epsilon}\colon H\ra \til{H}$ is injective.
\end{lemma}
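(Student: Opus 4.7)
The task is to show that if $\psi \in \Z^V$ satisfies $\epsilon(\psi) = \til\delta(\til\phi)$ for some $\til\phi \in \til{\mathcal{C}} = \Z^{\til V}$ (recall that $\til l \equiv 1$, so every vertex labelling of $\til G$ is automatically Cartier), then $\psi$ lies in the image of $\delta$. The natural candidate is $\phi := \til\phi|_V$, the restriction of $\til\phi$ to the old vertices.

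The crucial step is to exploit that $\til\delta(\til\phi)$ vanishes at every new vertex, since $\epsilon(\psi)$ is supported on the old vertices. Each new vertex $u$ sits as an internal vertex $v_k$ of a unique path $v = v_0, v_1, \ldots, v_{l(e)} = w$ in $\til G$ replacing some edge $e \in E$ with endpoints $v, w \in V$. Since $v_k$ has only the two neighbours $v_{k-1}$ and $v_{k+1}$ in $\til G$ and all $\til l$-values are $1$, the equation $\til\delta(\til\phi)(v_k) = 0$ reduces to
$$\til\phi(v_{k-1}) - 2\til\phi(v_k) + \til\phi(v_{k+1}) = 0.$$
Hence $\til\phi(v_0), \til\phi(v_1), \ldots, \til\phi(v_{l(e)})$ form an arithmetic progression; writing $c_e$ for its common difference gives $\til\phi(w) - \til\phi(v) = l(e)\, c_e$, so $l(e) \mid \phi(w) - \phi(v)$. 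Thus $\phi$ lies in $\mathcal{C}$, and $c_e = (\phi(w) - \phi(v))/l(e)$.

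It remains to compare $\til\delta(\til\phi)$ and $\delta(\phi)$ at old vertices. The edges of $\til G$ incident to an old vertex $v \in V$ are precisely the initial segments (at $v$) of the paths in $\til G$ replacing edges of $G$ incident to $v$; each such segment contributes exactly $c_e = (\phi(w) - \phi(v))/l(e)$ to $\til\delta(\til\phi)(v)$, which matches the contribution of $e$ to $\delta(\phi)(v)$. Summing over all edges incident to $v$ yields $\til\delta(\til\phi)(v) = \delta(\phi)(v)$; the case of a loop $e$ at $v$ needs only a brief separate check, where both ends of the corresponding path in $\til G$ attach at $v$, the linearity along the closed path forces $c_e = 0$, and both contributions vanish. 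Since $\til\delta(\til\phi)(v) = \epsilon(\psi)(v) = \psi(v)$, one concludes $\delta(\phi) = \psi$, proving injectivity of $\overline{\epsilon}$.

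The main obstacle is the first step: once one knows $\til\phi$ is affine along each replaced path, the remainder is a straightforward identification of local contributions. Notably, no combinatorial input about the circuits of $G$ is required for this injectivity statement; the circuit structure will become essential only when analysing the cokernel of $\overline{\epsilon}$, which is presumably where the circuit-coprimality hypothesis enters the main theorem.
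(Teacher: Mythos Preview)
Your proof is correct and follows essentially the same line as the paper's: both observe that the vanishing of $\til\delta(\til\phi)$ at new vertices forces $\til\phi$ to be an arithmetic progression along each path replacing an edge of $G$, so that $\til\phi$ is the interpolation $\iota(\phi)$ of its restriction $\phi$ to old vertices, whence $\phi\in\mathcal C$ and $\delta(\phi)=\psi$. The paper is slightly terser, invoking the commutativity of diagram~\eqref{diagramCartier} together with injectivity of $\epsilon$ in place of your explicit edge-by-edge comparison of $\til\delta(\til\phi)(v)$ with $\delta(\phi)(v)$; your treatment of loops is a nice explicit check that the paper leaves implicit.
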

\begin{proof}
Let $\alpha\in\Z^{V}$ be a vertex labelling and let $\epsilon(\alpha)\in \Z^{\til V}$ be its extension by zero. Assume that there exists a Cartier vertex labelling $\til \phi\in \til{\mathcal C}$ such that $\epsilon(\alpha)=\til\delta(\til \phi)$. Then $\til\delta(\til \phi)$ takes value zero on all new vertices of $\til G$. Hence, if $v$ is a new vertex of $\til G$ adjacent to two verteces $v'$ and $v''$, we have $\til\phi(v')-\til\phi(v)=\til\phi(v)-\til\phi(v'')$. We immediately see that $\til\phi$ is an interpolation of a Cartier vertex labelling $\phi \in \mathcal C$, i.e. $\til \phi$ is in the image of $\iota$.  Since $\epsilon\colon \Z^V\ra \Z^{\til V}$ is injective, $\alpha=\delta(\phi)$.
\end{proof}

Our aim now is to give necessary and sufficient conditions on the $\N$-labelled graph $(G,l)$ for the map $\overline{\epsilon}\colon H\ra \til H$ to be surjective (hence an isomorphism).

\subsection{Circuit-coprime graphs}
\begin{definition}\label{defcc}
Let $(G,l)=(V,E,l)$ be an $\N$-labelled graph. We say that $(G,l)$ is \textit{circuit-coprime} if for every circuit $C\subset G$, $\gcd\{l(e)| e \mbox{ is an edge of } C\}=1.$ 
\end{definition}
\begin{example}
In Figure \ref{excircuitcoprime} the $\mathbb N$-labelled graph (\subref{excc1}) is circuit-coprime, whereas the $\mathbb N$ -labelled graph (\subref{excc2}) is not, as it contains a loop labelled by $3$ in addition to a circuit labelled by $6,10$ and $10$.

\begin{figure}[h]
 \centering
\begin{subfigure}[b]{0.4\textwidth}
\begin{tikzpicture}[>=stealth',shorten >=1pt,auto,node distance=2cm,main node/.style={circle,draw,fill=black,inner sep=1.5,minimum size=0.4pt},every loop/.style={min distance=7mm}]

  \node[main node] (1) {};
  \node[main node] (2) [below left of=1] {};
  \node[main node] (3) [below right of=2] {};
  \node[main node] (4) [below right of=1] {};

  \path[every node/.style={font=\sffamily\small}]
    (1) edge node [left] {2} (4)
             
    (2) edge node [right] {3} (1)
        edge node {6} (4)
        edge [loop left] node {1} (2)
    (3) edge node [right] {10} (2)
    (4) edge node [left] {15} (3)
       ;
\end{tikzpicture} 
\caption{\label{excc1}}
\end{subfigure} \hspace{1.5cm}
\begin{subfigure}[b]{0.4\textwidth}
\begin{tikzpicture}[>=stealth',shorten >=1pt,auto,node distance=2cm,main node/.style={circle,draw,fill=black,inner sep=1.5,minimum size=0.4pt},every loop/.style={min distance=7mm}]

  \node[main node] (1) {};
  \node[main node] (2) [below left of=1] {};
  \node[main node] (3) [below right of=2] {};
  \node[main node] (4) [below right of=1] {};

  \path[every node/.style={font=\sffamily\small}]
    (1) edge node [left] {2} (4)
             
    (2) edge node [right] {3} (1)
        edge node {6} (4)
        edge [loop left] node {3} (2)
    (3) edge node [right] {10} (2)
    (4) edge node [left] {10} (3)
       ;
\end{tikzpicture} 
\caption{\label{excc2}}
\end{subfigure}
\caption{A circuit-coprime $\mathbb N$-labelled graph (\subref{excc1}) and an $\mathbb N$ -labelled graph that is not circuit-coprime (\subref{excc2}).\label{excircuitcoprime}}
\end{figure}
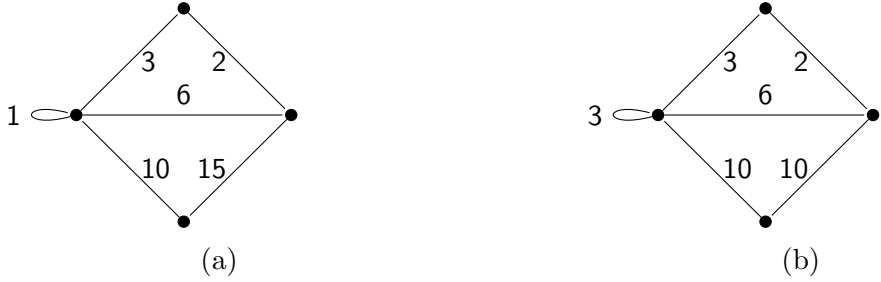
\end{example}

\begin{lemma} \label{SNFcoprime}
Let $(G,l)=(V,E,l)$ be an $\N$-labelled graph. Denote by $r$ its nullity. The Smith normal form of the matrix $M_{(G,l)}$ has diagonal entries $d_1=d_2=\ldots=d_r=1$ if and only if $(G,l)$ is circuit-coprime.
\end{lemma}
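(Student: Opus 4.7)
The plan is first to reduce the statement to a computation of the gcd of the $r\times r$ minors of $M_{(G,l)}$. Since the Smith invariants satisfy $d_1\mid d_2\mid\cdots\mid d_r$, having $d_1=\cdots=d_r=1$ is equivalent to $d_r=1$, and in turn to $D_r=d_1\cdots d_r=1$, where $D_r$ is the gcd of all $r\times r$ minors of $M_{(G,l)}$. (By the discussion preceding the statement, one may equivalently work with $N_{(G,l)}$ instead, since $M_{(G,l)}$ and $N_{(G,l)}$ have the same nonzero $d_i$'s.)

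Next I would evaluate these minors using the factorization $M_{(G,l)}=M_G\cdot L$. An $r\times r$ submatrix obtained by selecting columns $e_{j_1},\ldots,e_{j_r}$ has determinant equal to $l(e_{j_1})\cdots l(e_{j_r})$ times the corresponding minor of $M_G$. The main combinatorial input is the classical result (a consequence of the total unimodularity of the fundamental circuit matrix $N_G=[\mathbb I_r\mid N']$, see \citep{TS}) that the $r\times r$ minors of $M_G$ lie in $\{0,\pm 1\}$ and are nonzero precisely when the $r$ selected edges form the link set $L(T)$ of some spanning tree $T$ of $G$ (equivalently, their complement is a spanning tree). Combining the two, I obtain
$$D_r \;=\; \gcd\Bigl\{\prod_{c\in L(T)} l(c)\;:\;T\text{ a spanning tree of }G\Bigr\}.$$

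From this the rest of the proof becomes a prime-by-prime matroid argument. The condition $D_r=1$ is equivalent to saying that for every prime $p$ there exists a spanning tree $T$ no link of which has label divisible by $p$; setting $F_p=\{e\in E:p\mid l(e)\}$, this amounts to the existence of a spanning tree $T$ with $F_p\subseteq E(T)$. By the matroid exchange property for the graphic matroid of $G$, such a tree exists if and only if $F_p$ is acyclic. Finally, $F_p$ is acyclic for every prime $p$ if and only if no circuit $C$ of $G$ has all its edge labels divisible by a common prime, which is exactly the circuit-coprime condition $\gcd\{l(e):e\in C\}=1$ for every circuit $C$.

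The step I expect to require the most care is the minor description for $M_G$: once one has it, everything else is a clean matroid manipulation. If appealing to total unimodularity of the circuit matrix feels too heavy, a lighter alternative is to work directly with $N_{(G,l)}$: write it as a block matrix $[L_1\mid N'L_2]$, where $L_1$ is diagonal in the link labels, expand any $r\times r$ minor along the columns coming from the $L_1$-block, and reduce to showing that minors of $N'$ lie in $\{0,\pm 1\}$ with nonzero minors indexed by cotrees. This can be verified directly by noting that each row of $N'$ encodes the unique path in the fixed spanning tree joining the endpoints of the corresponding link, so that the combinatorial identification of nonzero minors with cotrees is transparent.
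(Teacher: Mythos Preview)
Your proposal is correct and follows essentially the same approach as the paper: both use the factorization $N_{(G,l)}=N_G\cdot L$, the total unimodularity/cotree characterization of the $r\times r$ minors of $N_G$ (the paper cites \cite[Theorems~6.10 and~6.15]{TS}), and then a prime-by-prime argument showing that the set $F_p=\{e:p\mid l(e)\}$ is acyclic and hence extends to a spanning tree. The only cosmetic difference is that the paper handles the ``not circuit-coprime $\Rightarrow d_r\neq 1$'' direction by a direct row-divisibility argument (choosing a spanning tree making the bad circuit fundamental), whereas you derive both directions uniformly from the formula $D_r=\gcd_T\prod_{c\in L(T)} l(c)$.
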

\begin{proof}
 

Assume first that $(G,l)$ is not circuit-coprime. Let $C$ be a circuit whose labels have greatest common divisor $D\neq 1$. Pick an edge $e$ of $C$. The subgraph $C\setminus e $ is a tree; let $T$ be a spanning tree of $G$ containing it. Then $e$ is a link for $T$, and $C$ is its associated fundamental circuit. The lfc-matrix $N_{(G,l)}$ has a row corresponding to the circuit $C$, hence all entries of this row are divisible by $D$. Then the linear map $f\colon \Z^n\ra \Z^r$ defined by $N_{(G,l)}$ is not surjective; hence the linear map associated to the Smith normal form of $N_{(G,l)}$ is not surjective either. Therefore, some (necessarily non-zero) diagonal entry of the Smith normal form of $N_{(G,l)}$ is different from $\pm 1$. As previously remarked, the Smith normal forms of $M_{(G,l)}$ and $N_{(G,l)}$ have the same non-zero diagonal entries, hence $d_r\neq \pm 1$.


Conversely, assume that $G$ is circuit-coprime. After fixing some spanning tree $T$, consider the lfc-matrix $N_{(G,l)}$. We only need to prove that the diagonal entries of the Smith normal form of $N_{(G,l)}$ are all $1$, which amounts to proving that the greatest common divisor $d$ of the minors of order $r$ of the lfc-matrix $N_{(G,l)}$ is $1$.

As we have seen in  \ref{sectioncircuitmatrices}, we have the relation
$$N_{(G,l)}=N_G\cdot L.$$
Let $N'$ be a maximal square submatrix of $N_{(G,l)}$. Then $N'$ corresponds to $r$ edges of $G$, which we denote $e_{i_1},e_{i_2},\ldots,e_{i_r}$. Let $N''$ be the corresponding square submatrix of $N_G$. We have the relation
$$\det N'=\prod_{j=1}^rl(e_{i_j})\det N''$$ 
By \citep{TS}, Theorem 6.15, all minors of $N_G$ are either $1,0$ or $-1$, hence $\det N''$ is either $1,0$ or $-1$. Moreover, by \citep{TS}, Theorem 6.10, a square submatrix of order $r$ of $N_G$ has determinant $\pm 1$ if and only if the corresponding $r$ edges are the complement of a spanning tree. Hence $\det N'=\pm\prod_{i=1}^rl(e_{i_j})$ if the edges $e_{i_1},e_{i_2},\ldots,e_{i_r}$ form the complement of a spanning tree of $G$, otherwise $\det N'=0$. We claim that 
$$d:=\gcd\{\det N'| N' \mbox{ is an } r\times r \mbox{ square submatrix of } N_{(G,l)}\}=1.$$
Let $p$ be a prime number and denote by $E_p$ the set of edges $e$ of $G$ whose label $l(e)$ is divisible by $p$. Because $(G,l)$ is circuit-coprime, $E_p$ contains no circuit; hence $E_p$ is contained in some spanning tree $T$ of $G$. There are exactly $r$ edges, $e_1,e_2,\ldots, e_r$, that do not belong to $T$. These give a square $r\times r$ submatrix of $N_{(G,l)}$ whose determinant is $\prod_{i=1}^r l(e_i)\not\equiv 0$ (mod $p$), since $e_1,\ldots, e_r\not\in E_p$. Hence $p\nmid d$. It follows that $d=1$; since $d_i|d_{i+1}$ for all $i=1,\ldots, r-1$ and $d_r|d$, we obtain the result. 
\end{proof}

\begin{proposition}
Let $(G,l)=(V,E,l)$ be an $\N$-labelled graph. The group homomorphism $\overline{\epsilon}\colon H\rightarrow \til H$ is an isomorphism if and only if $(G,l)$ is circuit-coprime.
\end{proposition}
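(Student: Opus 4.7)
By Lemma \ref{injection} we already know that $\overline{\epsilon}$ is injective, so only surjectivity is at stake. The key observation is that the blow-up graph $\til G$ has constant edge-labelling $\til l \equiv 1$, so the Cartier condition is vacuous and $\til{\mathcal C}=\Z^{\til V}$; the operator $\til\delta$ is then (up to sign) the combinatorial Laplacian of $\til G$. Writing $\Z^{\til V} = \epsilon(\Z^V)\oplus \Z^{\til V\setminus V}$, my plan is to recast surjectivity of $\overline{\epsilon}$ as the statement that the composition
\[ \Z^{\til V} \xrightarrow{\til\delta} \Z^{\til V} \twoheadrightarrow \Z^{\til V\setminus V} \]
is surjective, then analyse this edge-by-edge to extract a congruence system on $V$, and finally identify its obstruction with $\Z^r / \im N_{(G,l)}$ so that Lemma \ref{SNFcoprime} yields the conclusion.

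For the edgewise analysis I would consider a single $e\in E$ replaced in $\til G$ by a path $v=v_0,v_1,\ldots,v_{l(e)}=w$. Prescribing $\til\delta(\til\phi)(v_k)=\beta_k$ on the interior $1\le k\le l(e)-1$ yields the second-order linear recursion $\til\phi(v_{k+1})-2\til\phi(v_k)+\til\phi(v_{k-1}) = \beta_k$. A direct telescoping gives
\[ \til\phi(w)-\til\phi(v) \;=\; l(e)\,\bigl(\til\phi(v_1)-\til\phi(v_0)\bigr) + \sum_{j=1}^{l(e)-1}(l(e)-j)\,\beta_j, \]
so an integer solution with prescribed integer endpoint values $a,b$ exists precisely when $b-a \equiv c_e(\beta) \pmod{l(e)}$, where $c_e(\beta):=\sum_{j=1}^{l(e)-1}(l(e)-j)\beta_j$. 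Since the coefficient of $\beta_{l(e)-1}$ in $c_e(\beta)$ is $1$, as $\beta$ varies over $\Z^{\til V\setminus V}$ the tuples $(c_e(\beta))_{e\in E}$ sweep out all of $\bigoplus_{e\in E}\Z/l(e)$. Hence the surjectivity to be proved is equivalent to surjectivity of the ``reduced coboundary''
\[ \bar\partial^*\colon \Z^V \longrightarrow \bigoplus_{e\in E}\Z/l(e), \quad f \longmapsto \bigl(f(t(e))-f(s(e))\bmod l(e)\bigr)_{e\in E}. \]

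To pin down $\coker \bar\partial^*$ I would use the short exact sequence $0\to \Z \to \Z^V \xrightarrow{\partial^*} \Z^E \xrightarrow{N_G} \Z^r \to 0$ coming from integral graph cohomology (here $N_G$ is, up to choice of spanning tree, evaluation on a basis of fundamental cycles, i.e.\ the fundamental circuit matrix of $G$). Quotienting $\Z^E$ further by $L(\Z^E)$, where $L=\mathrm{diag}(l(e))_{e\in E}$, gives
\[ \coker\bar\partial^* \;=\; \Z^r\big/(N_G\circ L)(\Z^E) \;=\; \Z^r\big/\im N_{(G,l)}. \]
By Lemma \ref{SNFcoprime} this vanishes if and only if the non-zero diagonal entries of the Smith normal form of $N_{(G,l)}$ are all $1$, i.e.\ if and only if $(G,l)$ is circuit-coprime.

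The main obstacle, I expect, is not conceptual but rather the pathwise recursion producing the class $c_e(\beta)$: keeping track that integer (rather than merely rational) interior solutions exist precisely under the stated congruence, and that $(c_e(\beta))_e$ indeed surjects onto $\bigoplus_e \Z/l(e)$, is what makes the labelling $l$ enter the argument non-trivially. The subsequent identification of $\coker\bar\partial^*$ with $\Z^r/\im N_{(G,l)}$ is then a formality, given the graph-cohomological exact sequence and the (standard) saturation of $\partial^*(\Z^V)$ in $\Z^E$ whose cokernel is torsion-free of rank $r$.
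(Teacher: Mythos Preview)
Your argument is correct and reaches the same endpoint as the paper's proof (surjectivity of $N_{(G,l)}\colon\Z^n\to\Z^r$, hence Lemma~\ref{SNFcoprime}), but the route is organised differently. The paper proceeds element-by-element: it reduces surjectivity of $\overline\epsilon$ to hitting each basis vector $\chi_v$ for a new vertex $v$, further reduces to $v$ adjacent to an old vertex via the relation $k\overline\alpha_1=\overline\alpha_k$ in $\til H$, and then translates the existence of a suitable $\til\varphi$ into the congruence system \eqref{AAA} on $V$, which it rewrites as the linear system $M_{(G,l)}x=b(\overline e)$. Your approach is more global: you recast surjectivity of $\overline\epsilon$ as surjectivity of $\pi\circ\til\delta\colon\Z^{\til V}\to\Z^{\til V\setminus V}$, solve the path recursion once to extract the congruence class $c_e(\beta)$, and then invoke the graph-cohomological exact sequence $0\to\Z\to\Z^V\to\Z^E\to\Z^r\to 0$ to identify $\coker\bar\partial^*$ with $\Z^r/\im N_{(G,l)}$ in one stroke. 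This buys you both implications simultaneously (you compute the obstruction group exactly, rather than arguing separately that non-coprimality forces failure), and it avoids the somewhat ad~hoc reduction to a vertex adjacent to an old one. The paper's version, on the other hand, is more self-contained: it does not appeal to the saturation of $\partial^*(\Z^V)$ in $\Z^E$ or to the identification of the cokernel with cycle space, facts you use but only gesture at as ``standard''. If you submit this, you should state and justify that exact sequence explicitly (e.g.\ $\im\partial^*=\ker N_G$ because both are saturated of the same rank and the containment is clear).
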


\begin{proof}
We already know that $\overline{\epsilon}\colon H\ra \til H$ is injective by Lemma \ref{injection}. It is surjective if and only if for every vertex-labelling $\alpha\in\Z^{\til V}$, there exists $\til \phi\in \til{\mathcal C}$ such that $\til\delta(\til\phi)+\alpha$ is in the image of the extension-by-zero map $\epsilon\colon \Z^V\ra \Z^{\til V}$, i.e. $\til\delta(\til\phi)+\alpha$ is supported on the set of old vertices. We may of course assume that $\alpha$ belongs to the canonical basis of $\Z^{\til V}$. That is, $\alpha=\chi_v$ for some vertex $v$ of $\til G$, where
$$\chi_v(w)=\begin{cases}
1 & \mbox{ if } w=v \\
0 & \mbox{ if } w\neq v.
\end{cases}
$$
If $v$ is an old vertex of $\til G$, $\chi_v$ is an extension by zero of a vertex-labelling on $G$, so we may assume that $v$ is a new vertex. Then $v$ belongs to some path $P\subset \til G$ associated to some edge $\overline e\in E$. Denote by $w_0,w_1,\ldots,w_{l(\overline e)}$ the vertices of the path $P$, so that $w_0$ and $w_{l(\overline e)}$ are old vertices, and the numbering of the indices follows the order of the vertices on the path. For every $i=1,\ldots,l(\overline e)$, let $\alpha_i=\chi_{w_i}-\chi_{w_0}\in\Z^{\til V}$ be the vertex-labelling that has value $1$ at $w_i$, value $-1$ at $w_0$, and value $0$ everywhere else. Then it is easy to check that the images $\overline{\alpha}_i$ of the $\alpha_i$ in $\til H$ satisfy $k\overline{\alpha}_1=\overline{\alpha}_k$ for all $k=1,\ldots,l(\overline e)$. Hence, if $\overline{\alpha_1}$ is in the image of $\overline{\epsilon}\colon H\ra \til H$, so are all the $\overline{\alpha_i}$ for $1\leq i \leq l(e)$.
This shows that we can take $v$ to be equal to $w_1$; hence $\chi_{v}=\chi_{w_1}$ takes value $1$ on a new vertex $v$ adjacent to an old vertex, and value zero at all other vertices. 

We ask whether an element $\til \varphi\in \til{\mathcal C}$ exists such that $\til \delta(\til \varphi)+\chi_{w_1}$ is supported only on the old vertices. In other words, $\til\delta(\til\varphi)$ must be zero on all new vertices except for the vertex $w_1$, where it has to take the value $-1$. This is equivalent to asking that, for every new vertex $z$, adjacent \begin{equation}
\begin{cases}
\til\varphi(z)-\til\varphi(z_1)=\til\varphi(z_2)-\til\varphi(z) & \mbox{ if } z\neq w_1\\ 
(\til\varphi(z_1)-\til\varphi(z))+(\til\varphi(z_2)-\til\varphi(z))= -1 & \mbox{ if } z=w_1
\end{cases}
\end{equation}
holds. 

We claim that such a $\til\varphi$ exists if and only if there exists a vertex-labelling $\varphi$ of the graph $G$, such that, for every edge $e\in E$ with endpoints $v_0,v_1$,

\begin{equation}
\begin{cases} \label{AAA}
\varphi(v_1)-\varphi(v_0) \equiv 0 \mbox{ mod } l(e) & \mbox{ if }
e\neq\overline e \\
\varphi(v_1)-\varphi(v_0) \equiv 1 \mbox{ mod } l(e) & \mbox{ if } e=\overline e, v_0=w_0, v_1=w_{l(e)}
\end{cases}
\end{equation}
where we have identified the old vertices $w_0,w_{l(e)}$ with the corresponding vertices in $G$.
Indeed, given $\til \varphi$ one obtains $\phi$ simply by restriction to old vertices.
Conversely, given a $\phi$ as in \eqref{AAA}, $\til \phi$ is obtained as follows: for an edge $e\neq \overline e$, we define $\til \phi$ on the corresponding path $\{ z_0=v_0, z_1, z_2, \ldots, z_{l(e)}=v_1\}$ by:
$$\forall k=0,1,\ldots,l(e), \;\;\; \til \phi (z_k)=\frac{k\phi(v_1)+(l(e)-k)\phi(v_0)}{l(e)}.$$
On the path $\{w_0=v_0,w_1,\ldots,w_{l(\overline e)}=v_1\}$ corresponding to the edge $\overline e$, we set instead
$$ \til \phi (w_k)=\begin{cases}\frac{k\phi(v_1)+(l(e)-k)(\phi(v_0)+1)}{l(e)} & \mbox{ if }  k\in\{1,2,\ldots,l(\overline e)\}\\
\til \phi(v_0)  & \mbox{ if }  k=0;
\end{cases}
$$
which establishes the claim.

If the graph $G$ is a tree it is clear that such a $\varphi$ can be found. If there are circuits in $G$, the existence of a solution $\varphi$ depends of course on the labels of the circuits. Fix an orientation on $G$, so that we have source and target functions $s,t\colon E\rightarrow V$, and so that $s(\overline e)=w_0$, $t(\overline e)=w_{l(\overline e)}$. Assume that a vertex-labelling $\varphi$ of $G$ satisfying the conditions \eqref{AAA} exists. In particular we have that $\varphi(t(\overline{e}))-\varphi(s(\overline e))\equiv 1 \mbox{ mod } l(\overline e)$. For every edge $e\in E$ let 
$$
x(e):=\begin{cases}
\frac{\varphi(t(e))-\varphi(s(e))}{l(e)}  & \mbox{ if } e\neq \overline e\\
\frac{\varphi(t(e))-\varphi(s(e))-1}{l(e)} & \mbox{ if } e=\overline e
\end{cases}
$$

Let $C\subset G$ be a circuit consisting of 
vertices $v_0,v_1,\ldots,v_s=v_0$ connected by edges $e_0,e_1,e_2,\ldots,e_s=e_0$, so that $e_i$ connects $v_i$ and $v_{i+1}$ for every $i\in \Z/s\Z$. Notice that the increasing numbering gives an orientation to $C$. We have 
$$(\varphi(v_s)-\varphi(v_{s-1}))+(\varphi(v_{s-1})-\varphi(v_{s-2}))+\ldots + (\varphi(v_1)-\varphi(v_s))=0.$$
Setting 
\begin{eqnarray}
a_i=\begin{cases}
1 & \mbox{ if } t(e_i)=v_{i+1}, s(e_i)=v_i \\
-1 & \mbox{ if } t(e_i)=v_{i}, s(e_i)=v_{i+1}
\end{cases}
\end{eqnarray}
for every $i\in \Z/s\Z$, we obtain $$\sum a_ix_{e_i}l(e_i)=0$$
if the edge $\overline e$ does not belong to the circuit $C$, whereas if $\overline e \in C$ we have
$$\sum a_ix_{e_i}l(e_i)=\begin{cases}
-1 & \mbox{ if the orientations of }C\mbox{ and }\overline e\mbox{ agree;}\\
1 & \mbox{ if the orientations of }C\mbox{ and }\overline e\mbox{ do not agree;}

\end{cases}$$

Let $C_1,\ldots,C_m$ be the circuits of $G$. Choose an orientation for each circuit, so that we can form the labelled circuit matrix $M_{(G,l)}$ associated to $G$. We see that the vector $\underline x=(x_1,\ldots,x_n)$ is a solution of 
$$M_{(G,l)} x= b(\overline e)$$
where $ b(\overline e)=(b_1,\ldots,b_m)$ with
$$
b_i=\begin{cases}
0 & \mbox{ if } \overline e\not\in C_i; \\
-1 & \mbox{ if } \overline e\in C_i \mbox{ and the orientation of } \overline e \mbox{ agrees with the } \\
& \mbox{ orientation of } C_i; \\
1 & \mbox{ if } \overline e\in C_i \mbox{ and the orientation of } \overline e \mbox{ does not agree with the } \\
& \mbox{ orientation of } C_i.
\end{cases}
$$

Conversely, a solution $ x\in \Z^n$ to the system $M_{(G,l)} x= b(\overline e)$ yields a vertex labelling $\varphi$ as in \eqref{AAA}. We conclude that the map $\overline\epsilon\colon H\rightarrow \til H$ is surjective if and only if for every edge $e\in E$, there is a solution $ x\in \Z^n$ to 
$$M_{(G,l)} x=b(e).$$
After having chosen a spanning tree $T$ and formed the lfc-matrix $N_{(G,l)}$, this is in turn equivalent to the map $\Z^n\ra \Z^r$ defined by $N_{(G,l)}$ being surjective. Indeed, the set $\{b(e)|e \mbox{ is a link of }T\}$ is a basis for $\Z^r$. Now, $N_{(G,l)}$ is surjective if and only if its Smith normal form (or equivalently the one of $M_{(G,l)}$) has only $1$'s on the diagonal. By Lemma \ref{SNFcoprime}, we conclude.

\end{proof}

\subsection{\texorpdfstring{$\N_{\infty}$}{N}-labelled graphs} 
We want to generalize the results of the previous subsection to labelled graphs whose labels can attain the value $\infty$. Denote by $\N_{\infty}$ the set $\Z_{\geq 1}\cup \{\infty\}$. Let $(G,l)=(V,E,l)$ be the datum of a graph, with set of vertices $V$ and set of edges $E$, and of a function $l\colon E\rightarrow \N_{\infty}$. We say that $(G,l)$ is an \textit{$\N_{\infty}$-labelled graph}. 

The notions of Cartier vertex labelling \ref{cartierlabel} and multidegree operator \ref{defdelta} carry over to this setting without change, imposing that the only integer divisible by $\infty$ is $0$, and setting $\frac{0}{\infty}=0$ in the definition of multidegree operator. In particular, if a vertex-labelling on $(G,l)$ is Cartier, it attains the same value at the two extremal vertices of an edge with label $\infty$.

\begin{definition} \label{blowupgraph}
Given an $\N_{\infty}$-labelled graph $(G,l)=(V,E,l)$ we define the \textit{first-blow-up graph} $G_1=(V_1,E_1,l_1)$ to be the $\N_{\infty}$-labelled graph constructed as follows starting from $(G,l)$: every edge $e\in E$ with $l(e)=1$ is preserved unaltered; every edge $e\in E$ with $l(e)\geq 2$ is replaced by a path consisting of an edge labelled by $1$, followed by an edge labelled by $l(e)-2$ (which could equal $0$ or $\infty$), followed by an edge labelled by $1$.

We define inductively for every integer $n\geq 1$ the \textit{$n$-th blow-up graph} $G_n=(V_n,E_n,l_n)$ as the first-blow-up graph of $G_{n-1}$.
\end{definition}

\begin{example}
Figure \ref{blowupgraphs} shows an $\mathbb N_{\infty}$-labelled graph (\subref{0th}) with its first (\subref{1st}) and second (\subref{2nd}) blow-up graphs. 
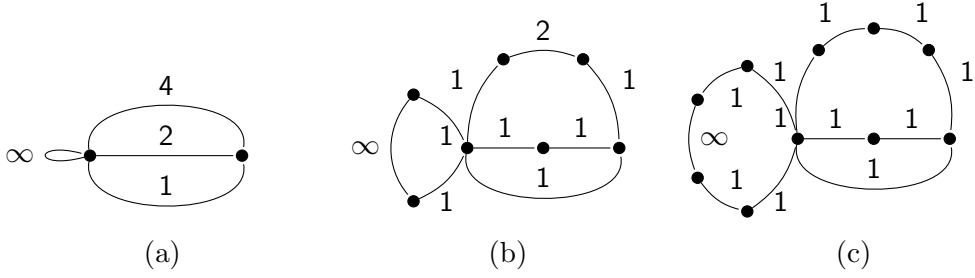
\begin{figure}[h]
 \centering
 \begin{subfigure}[b]{0.3\textwidth}
\begin{tikzpicture}[>=stealth',shorten >=1pt,auto,node distance=2cm,main node/.style={circle,draw,fill=black,inner sep=1.5,minimum size=0.4pt},every loop/.style={min distance=7mm}]

  \node[main node] (1) {};
  \node[main node] (2) [right of=1] {};

  \path[every node/.style={font=\sffamily\small}]    
      (1) edge node {2} (2)
      edge [loop left] node {$\infty $} (1)  
      edge [bend right=100]  node {1} (2)
      edge [bend left=100] node  {4} (2)
       ;
\end{tikzpicture}

\caption{\label{0th}}
\end{subfigure}
\begin{subfigure}[b]{0.3\textwidth}
\begin{tikzpicture}[>=stealth',shorten >=1pt,auto,node distance=1cm,main node/.style={circle,draw,fill=black,inner sep=1.5,minimum size=0.4pt},every loop/.style={min distance=7mm}]

  \node[main node] (2)  {};
  \node[main node] (0) [above left of =2] {};
  \node[main node] (1) [below left of =2] {};
  
  \node[main node] (3) [right of=2] {};
  \node[main node] (4) [right of=3] {};
  \node[main node] (5) [above right=1.5cm of 2, xshift=-0.7cm] {};
  \node[main node] (6) [above left=1.5cm of 4, xshift=0.7cm] {};

  \path[every node/.style={font=\sffamily\small}]

  (0) edge [bend left=20] node [below] {1} (2)
  (0) edge [bend right=40] node [left] {$\infty$} (1)
  (1) edge [bend right=20] node [below] {1} (2)
  (2) edge [bend right=100] node {1} (4)
  (2) edge node {1} (3)
  (2) edge [bend left=20] node {1} (5)
  (3) edge node {1} (4)
  (5) edge [bend left=20] node {2} (6)
  (6) edge [bend left=20] node {1} (4)
  ;
\end{tikzpicture}
\caption{\label{1st}}
\end{subfigure}
\begin{subfigure}[b]{0.3\textwidth}
\begin{tikzpicture}[>=stealth',shorten >=1pt,auto,node distance=1cm,main node/.style={circle,draw,fill=black,inner sep=1.5,minimum size=0.4pt},every loop/.style={min distance=7mm}]

  \node[main node] (b) [above left=1.7cm of 2, yshift=-0.8cm] {};
  \node[main node] (c) [below left=1.7cm of 2, yshift=0.8cm] {};
  \node[main node] (2)  {};
  \node[main node] (0) [above left=1.2cm of 2, xshift=0.3cm] {};
  \node[main node] (1) [below left=1.2cm of 2, xshift=0.3cm] {};
  
  \node[main node] (3) [right of=2] {};
  \node[main node] (4) [right of=3] {};
  \node[main node] (5) [above right=1.5cm of 2, xshift=-0.9cm] {};
  \node[main node] (a) [above=1.3cm of 3] {};
  \node[main node] (6) [above left=1.5cm of 4, xshift=0.9cm] {};

  \path[every node/.style={font=\sffamily\small}]
  
  (0) edge [bend right=20] node {1} (b)
  (c) edge [bend right=20] node {1} (1)
  (b) edge [bend right=20] node {$\infty $} (c)
  (0) edge [bend left=20] node [below] {1} (2)
  (1) edge [bend right=20] node [below] {1} (2)
  (2) edge [bend right=100] node {1} (4)
  (2) edge node {1} (3)
  (2) edge [bend left=20] node {1} (5)
  (3) edge node {1} (4)
  (5) edge [bend left=20] node {1} (a)
  (a) edge [bend left=20] node {1} (6)
  (6) edge [bend left=20] node {1} (4)
  ;
\end{tikzpicture}
\caption{\label{2nd}}
\end{subfigure}
\caption{An $\mathbb N_{\infty}$-labelled graph (\subref{0th}) with its first (\subref{1st}) and second (\subref{2nd}) blow-up graphs \label{blowupgraphs}}
\end{figure}

\end{example}

Denote by $\mathcal C_n$ the group of Cartier vertex-labellings on $(G_n,l_n)$. Just as in \eqref{diagramCartier}, we obtain a commutative diagram
$$
\centering
\xymatrix{
\mathcal C \ar[r]^{\delta} \ar[d]^{\iota_1} & \Z^V \ar[d]^{\epsilon_1} \\
\mathcal C_1 \ar[r]^{\delta_1} \ar[d]^{\iota_2} & \Z^{V_1} \ar[d]^{\epsilon_2} \\
\vdots  \ar[d] & \vdots \ar[d] \\
\mathcal C_n \ar[r]^{\delta_n} \ar[d]^{\iota_n} & \Z^{V_n} \ar[d]^{\epsilon_n} \\
\vdots & \vdots 
}
$$
The vertical maps $\epsilon_j$ are once again extension by zero; the maps $\iota_j$ are defined as follows: if $e$ is an edge of $G_{j-1}$ which is replaced in $G_j$ by a path consisting of vertices $v_0=v, v_1,v_2,v_3=w$ (with possibly $v_1=v_2$, if $l_{j-1}(e)=2$), and $\phi$ is Cartier vertex labelling on $G_{j-1}$, we set $\iota_j(\phi)$ to take the value $\phi(v)$ at $v_0$, $\frac{(l(e)-1)\phi(v)+\phi(w)}{l(e)}$ at $v_1$, $\frac{\phi(v)+(l(e)-1)\phi(w)}{l(e)}$ at $v_2$, $\phi(w)$ at  $v_3$. 
The diagram above gives rise to a chain of group homomorphisms
\begin{equation} \label{chainH}
H\ra H_1\ra H_2\ra \ldots \ra H_n\ra \ldots
\end{equation}
between the cokernels of the rows. Each map of the chain \eqref{chainH} is injective; we ask whether they are all isomorphisms, i.e. under which conditions
\begin{equation} \label{HH}
H\ra \colim H_i
\end{equation}
is an isomorphism.
\begin{definition} \label{circuitcoprimeinfty}
Let $(G,l)=(V,E,l)$ be an $\N_{\infty}$-labelled graph. Let $ (G_c,l_c)=(V_c, E_c,  l_c)$ be the labelled graph obtained by contracting all edges $e$ with $l(e)=\infty$. 

We say that $(G,l)$ is \textit{circuit-coprime} if $(G_c,l_c)$ is circuit-coprime as in Definition \ref{defcc}.
\end{definition}

\begin{proposition} \label{inftygraphs}
Let $(G,l)$ be an $\N_{\infty}$-labelled graph. The map \eqref{HH} is an isomorphism if and only if $(G,l)$ is circuit-coprime.
\end{proposition}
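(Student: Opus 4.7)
The plan is to reduce this $\N_\infty$-labelled case to the preceding proposition applied to the $\N$-labelled contracted graph $(G_c, l_c)$, by building natural identifications $H \cong H_c$ and $\colim_i H_i \cong \widetilde H_c$ that are compatible with the canonical map $H \to \colim_i H_i$, where $\widetilde H_c$ denotes the cokernel of the multidegree operator on the total blow-up $\widetilde G_c$.

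For the first identification, I will exploit that every Cartier vertex labelling on $G$ must take equal values at the two endpoints of any $\infty$-labelled edge. Writing $p\colon V \to V_c$ for the vertex-contraction map, this forces the pullback $p^{*}\colon \mathcal{C}_c \to \mathcal{C}$ to be an isomorphism. Since $\infty$-labelled edges contribute trivially to the multidegree operator $\delta$, a direct computation yields $p_{*} \circ \delta \circ p^{*} = \delta_c$, where $p_{*}\colon \Z^V \to \Z^{V_c}$ denotes summation over fibres of $p$: contributions from $\infty$-edges vanish, and summing finite-edge contributions over a fibre reproduces the analogous sum in $G_c$. Passing to cokernels gives a canonical map $\overline{p_{*}}\colon H \to H_c$, which I will show is an isomorphism.

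For the second identification, observe that for $n$ sufficiently large the blow-up $G_n$ has every finite-labelled edge of $G$ fully subdivided into a path of label-$1$ edges---exactly matching the total blow-up construction as applied to the finite-labelled part of $G_c$---while each $\infty$-labelled edge of $G$ has acquired label-$1$ paddings on either side of a preserved central $\infty$-labelled edge. Since label-$1$ edges impose no Cartier constraint, explicit Cartier labellings on $G_n$ supported on the paddings exhibit each class $[\chi_v]$ of a padding vertex as equal to the class of an adjacent ``boundary'' vertex in $H_n$, so the paddings contribute nothing new to the cokernel. This produces the identification $\colim_i H_i \cong \widetilde H_c$, compatibly with the earlier map.

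Combining the two identifications, $H \to \colim_i H_i$ becomes identified with $H_c \to \widetilde H_c$, to which the preceding proposition applies. By Definition \ref{circuitcoprimeinfty}, $(G, l)$ is circuit-coprime precisely when $(G_c, l_c)$ is, so the statement follows. The main obstacle I expect is making the second identification rigorous: the $\infty$-paddings grow unboundedly with $n$, and one must produce, uniformly in $n$, explicit Cartier labellings on $G_n$ witnessing the equality of padding classes with boundary classes, and then show that the resulting colimit matches $\widetilde H_c$ on the nose. A secondary technical point is to verify that $\overline{p_{*}}\colon H \to H_c$ is surjective by producing Cartier preimages under $\delta$ for elements of $\Z^V$ supported on a single $\infty$-component, combining contributions from the finite edges incident to vertices of the component.
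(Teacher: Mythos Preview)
Your proposed identifications $H\cong H_c$ and $\colim_i H_i\cong\widetilde H_c$ are both false in general, so the plan cannot succeed as stated. Take the simplest example: $G$ consists of two vertices $v_1,v_2$ joined by a single edge labelled $\infty$. Then $G_c$ is a single vertex with no edges, so $\mathcal C_c=\Z$, $\delta_c=0$, and $H_c=\Z$. On the other hand, in $G$ the Cartier condition forces $\varphi(v_1)=\varphi(v_2)$, so $\mathcal C\cong\Z$ and $\delta=0$ (the only edge is $\infty$-labelled), giving $H=\Z^{V}=\Z^2$. Your map $\overline{p_*}\colon H\to H_c$ is the summation map $\Z^2\to\Z$, which is surjective but not injective: the class of $\chi_{v_1}-\chi_{v_2}$ lies in its kernel and is nonzero in $H$. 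The same example shows $\colim_i H_i\not\cong\widetilde H_c$, since all $H\to H_n$ are injective and $H$ already has rank $2$ while $\widetilde H_c=\Z$.

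The point you are missing is that the vertices internal to an $\infty$-component of $G$ genuinely contribute to $H$, and these contributions cannot be recovered from $H_c$. What \emph{is} true, and what the paper proves, is that the \emph{cokernel} of $H\to\widetilde H$ is naturally isomorphic to the cokernel of $H_c\to\widetilde H_c$ (here $\widetilde G$ is obtained from $G$ by subdividing only the finite-labelled edges). The paper sets up a commutative cube of short exact sequences relating $(G_c,\widetilde G_c)$ to $(G,\widetilde G)$; the key observation is that the dashed maps $\mathcal C_c/\Z\to\mathcal C/\Z$ and $\widetilde{\mathcal C}_c/\Z\to\widetilde{\mathcal C}/\Z$ are isomorphisms, while the maps $\Z^{V_c}\to\Z^V$ and $\Z^{\widetilde V_c}\to\Z^{\widetilde V}$ have isomorphic cokernels (both identified with $\Z^{\widetilde V\setminus V}\cong\Z^{\widetilde V_c\setminus V_c}$). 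A diagram chase then gives the isomorphism of cokernels $D_3\cong E_3$, so $H\to\widetilde H$ is an isomorphism iff $H_c\to\widetilde H_c$ is, and the previous proposition applies. The paper then handles the passage from $\widetilde H$ to $\colim_i H_i$ separately, showing $\widetilde H\to H_n$ is surjective for large $n$ by the padding argument you sketched; that part of your plan is sound.
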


\begin{proof}
Let $(G_c,l_c)$ be as in Definition \ref{circuitcoprimeinfty}. Denote by $(\widetilde{G}, \til l)$ the labelled graph obtained from $(G,l)$ by replacing every edge $e$ such that $l(e)<\infty$ by a path of length $l(e)$ and by leaving unaltered the edges with label $\infty$. Contracting the edges of $(\til G,\til l)$ with label $\infty$, we recover the total-blow up graph $(\til G_c, \til l_c)$ of $(G_c,l_c)$. As usual, we have a diagram of exact sequences 
$$
\xymatrix{
0 \ar[r] & \Z \ar[r]\ar[d] & \mathcal C \ar[r]^{\delta}\ar[d]^{\iota} & \Z^V \ar[r]\ar[d]^{\epsilon} & H \ar[r]\ar[d] & 0 \\
0 \ar[r] & \Z \ar[r] & \til{\mathcal{C}} \ar[r]^{\til \delta} & \Z^{\til V} \ar[r] & \til H \ar[r] & 0 \\
}
$$
We claim that $H\ra \til H$ is an isomorphism if and only if $(G,l)$ is circuit coprime. 

The map $V\ra V_c$ induces a map between vertex-labellings $\Z^{V_c}\ra \Z^V$, which moreover sends the group of Cartier vertex-labellings of $(G_c,l_c)$ isomorphically to the group of Cartier-vertex labellings of $(G,l)$. We obtain a diagram

$$
\xymatrix{
0 \ar[r] &  \mathcal C_c/\Z \ar[r]\ar[d] \ar@{-->}[rdd]^(.35){\alpha} & \Z^{V_c} \ar[r]\ar[d]\ar@{-->}[rdd]^(.35){\beta} & H_c \ar[r]\ar[d]\ar@{-->}[rdd] & 0 & \\
0 \ar[r] &  \til{\mathcal{C}}_c/\Z \ar[r]\ar@{-->}[rdd]^(.35){\gamma} & \Z^{\til V_c} \ar[r]\ar@{-->}[rdd]^(.35){\delta} & \til H_c \ar[r]\ar@{-->}[rdd] & 0 & \\
& 0 \ar[r] &  \mathcal C/\Z \ar[r]\ar[d] & \Z^{V} \ar[r]\ar[d] & H \ar[r]\ar[d] & 0 \\
& 0 \ar[r] &  \til{\mathcal{C}}/\Z \ar[r] & \Z^{\til V} \ar[r] & \til H \ar[r] & 0 \\
}
$$
where all parallelograms are commutative.
Let
$$
\xymatrix{
D_1 \ar[r] \ar@{-->}[rd] & D_2 \ar[r]\ar@{-->}[rd] & D_3 \ar@{-->}[rd]\ar[r] & 0 & \\
& E_1 \ar[r]  & E_2 \ar[r] & E_3 \ar[r] & 0 \\
}
$$
be the induced diagram between the cokernels of the vertical maps. Because the maps $H\ra \til H$ and $ H_c\ra \til H_c$ are injective, by the Snake lemma the maps $D_1\ra D_2$ and $E_1\ra E_2$ are in fact injective. The maps $\alpha$ and $\gamma$ are isomorphisms, hence $D_1\ra E_1$ is an isomorphism. Moreover $D_2\cong \Z^{\til V_c-V_c}\cong \Z^{\til V-V}  \cong E_2$, and the map between them is easily seen to be an isomorphism. Hence $D_3\ra E_3$ is an isomorphism. Now, $H\ra \til H$ is an isomorphism if and only if $E_3$ is zero, if and only if $D_3$ is zero, if and only if $(G_c,l_c)$ is circuit-coprime, establishing the claim.

For $n$ big enough, $G_n$ is a blow-up of $\til G$, hence the map $H\ra H_n$ factors as $H \ra \til H \ra H_n$, with $\til H\ra H_n$ injective. If we show that the map $\til H \ra H_n$ is also surjective, we win. Notice that the labelled graph $G_n$ is obtained from $\til G$ replacing each edge labelled by $\infty$ by a path of length $2n+1$, with a central edge labelled by $\infty$ and all other edges labelled by $1$. Take one such path and denote by $w_0,w_1,\ldots,w_{2n},w_{2n+1}$ its vertices. The edge between $w_n$ and $w_{n+1}$ is labelled by $\infty$. As usual, we call old vertices the vertices in the image of the map $\til V\ra V_n$, so $w_0$ and $w_{2n+1}$ are old vertices.  Let $\chi_{w_1}\in \Z^{V_n}$ be the vertex-labelling taking value $1$ at $w_1$ and zero everywhere else. Then $\delta(\chi_{w_0})+\chi_{w_1}$ is supported on the old vertices, hence $\chi_{w_1}$ is in the image of the map $\til H\ra H_n$. Also, in $H_n$ we have $\chi_{w_k}=k\chi_{w_1}$ for $i=1,\ldots,n$, so also these vertex-labellings are in the image of $\til H\ra H_n$. A similar argument works for $\chi_{w_k}$ with $k=n+1,\ldots,2n$. Hence the map $\til H\ra H_n$ is surjective. 

\end{proof}

\section{Semi-factoriality of nodal curves}\label{s6}
Let $S$ be the spectrum of a discrete valuation ring $R$ having fraction field $K$ , residue field $k$ and uniformizer $t$. Let $f\colon \X\ra S$ be a nodal curve whose special fibre has split singularities, and $\Gamma=(V,E)$ be the dual graph of the special fibre $\X_k$. For any $v\in V$, we denote by $X_v$ the corresponding irreducible component of the special fibre $\X_k$.

\begin{definition}
 The \textit{labelled graph} of $\X\ra S$ is the $\N_{\infty}$-labelled graph $(\Gamma,l)$ whose labelling $l$ assigns to each edge of $\Gamma$ the thickness (see \ref{subsectionnodalcurves}) of the corresponding singular point of $\X_{k}$.
\end{definition}

Our aim is to relate the property of being circuit-coprime for the graph $(\Gamma,l)$ to the semi-factoriality of $f\colon \X\ra S$. To this end, we are going to provide a dictionary between the geometry of $\X/S$ and the combinatorial objects introduced in \ref{sectiongraphtheory}.

Denote by $\Div_k(\X)$ the group of Weil divisors on $\X$ supported on the special fibre $\X_k$. It is the free abelian group generated by the irreducible components of $\X_k$.  Hence we obtain a natural isomorphism $\Div_k(\X)\ra \Z^V$. 

Let $\mathcal C(\X)$ be the group of Cartier divisors on $\X$ whose restriction to the generic fibre $\X_K$ is trivial. We claim that the natural map $\mathcal C(\X)\ra \Div_k(\X)$ is injective. This follows from (\citep{EGA}, 21.6.9 (i)) under the assumption that $\X$ is normal, which is not satisfied if $\X/S$ has singular generic fibre. However, the proof only requires that for all $x\in \X_k$, $\depth(\O_{\X,x})=1$ implies $\dim \O_{\X,x}=1$. This is immediately checked: let $x\in \X_k$ with $\dim \O_{\X,x}\neq 1$; then $x$ is a closed point of $\X_k$. By $S$-flatness of $\X$, the uniformizer $t$ is not a zero divisor in $\O_{\X,x}$; as $\X_k$ is reduced, $\O_{\X,x}/t\O_{\X,x}$ is reduced. Every reduced noetherian ring of dimension $1$ is Cohen-Macaulay, hence $\depth(\O_{\X,x}/t\O_{\X,x})=1$, and we deduce by \citep{stacks}\href{http://stacks.math.columbia.edu/tag/0AUI}{TAG 0AUI} that $\depth(\O_{\X,x})=2$, establishing the claim. Hence $\mathcal C(\X)$ is in a natural way a subgroup of $\Div_k(\X)$.

Finally, denote by $E(\X)$ the kernel of the restriction map $\Pic(\X)\rightarrow \Pic(\X_K)$, so that $E(\X)$ is the group of isomorphism classes of line bundles on $\X$ that are generically trivial. We have an exact sequence of groups $$0\ra \Z \ra \mathcal C(\X)\rightarrow E(\X) \ra 0$$
where the first map sends $1$ to $\divv(t)$ and the second map sends $D$ to $\O_{\X}(D)$.
Indeed, every principal Cartier divisor supported on the special fibre belongs to $\mathbb Z\divv(t)$. For this we can reduce to showing that every regular function on $\X$ that is generically invertible is of the form $t^nu$ for some $n\in \mathbb Z_{\geq 0}$ and $u\in \O_{\X}(\X)^{\times}$. By \citep{stacks}\href{http://stacks.math.columbia.edu/tag/0AY8}{TAG 0AY8} we have $f_*\O_{\X}=\O_S$, from which the claim easily follows.
\begin{lemma} \label{comparison}
Hypotheses as in the beginning of this section.
\begin{itemize}
\item[i)] The natural isomorphism $\Div_k(\X)\ra \Z^V$ identifies $\mathcal C(\X)\subset \Div_k(\X)$ with the subgroup $\mathcal C\subset \Z^V$ of Cartier vertex labellings (Definition \ref{cartierlabel}).
\end{itemize}

Let $$0\ra \Z\ra \mathcal C\xrightarrow{\delta} \Z^V$$ be the exact sequence of Lemma \ref{kernelZ}, where $\delta$ is the multi-degree operator (Definition \ref{defdelta}).
\begin{itemize}
\item[ii)] The isomorphism $\mathcal C(\X)\ra \mathcal C$ induces an exact sequence
$$0\ra \Z\ra \mathcal C(\X)\xrightarrow{\delta_{\X}} \Z^V.$$
The first arrow is the map $1\mapsto \divv(t)$; the map $\delta_{\X}$ factors via the map $E(\X)\rightarrow \Z^V$, which sends a line bundle $\mathcal L$ to the vertex labelling 

$$v\mapsto \deg \mathcal L_{|X_v}.$$ 
\end{itemize}

Let 
$$\ldots\ra \X_n \ra \ldots \ra \X_1\ra \X_0=\X$$
be the chain of blowing-ups \eqref{chainn}. Denote by $\pi_n$ the composition $\X_n\ra \X$.
\begin{itemize}
\item[iii)] For every $n\geq 0$ the labelled graph of $\X_n\ra S$ is the $n$-th blow-up graph $(\Gamma_n,l_n)$ of $(\Gamma,l)$ (Definition \ref{blowupgraph}). The new vertices of $(\Gamma_n,l_n)$ correspond to the irreducible components of the exceptional fibre of $\X_n\ra \X$.

\item[iv)] Let $\mathcal C_n$ be the group of Cartier vertex labellings on $\X_n$. The map $\mathcal C(\X)\ra \mathcal C(\X_n)$ induced by $\iota\colon\mathcal C\ra \mathcal C_n$ (\ref{subsCLBG}) descends to the pullback map $\pi^*_n\colon E(\X)\rightarrow E(\X_n)$. 
\end{itemize}
\end{lemma}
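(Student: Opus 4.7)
Plan: the lemma contains four claims; I handle each by reducing to a local calculation at the nodes of $\X_k$ (for (i) and (ii)) or by a transport-of-structure and multidegree comparison (for (iii) and (iv)).

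For (i), the identification $\Div_k(\X)\cong\Z^V$ is tautological, and the Cartier condition is a local question at the nodes. At a node $p$ of thickness $l=l(e)$ joining $X_v,X_w$, Lemma \ref{singularities} identifies $\widehat{\O}_{\X,p}$ with $R[[x,y]]/(xy-t^l)$ (with the convention $t^{\infty}=0$); the two branches correspond to the height-$1$ primes $\mathfrak p_v=(x,t)$ and $\mathfrak p_w=(y,t)$. Since $y$ is a unit in the localisation at $\mathfrak p_v$, the relation $xy=t^l$ forces $t$ to be a uniformiser there and $\mathrm{ord}_{\mathfrak p_v}(x)=l$; symmetrically at $\mathfrak p_w$. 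A monomial $x^a y^b t^c$ then has orders $(al+c,bl+c)$ at $(\mathfrak p_v,\mathfrak p_w)$, so realising $n_v X_v + n_w X_w$ locally via a single equation requires $l\mid n_v-n_w$. Conversely a Nakayama computation shows $\dim_k\mathfrak p_v/(\mathfrak p_v\mathfrak m)=2$, so individual components are not Cartier at the node and the congruence is also necessary.

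For (ii), flatness of $\X/S$ gives $\divv(t)=\sum_v X_v$, which under (i) corresponds to the constant labelling $\mathbb{1}\in\mathcal C$, matching the leftmost $\Z$'s in the two short exact sequences. To identify the induced map $E(\X)\to\Z^V$ with $\mathcal L\mapsto(v\mapsto\deg\mathcal L|_{X_v})$, take $D=\sum n_w X_w\in\mathcal C(\X)$ and use $D\sim D-n_v\X_k$ to express $\deg \O_\X(D)|_{X_v}$ as a sum of local contributions at the nodes of $X_v$. At a node $p$ of thickness $l$ between $v$ and $w$, the divisor $(n_w-n_v) X_w$ is Cartier by (i) with local equation $y^{(n_w-n_v)/l}$, whose restriction along $\widehat{\O}_{X_v,p}\cong k[[y]]$ has order exactly $(n_w-n_v)/l$; summing over nodes matches Definition \ref{defdelta}.

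For (iii), the explicit case analysis of subsection \ref{blup} (for thicknesses $\infty$, $2$, and $>2$) realises a single geometric blow-up as the corresponding operation from Definition \ref{blowupgraph}, with new components matching new vertices; induction on $n$ completes the argument.

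For (iv), by (ii) applied to $\X_n$ the multidegree map $E(\X_n)\hookrightarrow\Z^{V_n}$ is injective; since both $\O_{\X_n}(\iota(D))$ and $\pi_n^*\O_\X(D)$ are trivial on the generic fibre, it suffices to show their multidegrees agree on each component of $\X_{n,k}$. The commutativity in \eqref{diagramCartier}, iterated, gives $\delta\circ\iota=\epsilon\circ\delta$, so the former has multidegree equal to the extension by zero of $\delta(D)$. For the latter, $\pi_n$ restricts to an isomorphism on each old component (since $X_v$ is smooth at every blow-up centre, by Lemma \ref{twocomponents}) and contracts each new component to a single point, giving multidegree $\delta(D)(v)$ on old components by (ii) and $0$ on new ones. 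The main obstacle throughout is the local Cartier analysis in (i), particularly at nodes of thickness $\infty$ where $\X$ is not locally integral, but the explicit local model handles all cases uniformly.
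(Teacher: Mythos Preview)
Parts (ii), (iii), and (iv) are correct and follow the paper's argument closely (in (iv) you spell out the multidegree of $\pi_n^*\O_\X(D)$ a bit more explicitly than the paper, but the logic is the same: both maps land in $\Z^{V_n}$ via the injective multidegree, so they coincide).

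There is a genuine gap in (i), in the necessity direction. Your monomial computation shows that every monomial $x^ay^bt^c$ has orders $(al+c,\,bl+c)$ at the two branches, which indeed proves \emph{sufficiency}: when $l\mid n_v-n_w$ one finds a monomial with the prescribed orders. It does \emph{not} prove that an arbitrary $f\in\widehat{\O}_{\X,p}$ with $\divv(f)=n_vX_v+n_wX_w$ must satisfy $l\mid n_v-n_w$, since a general element is an infinite sum of monomials and nothing a priori forbids cancellation in the localisations. Your Nakayama remark $\dim_k\mathfrak p_v/(\mathfrak p_v\mathfrak m)=2$ only shows that $X_v$ with coefficient $1$ is not Cartier; it says nothing about $mX_v$ for $1<m<l$. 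What is missing is precisely the step the paper carries out: after reducing to $\divv(f)=mX_v$, set $d=\gcd(m,l)$, produce $g$ with $\divv(g)=dX_v$, observe that $g^{l/d}/x$ is a unit (here normality of the local ring for finite $l$ is used), and then reduce modulo $t$ to force $l/d=1$. Finally, the case $l=\infty$ really does need a separate argument (the local ring is not a domain, so the normality trick is unavailable); the paper handles it by writing $fg=t^n$ and comparing the images of $f$ in $A/xA$ and $A/yA$, and your closing sentence ``the explicit local model handles all cases uniformly'' does not substitute for this.
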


\begin{proof}\leavevmode
\begin{itemize}

\item[i)] 
Let $D=\sum_v n_vX_v \in \Div_k(\X)$. We want to show the equivalence of the two conditions:
\begin{itemize}

\item[a)] for every node $p\in \X_k$ lying on distinct components $X_w,X_z$ of $\X_k$, the thickness $\tau_p$ divides $n_w-n_z$ (with the convention that $\infty$ divides only $0$);
\item[b)] $D$ is Cartier.
\end{itemize}  
As every Weil divisor $D$ is Cartier on the generic fibre and on the regular locus of $\X$, we may fix a node $p\in \X_k$ and reduce to work on the complete local ring $\widehat{\O}_{\X,p}$. We identify $\widehat{\O}_{\X,p}$ with $A=\widehat{R}[[x,y]]/xy-t^{\tau_p}.$ Let $X_w$ and $X_z$ be the components of $\X_k$ through $p$, and let $Y_w$, $Y_z$ be their preimages in $\Spec A$, which are given by the ideals $(x,t)$ and $(y,t)$ of $A$ respectively.

Assume a) is true; we are going to deduce that $D$ is Cartier at $p$. We may assume that the two components $X_w$ and $X_z$ are distinct, otherwise $D$ is given by $\divv(t^{n_w})$ locally at $p$ and is automatically Cartier at $p$. As $\divv(x)=\tau_pY_w$, we have that $(n_w-n_z)Y_w=\divv(x^{\frac{n_w-n_z}{\tau_p}})$ is Cartier. Therefore $D-\divv(t^{n_z})=\sum_v(n_v-n_z)X_v$ is Cartier at $p$, and also $D$ is.

Assume now b) and that $p$ lies on distinct components $X_w,X_z$ of $\X_k$. We may assume that the restriction of $D$ to $\Spec A$, $n_wY_w+n_zY_z$, is the divisor of some regular function $f\in A=\widehat{R}[[x,y]]/xy-t^{\tau_p}$. We first consider the case $\tau_p=\infty$. As $f$ is a unit in $A[t^{-1}]$, there exists $g\in A$ and $n\geq 0$ such that $fg=t^n$. Now, let $f_x$ be the image of $f$ in $A/xA$. As the latter is a unique factorization domain, $f_x=t^{m_1}u_1$ for some unit $u_1\in (A/xA)^{\times}$ and $m_1\leq n$. Moreover, we have $m_1=n_w$. Similarly, we write $f_y=t^{m_2}u_2\in A/yA$, with $m_2=n_z$. As the images of $f_x$ and $f_y$ in $A/(x,y)A=R$ coincide, we find that $m_1=m_2$, that is, $n_w=n_z$, as desired. Now we remain with the case $\tau_p\neq \infty$. Replacing $f$ by $ft^{-n_z}$, we get $\divv(f)=(n_w-n_z)Y_w$. We want to show that $\tau_p$ divides $m:=n_w-n_z$. Let $d=\gcd(m,\tau_p)$. As $\divv(x)=\tau_pY_w$, we may replace $f$ by a product of powers of $f$ and $x$ and assume that $m=d$. Write $\tau_p=m\alpha$, for some $\alpha\in\Z$. We have $\divv(f^{\alpha}/x)=0$, hence, as $\Spec A$ is normal, $f^{\alpha}/x$ is a unit in $A$. Now, reducing modulo $t$, one can easily see that $\alpha$ has to be $1$, so $m=\tau_p$ as desired.
\item[ii)] The composition $\Z\ra \mathcal C\ra \mathcal C(\X)$ sends $1$ to $\sum_vX_v=\X_k=\divv(t).$ The map $\delta_{\X}$ factors via the cokernel of $\Z\ra \mathcal C(\X)$, which is indeed $E(\X)$. For the characterization of the map $\delta_{\X}$, recall first that $\delta\colon\mathcal C\rightarrow \Z^V$ sends a Cartier vertex labelling $\varphi$ to the vertex labelling

$$
v\mapsto \sum_{\substack{\mbox{\scriptsize edges }  e \\ \mbox{\scriptsize  incident to }v }} \frac{\phi(w)-\phi(v)}{l(e)} 
$$
where $w$ denotes the other endpoint of $e$.
The composition $\delta_{\X}\colon\mathcal C(\X)\ra \mathcal C\ra \Z^{V}$ sends a Cartier divisor $D=\sum_vn_vX_v$ to 
$$
v\mapsto \sum_{\substack{\mbox{\scriptsize nodes }  p \\ \mbox{\scriptsize  lying on }X_v }} \frac{n_w-n_v}{\tau_p} 
$$
with $\tau_p$ being the thickness of the node $p$, $X_w$ the second component passing through $p$. We want to check that $\delta_{\X}(D)$ is the vertex labelling $v\mapsto \deg \O(D)_{|X_v}$. Fix a vertex $z$; multiplication by $t^{n_z}$ gives an isomorphism $\O(D)\cong \O(D')$ where $D'=\sum_v(n_v-n_z)X_v$. We reduce to computing the contribution to $\deg \O(D')_{|X_z}$ coming from $(n_v-n_z)X_v$, where $v\in V$ is some vertex different from $z$. The contribution is zero if $\X_v$ and $\X_z$ do not intersect; otherwise, let $p\in X_v\cap X_z$, with thickness $\tau_p$. Notice that $\tau_p|n_v-n_z$. Locally at $p$, the divisor $(n_v-n_z)X_v$ is given by the fractional ideal $I=(x^{(n_v-n_z)/\tau_p},t^{n_v-n_z})=(x^{(n_v-n_z)/\tau_p})$ of $\widehat \O_{\X,p}\cong\widehat R[[x,y]]/xy-t^{\tau_p}$. Restricting to the branch $y=0,t=0$, we obtain the fractional ideal $I\otimes \widehat \O_{\X,p}/y=(x^{(n_v-n_z)/\tau_p})$ of $k[[x]]$, hence a contribution of $(n_v-n_z)/\tau_p$ to the degree of $\O(D')_{|X_z}$. Summing over all the nodes in $X_v\cap X_z$, we recover the map $\delta_{\X}$.
 
\item[iii)] This can be read directly in the description of the effect of blowing-up on the special fibre provided in \ref{blup}.
\item[iv)] The commutative diagram 
$$
\xymatrix{
0\ar[r] & \Z \ar[r]\ar[d]^{\id} & \mathcal C(\X) \ar[r]\ar[d]^{\iota} & E(\X) \ar[r]\ar@{-->}[d]^{\overline \iota} & 0 \\
0\ar[r] & \Z \ar[r] & \mathcal C(\X_n) \ar[r] & E(\X_n) \ar[r] & 0 \\
}$$
yields a map $\overline{\iota}\colon E(\X)\ra E(\X_n).$ Such map fits into the commutative diagram 
$$
\xymatrix{
 E(\X) \ar[r]^{\delta_{\X}}\ar[d]^{\overline{\iota}} & \Z^{V} \ar[d]^{\epsilon} \\
E(\X_n) \ar[r]^{\delta_{\X_n}} & \Z^{V_n} \\
}
$$
where $\epsilon\colon \Z^V\ra \Z^{V_n}$ is the extension by zero map, and the two horizontal maps are induced by the exact sequences as in ii) for $\X$ and $\X_n$. They associate to a line bundle its multi-degree on the special fibre, and are injective. The pullback map $\pi_n^*\colon E(\X)\ra E(\X_n)$ makes the diagram above commutative as well; it follows that it coincides with $\overline{\iota}$. 
\end{itemize}

\end{proof}
\color{black}

\begin{theorem} \label{mainthm}
Let $\X\ra S$ be a nodal curve over a trait whose special fibre has split singularities.
\begin{itemize}
\item[i)]  If the labelled graph $(\Gamma,l)$ is circuit-coprime then $\X\ra S$ is semi-factorial.
\item[ii)] Suppose that $\Gamma(S,\O_S)$ is strictly-henselian. If $\X$ is semi-factorial over $S$, then the labelled graph $(\Gamma,l)$ is circuit-coprime.
\end{itemize}  
\end{theorem}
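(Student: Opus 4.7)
The two implications share all the machinery built in sections \ref{sectionextending}--\ref{sectiongraphtheory}; the dictionary of Lemma \ref{comparison} translates each geometric step into the combinatorial one and vice versa.

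For (i), starting with a line bundle $L$ on $\X_K$, Theorem \ref{extending} produces an extension $\mathcal L$ of $L$ to some $\X_N$ in the chain \eqref{chainn}. By Proposition \ref{inftygraphs} (together with the fact that each map $H_n\to H_{n+1}$ is injective, as in Lemma \ref{injection}, so that $H_N\hookrightarrow \colim H_n$), the circuit-coprime hypothesis implies that $H\to H_N$ is surjective. Through Lemma \ref{comparison}, the multidegree vector of $\mathcal L$ defines a class in $H_N$ which therefore lifts to $H$. Unwinding, this yields a generically trivial line bundle $\mathcal E\in E(\X_N)$ such that $\mathcal L\otimes\mathcal E^{-1}$ has multidegree supported on old vertices, i.e.\ degree zero on every irreducible component of the exceptional fibre of $\pi_N\colon\X_N\to\X$. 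Proposition \ref{formalfunction} then applies directly and shows that $\pi_{N*}(\mathcal L\otimes\mathcal E^{-1})$ is a line bundle on $\X$, which restricts to $L$ on $\X_K$.

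For (ii), I would argue by contradiction. Assume $\X$ is semi-factorial but $(\Gamma,l)$ is not circuit-coprime; by Proposition \ref{inftygraphs} there exist $N\geq 0$ and a class $c\in H_N$ not in the image of $H\to H_N$. The key step is to realize $c$ as the multidegree class of an actual line bundle on $\X_N$. Since $R$ is strictly henselian, the residue field $k$ is separably closed, so every component $X_v$ of $\X_{N,k}$ (either an old component of $\X_k$ or an exceptional $\mathbb P^1_k$) carries infinitely many smooth $k$-rational points, each of which lifts uniquely to a section of $\X_N^{\smooth}\to S$. Summing such sections (with appropriate signs) one builds a Cartier divisor on $\X_N$ of any prescribed multidegree $\alpha\in\Z^{V_N}$; choosing $\alpha$ to represent $c$, let $\mathcal L$ be the associated line bundle. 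Set $L:=\mathcal L|_{\X_K}$; by semi-factoriality $L$ extends to some $\mathcal M$ on $\X$, and the pullback $\pi_N^*\mathcal M$ is another extension of $L$ whose multidegree vanishes on each new vertex (any line bundle is locally trivial around a closed point of $\X$, hence its pullback is trivial on the exceptional fibre). Thus the multidegree class of $\pi_N^*\mathcal M$ in $H_N$ lies in the image of $H\to H_N$, while $\mathcal L$ and $\pi_N^*\mathcal M$ differ by an element of $E(\X_N)$, so they define the same class $c$ in $H_N$. This contradicts the choice of $c$.

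The only non-routine point I foresee is the realization step in (ii), namely exhibiting a line bundle on $\X_N$ with any prescribed multidegree. This is precisely where the strict henselianity hypothesis is used, as it provides enough smooth sections $S\to\X_N^{\smooth}$. Everything else is a matter of running Lemma \ref{comparison} in each direction and invoking Proposition \ref{inftygraphs} and Proposition \ref{formalfunction}.
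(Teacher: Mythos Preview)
Your proposal is correct and follows essentially the same route as the paper's proof: part (i) is identical, and part (ii) is the same contrapositive argument, using strict henselianity to lift smooth $k$-rational points to sections and comparing multidegrees of $\mathcal L$ and $\pi_N^*\mathcal M$ in $H_N$. The only cosmetic difference is that the paper observes one may take the obstructing class $c$ to be a basis vector $\chi_v$ for a \emph{new} vertex $v$ (since classes supported on old vertices are automatically in the image of $H\to H_N$), so a single section through one exceptional $\mathbb P^1_k$ suffices rather than realizing an arbitrary multidegree; your more general realization step is harmless but unnecessary.
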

\begin{proof}
We start with part i). Suppose $\Gamma$ is circuit-coprime. Let $L$ be a line bundle on $\X_K$. By Theorem \ref{extending}, there exists an integer $n\geq 0$ such that $L$ extends to a line bundle $\til{\mathcal L}$ on $\X_n$. Let $(\Gamma_n,l_n)$ be the labelled graph of $\X_n$, which is the $n$-th blow-up graph of $\Gamma$. Denote by $\alpha\in \Z^{V_n}$ the vertex-labelling assigning to each vertex $v$ the degree of the restriction of $\til{\mathcal L}$ to the component of $(\X_n)_k$ corresponding to $v$. By Proposition \ref{inftygraphs}, the map $H\rightarrow H_n$ is an isomorphism; hence there exists a Cartier vertex labelling $\phi$ on $(\Gamma_n,l_n)$ such that $\delta(\phi)+\alpha$ is in the image of the map $\Z^V\rightarrow \Z^{V_n}$. Equivalently (by Lemma \ref{comparison}) there exists a Cartier divisor $D\in \mathcal C(\X_n)$, such that $\delta_{\X_n}(D)+\alpha$ is in the image of $\Z^V\rightarrow \Z^{V_n}$, i.e., $\delta_{\X}(D)+\alpha$ has value zero on all new vertices of $\Gamma_n$. This means precisely that $\O_{\X_n}(D)\otimes \til{\mathcal L}$ has degree zero on every component of the exceptional fibre of $\pi_n\colon\X_n\ra \X$. By Proposition \ref{formalfunction}, $\mathcal L:=(\pi_n)_*(\til{\mathcal L}\otimes \O(D))$ is a line bundle on $\X$, which restricts to $L$ on the generic fibre.

Let's turn to part ii). Suppose that $\Gamma$ is not circuit-coprime. Then there exists $n\geq 0$ such that the map $H\rightarrow H_n$ is not surjective. Let $\alpha$ be a basis element of $\Z^{V_n}$ such that the image of $\alpha$ in $H_n=\Z^{V_n}/\delta_n(\mathcal C_n)$ is not in the image of $H\rightarrow H_n$. Then $\alpha$ takes value $1$ on some vertex $v$ of $\Gamma_n$ and value zero on all other vertices. The vertex $v$ corresponds to an exceptional component $C\cong \mathbb P^1_k$ of $\pi_n\colon \X_n\rightarrow \X$. Let $p$ be a $k$-rational point of $(\X_n)^{sm}_k$ lying on $C$, which exists as $k$ is separably closed. Since the base is henselian, $p$ can be extended to a section $s\colon S\rightarrow \X_n$. The image $D\subset \X_n$ of $s$ defines a Cartier divisor. Let $L:=\O(D)_{|K}$ be its restriction to the generic fibre. Assume by contradiction that $L$ can be extended to a line bundle $\mathcal L$ on $\X$. Then $\mathcal F:=\O(D)\otimes\mathcal \pi_n^*\mathcal L^{-1}$ is generically trivial. Let $D'$ be a Cartier divisor supported on the special fibre of $\X_n$ such that $\O(D')\cong \mathcal F$. Then $D'$ corresponds to a Cartier-vertex labelling $\phi$ of $\Gamma_n$, and $\alpha-\delta_n(\phi)$ is the vertex-labelling associated to the multidegree of $\pi_n^*\mathcal L$. As $\pi^*_n\mathcal L$ has degree zero on every component of the exceptional fibre of $\pi_n\colon\X_n\ra \X$, $\alpha-\delta_n(\phi)$ has value zero on every new vertex of $\Gamma_n$. In particular, $\alpha\delta_n(\varphi)$ is in the image of $H\rightarrow H_n$, and so is $\alpha$, yielding a contradiction.

\end{proof}

\begin{remark}
The assumption that $\Gamma(S,\O_S)$ is strictly-henselian can be replaced by the weaker assumption: for each irreducible component $Y$ of $\X_k$, there exists a line bundle $\mathcal L_Y$ on $\X$ whose restriction to $\X_k$ has degree $1$ on $Y$ and degree $0$ on all other components. 
\end{remark}

\begin{corollary}\label{blowuppp}
Let $\X\ra S$ be a nodal curve over a trait, whose special fibre has split singularities. Let $\pi\colon \widetilde{\X}\ra \X$ be the blowing-up of $\X$ at the finite union of closed points $\X^{nreg}\cap \X_k$. The restriction map
$$\Pic(\widetilde{\X})\ra \Pic(\X_K)$$
is surjective.
\end{corollary}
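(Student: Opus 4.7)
The plan is to deduce this from Theorem \ref{mainthm} (i), which does not require strict henselianity of the base. First, I would identify $\widetilde{\X}$ with the term $\X_1$ of the blow-up chain \eqref{chainn}: both are, by construction, the blow-up of $\X$ at the finite union of closed non-regular points of the special fibre. By Lemma \ref{comparison} (iii), the labelled graph of $\widetilde{\X}$ is then the first blow-up graph $(\Gamma_1, l_1)$ of the labelled graph $(\Gamma, l)$ of $\X/S$, so it suffices to prove that $(\Gamma_1, l_1)$ is circuit-coprime in the sense of Definition \ref{circuitcoprimeinfty}.

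Recalling Definition \ref{blowupgraph}, each edge $e \in E$ with $l(e) \geq 2$ is subdivided into a path with labels $1, l(e)-2, 1$, while edges of label $1$ are preserved. In particular, every new vertex of $\Gamma_1$ has degree exactly $2$, and at least one of its incident edges has label $1$ (namely the outer edge of the subdivision). I would then take any circuit $C$ in the contraction $(\Gamma_1)_c$ and lift it to a circuit $\widetilde{C}$ in $\Gamma_1$ by un-contracting the label-$\infty$ edges. If $\widetilde{C}$ passes through some new vertex, the degree-$2$ constraint forces it to traverse at least one outer edge of label $1$; otherwise $\widetilde{C}$ consists entirely of edges of $\Gamma_1$ inherited unchanged from $\Gamma$, which by definition of the blow-up graph can only have label $1$. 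In either case $\widetilde{C}$ contains a label-$1$ edge, and such an edge, having label neither $\infty$ nor altered by contraction, survives in $C$. Hence every circuit in $(\Gamma_1)_c$ has $\gcd$ of labels equal to $1$.

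Thus $(\Gamma_1, l_1)$ is circuit-coprime, and Theorem \ref{mainthm} (i) yields semi-factoriality of $\widetilde{\X}/S$, i.e.\ surjectivity of $\Pic(\widetilde{\X}) \to \Pic(\X_K)$. There is no real obstacle once the main theorem and the dictionary of Lemma \ref{comparison} are available: the argument is a direct combinatorial check on the structure of the first blow-up graph. The only minor subtlety is the interpretation of the middle edge of a subdivision of an edge with $l(e) = 2$ (geometrically corresponding to a thickness-$2$ node whose blow-up produces a single new $\mathbb{P}^1_k$), but since circuit-coprimality depends only on labels of edges appearing along circuits of $(\Gamma_1)_c$, this does not affect the argument.
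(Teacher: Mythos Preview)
Your proposal is correct and follows essentially the same route as the paper's proof: identify the labelled graph of $\widetilde{\X}$ with the first blow-up graph $(\Gamma_1,l_1)$, verify that it is circuit-coprime, and apply Theorem \ref{mainthm} (i). The paper's combinatorial check is phrased as ``every edge of $\Gamma_1$ with label different from $1$ is adjacent to exactly two edges, both with label $1$'', which is the edge-centric version of your vertex-centric observation that each new vertex has degree $2$ with at least one incident edge of label $1$; the two formulations are equivalent and lead to the same conclusion.
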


\begin{proof}
Let $(\Gamma,l)$ be the labelled graph of $\X\ra S$. The labelled graph $(\til \Gamma, \til l)$ of $\widetilde \X\ra S$ is the first-blow-up graph of $\Gamma$ (definition \ref{blowupgraph}). Every edge of $\til \Gamma$ with a label different from $1$ is adjacent to exactly two edges, both with label $1$. Hence $\til \Gamma$ is circuit-coprime, and we conclude by Theorem \ref{mainthm}.

\end{proof}

\begin{corollary}\label{tree}
Let $\X\ra S$ be a nodal curve over a trait, whose special fibre has split singularities. Suppose that the special fibre $\X_k$ is of compact-type (i.e. its dual graph $\Gamma$ is a tree). Then the restriction map
$$\Pic(\X)\ra \Pic(\X_K)$$
is surjective.
\end{corollary}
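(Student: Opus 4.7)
The proof is essentially a one-line invocation of Theorem \ref{mainthm}, so the plan is very short. My strategy is to verify that the hypothesis of circuit-coprimeness is automatic when the dual graph $\Gamma$ is a tree, and then quote the main theorem.

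First I would recall Definition \ref{circuitcoprimeinfty}: an $\N_{\infty}$-labelled graph $(\Gamma, l)$ is circuit-coprime if, after contracting all edges of label $\infty$, every circuit of the resulting graph has labels whose greatest common divisor equals $1$. If $\Gamma$ is a tree, then contracting any subset of its edges still produces a tree (contraction of an edge of a tree yields a tree on one fewer vertex). A tree has no circuits, so the condition on circuits is vacuously satisfied. Hence $(\Gamma, l)$ is circuit-coprime.

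With circuit-coprimeness established, Theorem \ref{mainthm}(i) applies directly and gives that $\X \ra S$ is semi-factorial. By Definition \ref{defsemifact}, this is exactly the surjectivity of $\Pic(\X) \ra \Pic(\X_K)$ that we want. No step is a real obstacle here; the entire content of the corollary is the combinatorial observation that trees have no circuits.
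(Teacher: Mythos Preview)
Your proof is correct and matches the paper's own argument essentially verbatim: the paper simply notes that $\Gamma$ has no circuits, hence $(\Gamma,l)$ is vacuously circuit-coprime, and then invokes Theorem~\ref{mainthm}(i). Your extra remark that contracting the $\infty$-labelled edges of a tree still yields a tree is a harmless elaboration of the same point.
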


\begin{proof}
The dual graph $\Gamma$ of the special fibre has no circuits, hence the labelled graph $(\Gamma,l)$ is circuit-coprime.
\end{proof}
In general, semi-factoriality of nodal curves over traits does not descend along \'etale base change, and we cannot drop the assumption in Theorem \ref{mainthm} that the special fibre of the curve has split singularities. Here is an example.

\begin{example}
Let $R=\Q[[t]]$, $K=\Frac R$, $S=\Spec R$, and $$\X=\Proj \frac{R[x,y,z]}{x^2+y^2-t^2z^2}.$$
The curve $\X\ra S$ has smooth generic fibre $\X_K/K$, and a node $P=(t=0,x=0,y=0,z=1)$ on the special fibre.
The section $s\colon S\ra \X$ given by $x=t, y=0, z=1$ goes through the node $P$ . The Cartier divisor on $\X_K$ given by the image of $s_K\colon \Spec K\ra \X_K$ does not extend to a Cartier divisor on $\X$. Indeed, if by contradiction it extended to a Cartier divisor $D$ on $\X$, the difference $D-s$ as Weil divisors would be a Weil divisor supported on the special fibre; hence a Weil divisor linearly equivalent to zero, since the special fibre is irreducible. Then $s$ would be Cartier, which it is not, and we have the contradiction.

On the other hand, the base change of $\X/R$ by the \'etale map $R\ra R':=\Q(i)[[t]]$  is semi-factorial, since its special fibre has split singularities and its graph is a tree. We see that, denoting by $X_1$ and $X_2$ the two components of the special fibre, the Weil divisors $s_{R'}-X_1$ and $s_{R'}-X_2$ are both Cartier, and both extend the Cartier divisor on $\X_{K'}$ given by $s_{K'}$.
\end{example}

\section{Application to N\'eron lft-models of jacobians of nodal curves}\label{s7}

\subsection{Representability of the relative Picard functor}
Let $S$ be a scheme and $\X\ra S$ a curve. We denote by $\Pic_{\X/S}$ the relative Picard functor, that is, the fppf-sheafification of the functor 
\begin{eqnarray}
(\Sch/S)^{opp} & \rightarrow & \Sets \nonumber \\
T & \mapsto & \{\mbox{invertible sheaves on } \X_T\}/\cong \nonumber
\end{eqnarray}

We start with a result on representability of the Picard functor:

\begin{theorem}[\citep{BLR} 9.4/1] \label{algspace}
Let $f\colon\X\ra S$ be a nodal curve. Then the relative Picard functor $\Pic_{\X/S}$ is representable by an algebraic space\footnote{Defined as in \citep{BLR} 8.3/4}, smooth over $S$.
\end{theorem}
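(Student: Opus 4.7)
My approach would be to invoke Artin's representability theorem for algebraic spaces, following the standard strategy developed in BLR, Chapter 8. First, I would check the hypotheses on $f\colon \X \to S$: by the definition of nodal curve recalled in Section \ref{subsectionnodalcurves}, $f$ is proper, flat, of finite presentation, and its geometric fibres are connected and reduced of dimension $1$. These conditions imply that the natural map $\O_S \to f_*\O_\X$ is an isomorphism universally (cohomological flatness in dimension zero), by standard base-change arguments together with the observation that $\H^0$ of a geometrically connected and reduced proper curve over a field $k$ is $k$.

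Next, I would verify Artin's criterion for the fppf-sheafified relative Picard functor. This amounts to producing a deformation/obstruction theory controlled by the cohomology of $\O_{\X_T}$: deformations of a line bundle $L$ on $\X_T$ along a square-zero closed immersion $T\hookrightarrow T'$ with square-zero ideal $I$ are classified by $\H^1(\X_T,\O_{\X_T}\otimes_{\O_T}I)$, and obstructions lie in $\H^2(\X_T,\O_{\X_T}\otimes_{\O_T}I)$. Since $f$ is proper these are coherent $\O_T$-modules. Combined with Grothendieck's formal existence theorem for proper morphisms (which gives effectivity of formal line bundles) and with Schlessinger's conditions on deformation functors of invertible sheaves, all the hypotheses of Artin's theorem are met. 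One concludes that $\Pic_{\X/S}$ is representable by an algebraic space locally of finite presentation over $S$.

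Finally, for smoothness I would apply the infinitesimal lifting criterion: given a square-zero closed immersion $T \hookrightarrow T'$ of affine $S$-schemes with ideal $I$ and a line bundle $L$ on $\X_T$, the obstruction to extending $L$ to a line bundle on $\X_{T'}$ lies in the group $\H^2(\X_T,\O_{\X_T}\otimes_{\O_T}I)$ discussed above. Because $\X_T\to T$ has fibres of dimension $1$, this $\H^2$ vanishes by the Grothendieck bound on cohomological dimension of coherent sheaves on curves; hence every line bundle lifts and $\Pic_{\X/S}$ is formally smooth over $S$, and thus smooth since it is locally of finite presentation.

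The main technical hurdle in executing this plan from scratch is verifying Artin's criteria in full detail, especially effectivity of formal deformations and openness of versality; these are classical but nontrivial, and I would defer to the reference BLR, \S 8.3 and 9.4, where the argument is carried out for the class of morphisms including nodal curves, rather than reprove them here.
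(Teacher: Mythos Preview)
The paper does not give a proof of this statement at all: it is quoted directly from \citep{BLR}, 9.4/1, as an external result, with no argument supplied. Your sketch is a faithful outline of the proof strategy that \citep{BLR} itself carries out (Artin's criteria via the deformation theory of line bundles, with smoothness from vanishing of $\H^2$ on curves), so there is nothing to compare---you have simply filled in what the paper chose to cite rather than reprove.
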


\begin{lemma} \label{rigidif}

Let $f\colon\X\rightarrow S$ be a nodal curve admitting a section $s\colon S\rightarrow \X$. Then for any $S$-scheme $T$ the natural map
$$\Pic(\X\times_ST)/\Pic(T)\ra \Pic_{\X/S}(T)$$
is an isomorphism.
\end{lemma}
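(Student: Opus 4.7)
The plan is to use the standard rigidification trick along the section $s$. Concretely, I would introduce the presheaf $Q$ on $\Sch/S$ whose value at $T$ is the set of isomorphism classes of pairs $(L,\alpha)$, where $L$ is an invertible sheaf on $\X_T$ and $\alpha\colon s_T^*L \isomto \O_T$ is a trivialization along the pulled-back section $s_T\colon T \to \X_T$. There is an obvious forgetful map of presheaves $Q \to P$, where $P(T):=\Pic(\X_T)/\Pic(T)$ is the presheaf whose fppf sheafification is $\Pic_{\X/S}$. The plan is to prove that this map is an isomorphism and that $Q$ is already an fppf sheaf; combined, this forces $P \to \Pic_{\X/S}$ to be an isomorphism.

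The crucial input, which I would isolate first, is that $f_{T*}\O_{\X_T}=\O_T$ for every $S$-scheme $T$. This follows from cohomology and base change applied to $f$, since nodal curves are proper and flat with geometrically reduced and connected fibres of dimension one (so $H^0$ of the structure sheaf on each geometric fibre is the ground field). The immediate corollary is that a rigidified line bundle $(L,\alpha)$ has no nontrivial automorphisms: any automorphism is a unit $u\in H^0(\X_T,\O_{\X_T}^\times)=H^0(T,\O_T^\times)$, and the condition $s_T^*u=1$ forces $u=1$ because $s_T^*$ is the identity on $H^0(T,\O_T^\times)$. Once this is established, fppf descent for $Q$ is automatic (descent data on objects without automorphisms glue uniquely), so $Q$ is an fppf sheaf.

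Next I would check that the forgetful map $Q(T)\to P(T)$ is bijective. Surjectivity is straightforward: given a line bundle $L$ on $\X_T$, the bundle $L':=L\otimes f_T^*(s_T^*L)^{-1}$ represents the same class in $P(T)$ and comes equipped with a canonical trivialization $s_T^*L'\isomto \O_T$. For injectivity, suppose $(L_1,\alpha_1)$ and $(L_2,\alpha_2)$ both map to the same class in $P(T)$, so that $L_1\cong L_2\otimes f_T^*M$ for some line bundle $M$ on $T$. Pulling back by $s_T$ and using the trivializations $\alpha_i$ yields $\O_T\cong \O_T\otimes M$, so $M$ is trivial, and thus $L_1\cong L_2$. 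Any such isomorphism differs from a compatible one by a global unit on $\X_T$, which lies in $H^0(T,\O_T^\times)$ by the identification above, so we can adjust it to be compatible with $\alpha_1,\alpha_2$.

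Putting the two steps together, $P\cong Q$ as presheaves and $Q$ is an fppf sheaf, so $P$ is itself an fppf sheaf and hence coincides with its sheafification $\Pic_{\X/S}$. I do not expect any serious obstacle; the only substantive point is verifying $f_{T*}\O_{\X_T}=\O_T$, and the rest is formal bookkeeping with rigidifications.
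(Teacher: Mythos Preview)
Your proposal is correct and is precisely the standard rigidification argument from \citep{BLR}, 8.1, which is what the paper cites as its proof; the key substantive input you identify, namely $f_{T*}\O_{\X_T}=\O_T$ via cohomology and base change for a proper flat family with geometrically reduced and connected fibres, is exactly what makes the argument go through.
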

\begin{proof}
See the discussion about rigidified line bundles on \citep{BLR} 8.1.
\end{proof}

\subsection{N\'eron lft-models}
Let $S$ be a Dedekind scheme, that is, a noetherian normal scheme of dimension $\leq 1$. Then $S$ is a disjoint union of integral Dedekind schemes $S_i$. The \textit{ring of rational functions} of $S$ is the direct sum $K:=\bigoplus_i k(\eta_i)$, where the points $\{\eta_i\}$ are the generic points of the $S_i$. 
\begin{definition}[\citep{BLR}, 10.1/1]
Let $S$ be a Dedekind scheme, with ring of rational functions $K$. Let $A$ be a $K$-scheme. A \textit{N\'eron lft-model} over $S$ for $A$ is the datum of a smooth separated scheme $\mathcal A \ra S$ and a $K$-isomorphism $\varphi\colon \mathcal A\times_SK\rightarrow A$ satisfying the following universal property: for any smooth map of schemes $T\rightarrow S$ and $K$-morphism $f\colon T_{K} \ra A$, there exists a unique $S$-morphism $F\colon T\ra \mathcal A$ with $F_K=f$.

\end{definition}

A N\'eron lft-model differs from a N\'eron model in that the former is not required to be quasi-compact. 

\begin{proposition}[\citep{BLR}, 10.1/2] \label{weakNM}
Let $S$ be a trait and $G$ a smooth separated $S$-group scheme. The following are equivalent:
\begin{itemize}
\item[i)] $G$ is a N\'eron lft-model of its generic fibre;
\item[ii)] for every essentially smooth local extension of traits $S'\ra S$, with $K'=\Frac \Gamma(S,\O_S)$, the map $G(S')\ra  G(K')$ is surjective.
\end{itemize}
\end{proposition}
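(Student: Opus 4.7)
The plan is to establish the two implications separately, leaning on Weil's extension theorem (BLR 4.4/1) for the more substantial direction.

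For the forward direction (i) $\Rightarrow$ (ii), my approach is a spreading-out argument. Let $S' = \Spec R'$ be an essentially smooth local extension of traits; by definition $R' \cong A_{\mathfrak p}$ for some smooth finitely presented $R$-algebra $A$ and prime $\mathfrak p \subset A$, and then $R' = \colim_{g \notin \mathfrak p} A_g$ is a filtered colimit of smooth $R$-algebras. Using that $G$ is locally of finite type together with the standard machinery on cofiltered limits of schemes (EGA IV §8), we have $G(S') = \colim_g G(\Spec A_g)$ and similarly $G(K') = \colim_g G((A_g)_K)$. A given $K'$-point $x$ therefore descends to some $x_g \in G((A_g)_K)$, and the Néron lft-model property applied to the smooth $S$-scheme $\Spec A_g$ extends $x_g$ to an $S$-morphism $\Spec A_g \to G$. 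Pulling this back along $\Spec R' \to \Spec A_g$ furnishes the desired element of $G(S')$.

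For the reverse direction (ii) $\Rightarrow$ (i), given a smooth $T \to S$ and a $K$-morphism $f\colon T_K \to G_K$, I regard $f$ as a rational $S$-map from the regular $S$-scheme $T$ to the smooth separated $S$-group scheme $G$. By Weil's extension theorem it suffices to check that $f$ is defined at every point of $T$ of codimension $\leq 1$. Points in $T_K$ pose no problem since $f$ is a bona fide morphism there. The remaining codimension-one points are the generic points $\xi$ of the irreducible components of the special fibre $T_k$. At such a $\xi$, the local ring $\O_{T,\xi}$ is a DVR (since $T$ is regular, being smooth over the regular base $S$), and is essentially smooth over $\O_{S,s}$ as a localization of a smooth $R$-algebra; moreover, $\xi$ maps to the closed point of $S$, so $\Spec \O_{T,\xi} \to S$ is a local extension of traits. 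Applying (ii) to the $\Frac(\O_{T,\xi})$-point of $G$ induced by $f$ extends it to a morphism $\Spec \O_{T,\xi} \to G$, thereby defining $f$ at $\xi$. Weil then supplies the extension $T \to G$. Uniqueness comes for free from $G$ being separated and the schematic density of $T_K$ in the $S$-flat scheme $T$.

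The main obstacle is setting up the invocation of Weil's extension theorem cleanly: one must confirm that the hypotheses (regularity of $T$, smoothness and separatedness of $G$) are in force, and verify that the local rings $\O_{T,\xi}$ at codimension-one points of the special fibre really are essentially smooth local extensions of traits of the kind appearing in (ii). The forward spreading-out argument is routine once one is careful with the passage from locally-of-finite-type morphisms to points valued in a cofiltered limit of rings.
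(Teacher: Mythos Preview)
The paper does not supply its own proof of this proposition; it is stated with the attribution \cite{BLR}, 10.1/2 and used as a black box. Your argument is precisely the standard proof one finds in \cite{BLR}: a spreading-out from an essentially smooth local ring for (i) $\Rightarrow$ (ii), and Weil's extension theorem (\cite{BLR}, 4.4/1) for (ii) $\Rightarrow$ (i), after verifying that the local rings at generic points of components of $T_k$ are essentially smooth local DVR-extensions of $R$. The proof is correct as written; the only point one might make explicit is that, once you have the morphism $\Spec \O_{T,\xi} \to G$, a second spreading-out (again using that $G$ is locally of finite presentation over the Noetherian base) produces an open neighbourhood of $\xi$ on which $f$ is defined, and separatedness of $G$ ensures this agrees with $f$ on $T_K$ --- but this is implicit in ``thereby defining $f$ at $\xi$''.
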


\begin{lemma} \label{Pic-N_etale}
Let $\X\ra S$ be a nodal curve over a trait. Let $\cll(e_K)\subset \Pic_{\X/S}$ be the schematic closure of the unit section $e_K\colon \Spec K\ra \Pic_{\X_K/K}$. Then the fppf-quotient sheaf $\mathcal N=\Pic_{\X/S}/\cll(e_K)$ is representable by a smooth separated $S$-group scheme. Moreover, the quotient morphism $\Pic_{\X/S}\ra \mathcal N$ is \'etale.
\end{lemma}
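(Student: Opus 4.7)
The strategy is to show that $\cll(e_K)$ is an $S$-flat, étale, closed subgroup algebraic space of $\Pic_{\X/S}$, from which the representability, smoothness, and separatedness of $\mathcal{N}$, as well as the étaleness of the quotient morphism, will follow by standard theory of fppf quotients. First I would verify that $\cll(e_K)$ is an $S$-flat closed subgroup algebraic space: flatness is automatic from the definition as a schematic closure inside the smooth (hence $S$-flat) algebraic space $\Pic_{\X/S}$, while stability under multiplication and inversion follows from the uniqueness of schematic closure applied to the corresponding operations on $\Pic_{\X/S}$, both of which preserve $e_K$ on the generic fibre. Normality is automatic because $\Pic_{\X/S}$ is commutative.

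The main step, and the main obstacle, is to prove that $\cll(e_K)\to S$ is étale. The key observation is that $\cll(e_K)$ is the schematic closure of a single $K$-rational point, rather than of a more general closed subscheme of the generic fibre (for which étaleness could easily fail, e.g.\ $\mu_p\subset\mathbb{G}_m$ over $\mathbb{Z}_p$). Since $\Pic_{\X_K/K}$ is a separated $K$-scheme (representable for the proper, geometrically reduced and geometrically connected curve $\X_K$), the unit section $e_K$ is a closed immersion on the generic fibre, and the image point of $e_K$ in $\Pic_{\X/S}$ has residue field $K$. Consequently $\cll(e_K)$ is integral, being reduced as a scheme-theoretic image of $\Spec K$ and irreducible as the closure of a single point, with function field $K$. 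At any closed point $x$ of the special fibre, writing $R$ for $\Gamma(S,\O_S)$, the local ring $\O_{\cll(e_K),x}$ is an $R$-flat local integral domain with fraction field $K$, hence an $R$-subalgebra of $K$. Since the only $R$-subalgebras of $K$ are $R$ and $K$ themselves, and the stalk has a non-trivial maximal ideal, it must equal $R$. Therefore $\cll(e_K)\to S$ is a local isomorphism at $x$, and hence étale.

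With étaleness of $\cll(e_K)$ in hand, the remaining claims follow readily. The fppf-quotient of a smooth algebraic space by an étale closed normal subgroup is again a smooth algebraic space, with étale quotient map; $\mathcal{N}$ is separated because $\cll(e_K)$ is closed in $\Pic_{\X/S}$, so the equivalence relation it defines on $\Pic_{\X/S}\times_S\Pic_{\X/S}$ is closed and hence the diagonal of $\mathcal{N}$ is closed by fppf descent; and by Anantharaman's theorem (see \citep{BLR} 6.6/3), a smooth, separated $S$-group algebraic space locally of finite type over the trait $S$ is automatically representable by a scheme.
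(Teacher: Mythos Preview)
Your proof is correct, and its overall architecture coincides with the paper's: show that $\cll(e_K)$ is a flat closed normal subgroup étale over $S$, form the fppf quotient, deduce smoothness and separatedness of $\mathcal N$ and étaleness of the quotient map, and apply Anantharaman's theorem for representability by a scheme. The first and last paragraphs match the paper's reasoning almost verbatim.

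The one genuine difference is in how you establish that $\cll(e_K)\to S$ is étale. The paper étale-localises on $S$ to reduce to the split-singularity case and then invokes the combinatorics of Lemma~\ref{comparison}(ii): the multidegree map $E(\X)\to\Z^V$ is injective, so $\cll(e_K)$ meets the identity component $\Pic^0_{\X/S}$ trivially, whence its special fibre embeds into the étale component group of $\Pic_{\X_k/k}$ and is therefore étale over $k$. Your route is purely ring-theoretic and more elementary: $\cll(e_K)$ is integral with function field $K$, so the local ring at any closed point is an $R$-subalgebra of $K$, hence $R$ itself. This uses nothing specific to Picard functors and in fact proves the general statement that the schematic closure of any $K$-rational point in an $S$-flat locally finite-type space over a trait is étale over $S$; the paper's route, by contrast, ties the lemma back into the graph-theoretic framework of Section~\ref{s6}.

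One technical point deserves a sentence of care. You invoke Zariski local rings $\O_{\cll(e_K),x}$, but Theorem~\ref{algspace} only gives $\Pic_{\X/S}$ as an algebraic space, and $\cll(e_K)$ is typically \emph{not} separated over $S$ (whenever there are several generically-trivial line bundles on $\X$, the closure of $e_K$ looks like several copies of $\Spec R$ glued along $\Spec K$), so the standard ``separated and locally quasi-finite over a scheme implies scheme'' criterion does not immediately apply. A fibre-dimension count from flatness already shows $\cll(e_K)\to S$ is locally quasi-finite; you should add a line justifying that $\cll(e_K)$ is nonetheless a scheme, after which your local-ring argument goes through verbatim.
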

\begin{proof}

As $\cll(e_K)$ is flat over $S$, the fppf-quotient of sheaves $\mathcal N=\Pic_{\X/S}/\cll(e_K)$ is a group algebraic space, smooth over $S$ because $\Pic_{\X/S}$ is; as $\cll(e_K)$ is closed in $\Pic_{\X/S}$, $\mathcal N$ is separated over $S$. 
In particular, $\mathcal N$ is a separated group algebraic space locally of finite type over $S$, so it is a group scheme by \citep{Anantharaman1973}, Chapter IV, Theorem 4.B. Finally, to show that $\Pic_{\X/S}\ra \mathcal N$ is \'etale we prove that $\cll(e_K)$ is \'etale over $S$. As the property is \'etale local on $S$, we may assume that $\X\ra S$ has special fibre with split singularities. The multidegree map $E(\X)\ra \Z^{V}$ (Lemma \ref{comparison}, ii)) is injective, hence the intersection of $\cll(e_K)$ with the identity component $\Pic^0_{\X/S}\sub \Pic_{\X/S}$ is trivial and it follows that $\cll(e_K)$ is \'etale over $S$.
\end{proof}

Given a nodal curve $\X\ra S$ over a trait, we can associate to it the labelled graph $(\Gamma,l)$ of the base change $\X\times_SS'\ra S'$, where $S'$ is the spectrum of the strict henselization of $\Gamma(S,\O_S)$ with respect to some algebraic closure of the residue field $k$. The graph $(\Gamma,l)$ does not depend on the choice of an algebraic closure of $k$.
\begin{theorem} \label{mainneron}
Let $\X\ra S$ be a nodal curve over a trait. The $S$-group scheme $\mathcal N=\Pic_{\X/S}/\cll(e_K)$ is a N\'eron lft-model for $\Pic_{\X_K/K}$ over $S$ if and only if the labelled graph $(\Gamma,l)$ of $\X\ra S$ is circuit-coprime.  
\end{theorem}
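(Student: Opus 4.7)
My plan is to translate the N\'eron lft property for $\mathcal{N}$ into the semi-factoriality of $\X$, and then invoke Theorem \ref{mainthm}. By Lemma \ref{Pic-N_etale}, $\mathcal{N}$ is a smooth separated $S$-group scheme, so by Proposition \ref{weakNM} I must verify that $\mathcal{N}(S')\to \mathcal{N}(K')$ is surjective for every essentially smooth local extension of traits $S'\to S$. Since essentially smooth local extensions of traits have ramification index $1$ and the labelled graph $(\Gamma,l)$ is defined via the strict henselization, $(\Gamma,l)$ is preserved when we pass from $\X\to S$ to $\X\times_S S'\to S'$; both the circuit-coprime hypothesis and the semi-factoriality conclusion of Theorem \ref{mainthm} thus transfer cleanly to such base changes.

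The key dictionary: taking \'etale cohomology of the short exact sequence $0\to \cll(e_K)\to \Pic_{\X/S}\to \mathcal{N}\to 0$ over $S^{sh}$ gives
$$0\to \cll(e_K)(S^{sh})\to \Pic_{\X/S}(S^{sh})\to \mathcal{N}(S^{sh})\to H^1_{et}(S^{sh},\cll(e_K)).$$
I will argue that the $H^1$ term vanishes: $\cll(e_K)$ is \'etale over $S$ with trivial generic fibre, so over $S^{sh}$ it decomposes as a disjoint union of copies of $S^{sh}$ indexed by the abelian group $E(\X\times_S S^{sh})$ of generically trivial line bundles, and as an \'etale sheaf it is constant; since $S^{sh}$ is strictly henselian, the positive \'etale cohomology of such a sheaf vanishes. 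The smooth locus of $\X\times_S S^{sh}\to S^{sh}$ admits a section by Hensel's lemma, so rigidification (Lemma \ref{rigidif}) identifies $\Pic_{\X/S}(S^{sh})=\Pic(\X\times_S S^{sh})$ and $\mathcal{N}(K^{sh})=\Pic(\X_{K^{sh}})$ (using also $\cll(e_K)(K^{sh})=\{e\}$ since the generic fibre of $\cll(e_K)$ is trivial). With these identifications, surjectivity of $\mathcal{N}(S^{sh})\to \mathcal{N}(K^{sh})$ is equivalent to semi-factoriality of $\X\times_S S^{sh}\to S^{sh}$.

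For the ``if'' direction, circuit-coprimeness of $(\Gamma,l)$ yields via Theorem \ref{mainthm}~i) semi-factoriality of $\X\times_S (S')^{sh}\to (S')^{sh}$ for every essentially smooth local $S'/S$, hence by the dictionary surjectivity of $\mathcal{N}((S')^{sh})\to \mathcal{N}((K')^{sh})$. I then descend from $(S')^{sh}$ to $S'$: any lift $\tilde\alpha$ of a given $\alpha\in\mathcal{N}(K')$ to $\mathcal{N}((S')^{sh})$ is unique because $\mathcal{N}$ is separated and $(K')^{sh}$ is dense in $(S')^{sh}$; since $\mathcal{N}$ is locally of finite presentation, the lift factors through a finite \'etale intermediate cover $S''\to S'$, and the uniqueness automatically supplies the descent datum on $S''\times_{S'}S''$, so fpqc descent produces a section on $S'$. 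For the ``only if'' direction, if $\mathcal{N}$ is a N\'eron lft-model, Proposition \ref{weakNM} applied to the essentially \'etale extension $S^{sh}/S$ gives surjectivity of $\mathcal{N}(S^{sh})\to \mathcal{N}(K^{sh})$, which by the dictionary translates to semi-factoriality of $\X\times_S S^{sh}\to S^{sh}$; Theorem \ref{mainthm}~ii) then forces $(\Gamma,l)$ to be circuit-coprime. The main technical obstacle is establishing the dictionary---specifically the vanishing of $H^1_{et}(S^{sh},\cll(e_K))$ despite $\cll(e_K)$ being infinite as a scheme (indexed by the potentially unbounded group of generically trivial line bundles)---together with the fpqc descent step in the ``if'' direction, which requires combining the separatedness of $\mathcal{N}$ with a finite \'etale reduction to make the pro-\'etale cover $(S')^{sh}\to S'$ amenable to standard descent.
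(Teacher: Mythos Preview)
Your overall strategy matches the paper's: reduce the N\'eron property to semi-factoriality over strictly henselian bases via Theorem~\ref{mainthm}, using that essentially smooth local extensions preserve the labelled graph, and then descend. The descent argument you sketch (uniqueness of the lift by separatedness of $\mathcal{N}$, spreading out to a finite \'etale cover, then fpqc descent of sections) is essentially equivalent to the paper's, which instead first proves the full N\'eron mapping property over $S^{sh}$ and then descends morphisms from arbitrary smooth $T$ using schematic density of $T_K$ in $T$.

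There is, however, a genuine error in your ``dictionary'' step. You claim that over $S^{sh}$ the group scheme $\cll(e_K)$ is a disjoint union of copies of $S^{sh}$ indexed by $E(\X\times_S S^{sh})$, hence a constant sheaf. This cannot be right: the generic fibre of $\cll(e_K)$ is by construction the single point $\{e_K\}$, so a disjoint union of more than one copy of $S^{sh}$ is impossible. What actually happens is that $\Pic_{\X/S}$ is \emph{not separated} over $S$ (precisely because generically trivial line bundles give multiple limits of $e_K$), so $\cll(e_K)_{S^{sh}}$ looks like $S^{sh}$ with its closed point replaced by the discrete set $E(\X_{S^{sh}})$ --- a non-separated \'etale $S^{sh}$-scheme. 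In particular it is not a constant sheaf: its sections over $\Spec K^{sh}$ form the trivial group while its sections over $S^{sh}$ form $E(\X_{S^{sh}})$.

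The conclusion you want, namely that $\Pic_{\X/S}(S^{sh})\to\mathcal{N}(S^{sh})$ is surjective, is nonetheless correct. The clean argument (which is the paper's) is simply that the quotient map $\Pic_{\X/S}\to\mathcal{N}$ is \'etale and surjective by Lemma~\ref{Pic-N_etale}, and an \'etale surjection induces a surjection on $S^{sh}$-points because any $S^{sh}$-point of $\mathcal{N}$ pulls back to an \'etale $S^{sh}$-scheme with nonempty closed fibre, which over a strictly henselian base admits a section. Equivalently, in your cohomological language, an \'etale torsor over a strictly henselian base is trivial regardless of whether the structure group is constant. Replace your justification with this and the argument goes through.
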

\begin{proof}
Let $S^{sh}\ra S$ be a strict henselization of $S$ with respect to some algebraic closure of the residue field, and denote by $K^{sh}$ its fraction field. If $(\Gamma,l)$ is not circuit-coprime, the map 
$$\Pic(\X_{S^{sh}})\rightarrow \Pic(\X_{K^{sh}})$$
is not surjective, by Theorem \ref{mainthm}. Now, as the special fibre of $\X_{S^{sh}}/S^{sh}$ is generically smooth, $\X_{S^{sh}}\ra S^{sh}$ admits a section; hence, we can apply Lemma \ref{rigidif} and find that 
$$\Pic_{\X/S}(S^{sh})\ra \Pic_{\X_K/K}(K^{sh})$$
is not surjective. As the quotient $\Pic_{\X/S}\ra \mathcal N$ is an \'etale surjective morphism of $S^{sh}$-algebraic spaces (Lemma \ref{Pic-N_etale}), the map $\Pic_{\X/S}(S^{sh})\ra \mathcal N(S^{sh})$ is surjective.  We deduce that $\mathcal N(S^{sh})\ra \Pic_{\X_K/K}(K^{sh})$
is not surjective.  Then for some \'etale extension of discrete valuation rings $S'\ra S$, $\mathcal N(S')\ra \Pic_{\X_K/K}(K')$ is not surjective, hence $\mathcal N$ is not a N\'eron model of $\Pic_{\X_K/K}$.

Now assume that $(\Gamma,l)$ is circuit coprime. Assume first that $S$ is strictly henselian. 
By Prop. \ref{weakNM} it is enough to prove that for all essentially smooth local extensions $R\ra R'$ of discrete valuation rings, the map 
$$\mathcal N(R')\ra \Pic_{\X_K/K}(K')$$
is surjective. As $\X\ra S$ admits a section, we may apply Lemma \ref{rigidif} and just show that $\Pic(\X_{R'})\rightarrow \Pic(\X_{K'})$ is surjective. The map $R\ra R'$ has ramification index $1$, i.e. it sends a uniformizer to a uniformizer. Therefore the labelled graph $(\Gamma',l')$ associated to $\X_{R'}$ is again circuit-coprime, and in fact $(\Gamma',l')=(\Gamma,l)$. Now we conclude by Theorem \ref{mainthm}.

Now let $\X\ra S$ be any nodal curve with circuit-coprime labelled graph. Let $p\colon S'\ra S$ be a strict henselization of $S$. Consider the smooth separated $S$-group scheme $\mathcal N=\Pic_{\X/S}/\cll(e_K)$. As taking the schematic closure commutes with flat base change, $p^*\mathcal N$ is canonically isomorphic to $\Pic_{\X'/S'}/\cll(e_{K'})$, hence is a N\'eron lft-model for $\Pic_{\X_{K'}/K'}$ over $S'$. We show that $\mathcal N$ is a N\'eron lft-model of its generic fibre. Let $T\ra S$ be a smooth $S$-scheme, $f\colon T_K\ra \Pic_{\X_K/K}$ a $K$-morphism. The base change $p^*f\colon T_{K'}\ra \Pic_{\X_{K'}/K'}$ extends uniquely to an $S'$-morphism $g\colon p^*T\ra \mathcal N'$. Let $S'':=S'\times_SS'$, $p_1,p_2\colon S''\ra S'$ the two projections, and $q\colon S''\ra S$ the composition. The two maps $p_1^*g, p_2^*g\colon q^*T\ra q^*\mathcal N$ both coincide with $q^*f$ when restricted to $q^*T_K$. As $q^*T\ra S$ is flat, $q^*T_K$ is schematically dense in $q^*T$. Since moreover $q^*\mathcal N$ is separated, we have that $p_1^*g=p_2^*g$. Hence $g$ descends to a morphism $T\ra \mathcal N$ extending $f$. Again, the extension is unique because $\mathcal N\ra S$ is separated and $T_K$ is schematically dense in $T$.  


\end{proof}

\begin{corollary}
Let $\X\ra S$ be a nodal curve over a trait. Let $\pi\colon \widetilde{\X}\ra \X$ be the blowing-up of $\X$ at the finite union of closed points $\X^{nreg}\cap \X_k$. Then 
$\mathcal N=\Pic_{\widetilde{\X}/S}/\cll(e_K)$ is a N\'eron lft-model for $\Pic_{\X_K/K}$ over $S$.
\end{corollary}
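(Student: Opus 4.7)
The plan is to deduce this as a direct consequence of Theorem \ref{mainneron} applied to the curve $\widetilde{\X}\ra S$ rather than $\X\ra S$. Two facts need to be verified in order to invoke that theorem: (a) the generic fibre of $\widetilde{\X}\ra S$ is identified with $\X_K$, so that $\Pic_{\widetilde{\X}_K/K}\cong\Pic_{\X_K/K}$; and (b) the labelled graph $(\widetilde{\Gamma},\widetilde l)$ of $\widetilde{\X}\ra S$ is circuit-coprime.

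Point (a) is immediate: by construction, $\pi\colon\widetilde{\X}\ra \X$ is a blowing-up centered at a closed subscheme of the special fibre, hence an isomorphism over the generic point of $S$. For point (b), I would reduce to the split case by base change along a strict henselization $S^{sh}\ra S$ (with respect to some algebraic closure of the residue field of $S$). Since blowing-up commutes with flat base change, $\widetilde{\X}\times_SS^{sh}$ is canonically the blowing-up of $\X\times_SS^{sh}$ at the non-regular locus of its special fibre (the non-regular locus is preserved under essentially \'etale base change on an $S$-flat scheme, hence so is its intersection with the special fibre). The labelled graph $(\widetilde{\Gamma},\widetilde l)$ of $\widetilde{\X}$ is by definition the labelled graph of this strict-henselian base change; by Lemma \ref{comparison}(iii), or equivalently by the explicit description in subsection \ref{blup}, this is precisely the first-blow-up graph (Definition \ref{blowupgraph}) of the labelled graph $(\Gamma,l)$ of $\X/S$.

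The key combinatorial observation, already used in the proof of Corollary \ref{blowuppp}, is that any first-blow-up graph is circuit-coprime: by construction every edge of $(\widetilde{\Gamma},\widetilde l)$ carrying a label $\neq 1$ is adjacent to two edges labelled $1$, so every circuit of $(\widetilde{\Gamma},\widetilde l)$ (and likewise of the contracted graph obtained by collapsing the $\infty$-edges) contains an edge of label $1$ and thus has gcd equal to $1$. Hence $(\widetilde{\Gamma},\widetilde l)$ is circuit-coprime in the sense of Definition \ref{circuitcoprimeinfty}.

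Combining these observations with Theorem \ref{mainneron} applied to $\widetilde{\X}\ra S$ yields that $\Pic_{\widetilde{\X}/S}/\cll(e_K)$ is a N\'eron lft-model for $\Pic_{\widetilde{\X}_K/K}=\Pic_{\X_K/K}$ over $S$, as desired. I expect no substantial obstacle: the only mild subtlety is checking that the labelled graph of $\widetilde{\X}$ (defined via strict henselization) really equals the first-blow-up of $(\Gamma,l)$, which is handled by the compatibility of blowing-up with flat base change together with the explicit local description of blowing-up a node given in Lemma \ref{nodalcurves} and subsection \ref{blup}.
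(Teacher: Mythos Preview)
Your proposal is correct and follows essentially the same approach as the paper: reduce to the split case by strict henselization (using that blowing-up commutes with flat base change), identify the labelled graph of $\widetilde{\X}$ as the first-blow-up graph of $(\Gamma,l)$, observe that this is circuit-coprime exactly as in Corollary~\ref{blowuppp}, and apply Theorem~\ref{mainneron}. Your write-up is in fact slightly more detailed than the paper's, explicitly spelling out the generic-fibre identification and the compatibility of the non-regular locus with \'etale base change.
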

\begin{proof}
It is enough to check that the labelled graph $(\til \Gamma, \til l)$ of $\til \X\ra S$ is circuit-coprime, by the previous Theorem. As labelled graphs are preserved under \'etale extensions of the base trait, we may assume that $\X\ra S$ has special fibre with split singularities. Then the same argument as in the proof of Corollary \ref{blowuppp} shows that $(\til \Gamma, \til l)$ is circuit-coprime.
\end{proof}

\begin{corollary}
Let $\X\ra S$ be a nodal curve over a trait. Let $\overline k$ be a separable closure of the residue field of $S$ and suppose that the graph of $\X_{\overline k}$ is a tree. Then 
$\mathcal N=\Pic_{\X/S}/\cll(e_K)$ is a N\'eron lft-model for $\Pic_{\X_K/K}$ over $S$.
\end{corollary}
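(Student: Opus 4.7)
The plan is to reduce the statement to Theorem \ref{mainneron}, which asserts that $\mathcal N = \Pic_{\X/S}/\cll(e_K)$ is a N\'eron lft-model if and only if the labelled graph $(\Gamma,l)$ of $\X\ra S$ is circuit-coprime. Hence it suffices to establish the combinatorial condition on $(\Gamma,l)$ from the tree hypothesis.

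First, I would identify the underlying graph $\Gamma$ (defined via the strict henselization $S^{sh}$ taken with respect to an algebraic closure $k^{\mathrm{alg}}$ of the residue field) with the dual graph of $\X_{\overline k}$ supplied by the hypothesis. Two points to verify: over any separably closed field all nodal singularities are automatically split (the two branches through a node correspond to an \'etale cover of the node, and any finite \'etale $\overline k$-algebra splits), so the dual graph of $\X_{\overline k}$ is well-defined; and the extension $\overline k\hookrightarrow k^{\mathrm{alg}}$ is purely inseparable, hence base change along it is a universal homeomorphism and preserves the combinatorial data of irreducible components of the special fibre together with the nodes between them. Consequently the dual graph of $\X_{k^{\mathrm{alg}}}$ coincides with that of $\X_{\overline k}$, and by hypothesis $\Gamma$ is a tree.

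Next, I would verify that any $\N_{\infty}$-labelled tree is automatically circuit-coprime in the sense of Definition \ref{circuitcoprimeinfty}. The graph $(\Gamma_c,l_c)$ obtained from $(\Gamma,l)$ by contracting all edges labelled $\infty$ is again a tree, since edge-contraction preserves the tree property. A tree contains no circuits, so the gcd condition of Definition \ref{defcc} is vacuously satisfied, and $(\Gamma,l)$ is circuit-coprime.

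Theorem \ref{mainneron} then applies directly to yield the conclusion. No genuine obstacle arises: the corollary is essentially immediate once the combinatorial definitions have been unpacked, the only mild subtlety being the invariance of the dual graph under the purely inseparable extension $\overline k\hookrightarrow k^{\mathrm{alg}}$, which is harmless.
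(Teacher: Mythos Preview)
Your proposal is correct and follows exactly the route the paper intends (the paper leaves this corollary without proof, the implicit argument being that a tree has no circuits, hence $(\Gamma,l)$ is vacuously circuit-coprime, so Theorem~\ref{mainneron} applies). One minor remark: the residue field of the strict henselization $S^{sh}$ is already the separable closure $\overline k$, not $k^{\mathrm{alg}}$, so your detour through the purely inseparable extension $\overline k \hookrightarrow k^{\mathrm{alg}}$ is unnecessary.
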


We have shown how to construct N\'eron lft-models for the group scheme $\Pic_{\X_K/K}$, without ever imposing bounds on the degree of line bundles; the following lemma allows us to retrieve lft-N\'eron models for subgroup schemes of $\Pic_{\X_K/K}$, and applies in particular to subgroup schemes that are open and closed, such as the connected component of the identity $\Pic^{[0]}_{\X_K/K}$. 

\begin{lemma}\label{Pic0}
Let $\X/S$ be a nodal curve over a trait, and $H\subset \Pic_{\X_K/K}$ a $K$-smooth closed subgroup scheme of $\Pic_{\X_K/K}$. Let $\mathcal N\ra S$ be the N\'eron model of $\Pic_{\X_K/K}$. Then $H$ admits a N\'eron lft-model $\mathcal H$ over $S$, which is obtained as a group smoothening of the schematic closure of $H$ inside $\mathcal N$.
\end{lemma}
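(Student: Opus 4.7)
The plan is to construct $\mathcal H$ in two stages and then verify the N\'eron mapping property. First, let $\overline{H}\subset\mathcal N$ be the scheme-theoretic closure of $H\subset\Pic_{\X_K/K}=\mathcal N_K$. Then $\overline H$ is flat over $S$, closed in $\mathcal N$, and has generic fibre $H$. Because $\overline H\times_S\overline H$ is $S$-flat with generic fibre $H\times_K H$, and the latter is mapped into $H$ by the multiplication of $\mathcal N$, the universal property of scheme-theoretic closure implies that the group operations of $\mathcal N$ restrict to $\overline H$; hence $\overline H$ is a closed, separated $S$-subgroup scheme of $\mathcal N$ with smooth generic fibre. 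I would then take $\mathcal H\to\overline H$ to be a group smoothening in the sense of \citep{BLR}, Chapter 7, obtaining a smooth separated $S$-group scheme $\mathcal H$ equipped with an $S$-group morphism $\mathcal H\to\overline H$ which is an isomorphism on generic fibres.

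To verify the universal property, let $T\to S$ be smooth and $f\colon T_K\to H$ a $K$-morphism. Composing with the inclusion $H\hookrightarrow\Pic_{\X_K/K}$ and using that $\mathcal N$ is a N\'eron lft-model of $\Pic_{\X_K/K}$, one obtains a unique $S$-extension $F\colon T\to\mathcal N$ with $F_K=f$. Since $T/S$ is smooth, hence flat, the generic fibre $T_K$ is scheme-theoretically dense in $T$; since $F_K$ factors through $H$ and $\overline H$ is by definition the scheme-theoretic closure of $H$ in $\mathcal N$, the map $F$ factors uniquely through $\overline H$. The defining universal property of the group smoothening then yields a unique lift $T\to\mathcal H$ extending $f$. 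Uniqueness at every stage follows from the separatedness of the targets together with the schematic density of $T_K$ in $T$.

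The main technical point is the application of the group smoothening in the locally-of-finite-type setting, since the construction of \citep{BLR}, Chapter 7 is formulated for flat group schemes of finite type. To handle this, I would work separately on each quasi-compact open-and-closed piece of $\overline H$ (the group of connected components of $\overline H$ is \'etale over $S$ because $H$ is smooth and $\mathcal N\to S$ is separated, so $\overline H$ decomposes into quasi-compact clopen pieces indexed by components of the special fibre) and then reassemble the resulting finite-type smoothenings; the universal property of the group smoothening is preserved under this disjoint union, and the compatibility of the group law with the smoothening on each piece yields the $S$-group structure on the total $\mathcal H$. Once this formal step is carried out, the remainder of the proof is a direct consequence of the universal properties already invoked.
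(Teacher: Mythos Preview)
Your argument is essentially correct and in fact reproduces the content of the reference the paper invokes: the paper's own proof consists of the single line ``This is a special case of \citep{BLR}, 10.1/4,'' and what you have written is a faithful unpacking of that result (schematic closure inside an existing N\'eron lft-model, followed by group smoothening, followed by verification of the extension property via schematic density).

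One point in your write-up deserves tightening. In your last paragraph you justify the passage to the lft setting by asserting that the component group of $\overline H$ is \'etale over $S$ ``because $H$ is smooth and $\mathcal N\to S$ is separated,'' and hence that $\overline H$ breaks into quasi-compact clopen pieces. This inference is not quite right: $\overline H$ is flat but generally not smooth, so there is no a priori reason its component scheme is \'etale, nor that such \'etaleness would yield the desired clopen decomposition over a non-henselian base. The standard way around this (and the route taken in \citep{BLR}, 7.1 and 10.1) is to observe that the identity component $\overline H^0$ is of finite type over $S$, perform the smoothening there to obtain the identity component of $\mathcal H$, and then recover the remaining components as translates; alternatively one can invoke directly the lft version of the smoothening process given in \citep{BLR}, 10.1/4. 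Once this point is repaired, your proof is complete and matches the cited reference.
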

\begin{proof}
This is a special case of \citep{BLR}, 10.1/4.

\end{proof}

We remark that, if the generic fibre $\X_K/K$ is not smooth, $\Pic^{[0]}_{\X_K/K}$ is an extension of an abelian variety by a torus; if the torus contains a copy of $\mathbb G_{m,K}$, the N\'eron lft-model of $\Pic^{[0]}_{\X_K/K}$ is not quasi-compact.

\nocite{*}
\bibliographystyle{plain}

\end{document}